\newcommand{\dbar}{\overline{\partial}}
\newcommand{\ddt}[1]{\frac{\partial #1}{\partial t}}
\newcommand{\R}{\mathcal{R}}
\newcommand{\ddbar}{\sqrt{-1}\partial\dbar}
\newtheorem{theorem}{Theorem}[section]
\newtheorem{proposition}{Proposition}[section]
\newtheorem{lemma}{Lemma}[section]
\newtheorem{definition}{Definition}[section]
\newtheorem{corollary}{Corollary}[section]
\begin{document}

\address{Department of Mathematics, Rutgers University, Piscataway, NJ 08854}

\email{jiansong@math.rutgers.edu}

\thanks{Research supported in
part by National Science Foundation grants DMS-0847524.}

\centerline{ {\bf \large RIEMANNIAN GEOMETRY  OF K\"AHLER-EINSTEIN CURRENTS II}   \footnote{Research supported in part by National Science Foundation
grant  DMS-1406124} }

\bigskip

\centerline{  \small AN ANALYTIC PROOF OF KAWAMATA'S BASE POINT FREE THEOREM }

\bigskip
\bigskip

\centerline{ \small  JIAN SONG }

\bigskip

\bigskip

{\noindent \small A{\scriptsize BSTRACT}. $~$~~It is proved by Kawamata that the canonical bundle of a projective manifold is semi-ample if it is big and nef. We give an analytic proof using the Ricci flow, degeneration of Riemannian manifolds and  $L^2$-theory. Combined with our earlier results, we construct unique singular  K\"ahler-Einstein metrics with a  global Riemannian structure on canonical models. Our approach can be viewed as the Kodaira embedding theorem on singular metric spaces with canonical K\"ahler metrics. }

{\footnotesize \tableofcontents}

\section{Introduction}

This is a sequel to our earlier work \cite{S3}. A well-known theorem of Kawamata \cite{K1, K2, KMM, KM} states that if the canonical bundle $K_X$ of a projective manifold $X$ is big and nef, then it must be semi-ample, i.e., the linear system $|mK_X|$ is base point free for some sufficiently large $m\in \mathbb{Z}$. This is a very important result and has many deep generalizations and applications in the minimal model program. In particular, it implies the finite generation of the canonical ring  and the abundance conjecture for minimal models of general type.

Recent progress in K\"ahler geometry has revealed deep connections and interplay among nonlinear PDEs, Riemannian geometry and complex algebraic geometry. The solution to the Yau-Tian-Donaldson conjecture \cite{Y2, T1, T2, D, DS, CDS1, CDS2, CDS3, T4} has established the relation between the existence of K\"ahler-Einstein metrics and the $K$-stability for Fano manifolds, using the theory for degeneration of Riemannian K\"ahler manifolds by Cheeger-Colding \cite{CC1, CC2} and Cheeger-Colding-Tian \cite{CCT}, and Hormander's $L^2$-estimates on singular metric spaces to establish the partial $C^0$-estimate proposed by Tian \cite{T1}. The analytic minimal model program with Ricci flow proposed by the author and  Tian \cite{ST1, ST2, ST3} connects finite time singularity  of the K\"ahler-Ricci flow to geometric and birational surgeries,  and its  long time behavior to the existence of singular K\"ahler-Einstein metrics and the abundance conjecture. There have been many results in this direction \cite{SW1, SW2, SW3, SoYu, S3}.  It is further proposed by the author \cite{S3} that the K\"ahler-Ricci flow should give a global uniformization in terms of K\"ahler-Einstein metrics for projective varieties as well as a local uniformization in terms of the transition of shrinking and expanding solitons for singularities arising simultaneously from the K\"ahler-Ricci flow and birational transformation \cite{S1, S2}.

The singular K\"ahler-Einstein metrics on projective manifolds of general type are first constructed by Tsuji \cite{Ts} by the K\"ahler-Ricci flow. This is generalized for projective varieties with log terminal singularities of general type and projective Calabi-Yau varieties with log terminal singularities  by Eyssidieux-Guedj-Zeriahi \cite{EGZ} (see also \cite{Z}) based on the fundamental work of Kolodziej \cite{Kol1} in the study of complex Monge-Ampere equations with singular data.  Recently, it is shown by the author \cite{S3} that such K\"ahler-Einstein currents also admit both global and local Riemannian structures on the canonical models of smooth minimal models of general type and projective Calabi-Yau varieties admitting a crepant resolution of singularities.  However, the above results all assume the abundance conjecture for the minimal models of general type.    Our main result is to remove this assumption and to give an analytic and Riemannian geometric proof for the following base point free theorem of Kawamata. 
 
\begin{theorem}\label{main1} Let $X$ be a projective manifold. If  the canonical bundle $K_X$ is big and nef, then it is semi-ample, i.e., $m K_X$ is globally generated for some sufficiently large $m\in \mathbb{Z}^+$. 

\end{theorem}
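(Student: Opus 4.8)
The plan is to produce the singular K\"ahler--Einstein current on $X$ as the long-time limit of the normalized K\"ahler--Ricci flow, to establish a complete Riemannian picture of its metric completion, and then to carry out a Kodaira-type embedding of that (a priori singular) metric space using H\"ormander's $L^2$-estimates. Concretely, I would start the flow $\partial_t\omega = -\ric(\omega)-\omega$ from an arbitrary K\"ahler metric. Since $K_X$ is nef the flow exists for all $t\ge 0$, and since $K_X$ is big and nef one has $\mathbb{B}_-(K_X)=\emptyset$, so that $\Xr := X\setminus \mathbb{B}_+(K_X)$ is a nonempty Zariski-open set on which $K_X$ behaves like an ample bundle. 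As $t\to\infty$ the classes $[\omega_t]$ converge to $c_1(K_X)$ and, by the by-now-standard theory (Tsuji \cite{Ts}, Song--Tian \cite{ST1}), $\omega_t$ converges in $C^\infty_{loc}(\Xr)$ and weakly as positive currents on $X$ to a closed positive current $\omega_{KE}\in c_1(K_X)$ which restricts to a genuine negatively curved K\"ahler--Einstein metric, $\ric(\omega_{KE})=-\omega_{KE}$, on $\Xr$, with locally bounded potential; equivalently $\omega_{KE}$ solves a complex Monge--Amp\`ere equation $\omega_{KE}^n = e^{\varphi}\Omega$ on $\Xr$. The Hermitian metric $h_{KE}$ on $K_X$ induced by $\omega_{KE}$ then has curvature $\omega_{KE}$, so $(K_X,h_{KE})$ is positive on $\Xr$.

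Next I would study the metric completion $(\hXc, d_{KE})$ of $(\Xr,\omega_{KE})$. Using the a priori estimates along the flow together with the arguments of \cite{S3} --- which here must be established \emph{without} assuming semi-ampleness --- I expect to show that $\hXc$ is a compact metric space of finite diameter with an open dense regular set $\R$ isometric to an open subset of $(\Xr,\omega_{KE})$ and a closed singular set $\Sc = \hXc\setminus\R$ of Hausdorff codimension at least two; that it is volume non-collapsed, so that tangent cones exist and are metric cones; and that the natural map $X\to\hXc$ contracting $\mathbb{B}_+(K_X)$ is continuous with connected, $K_X$-trivial fibers. The negative Einstein sign should make the interior regularity on $\Xr$ and the non-collapsing cleaner than in the Fano case (Cheeger--Colding \cite{CC1, CC2}, Song--Wang \cite{SW1}), while the metric behavior along $\mathbb{B}_+(K_X)$ is controlled by Kodaira's lemma (from bigness) together with the uniform scalar-curvature and diameter bounds for this flow.

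With $(K_X, h_{KE})$ positive on $\hXc$, I would then run the $L^2$-theory: applying H\"ormander's $\dbar$-estimate on $(\hXc,\omega_{KE})$, and using the codimension bound on $\Sc$ to cut off near the singular set exactly as in the partial $C^0$-estimate of Donaldson--Sun \cite{DS} and Chen--Donaldson--Sun \cite{CDS1} (and as in \cite{S3}), I would construct, for every $m\gg 1$ and every point $p\in\hXc$, a global section of $mK_X$ peaked at $p$ --- built from approximate holomorphic peak sections on the relevant tangent-cone model and corrected by a small $L^2$ error. Moser iteration together with the curvature positivity then gives uniform $L^\infty$--$L^2$ and gradient bounds, so that for one fixed large $m$ the sections of $mK_X$ have no common zero and separate points and tangent directions on $\hXc$; since they are constant along the $K_X$-trivial fibers of $X\to\hXc$, the rational map $\Phi_{|mK_X|}$ is in fact a morphism on all of $X$. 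Finally, the embedding realizes $\hXc$ as a normal projective variety (Chow's theorem and GAGA), necessarily the canonical model $\mathrm{Proj}\bigoplus_m H^0(X,mK_X)$, and $X\to\hXc\hookrightarrow\PP^N$ is the map of $|mK_X|$; hence $mK_X$ is globally generated, which is Theorem \ref{main1}.

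The main obstacle is the $L^2$-construction of sections near the singular set $\Sc$ and near $\mathbb{B}_+(K_X)$ --- the partial $C^0$-estimate, i.e.\ producing enough global pluricanonical sections with estimates uniform in $m$ to separate all points of the singular metric space. This needs a precise enough description of the tangent cones at points of $\Sc$ --- they should be normal affine algebraic cones reflecting canonical-type singularities of the limit --- together with a gluing-and-correction scheme compatible with the $L^2$-estimates; and it rests on the metric analysis of the completion, since it is the non-collapsing and the codimension bound on $\Sc$ that keep the $\dbar$-corrections small enough for the peak sections to stay peaked. A secondary difficulty is to carry out the entire argument intrinsically on $X$, extracting the algebraic structure of $\hXc$ and the semi-ampleness of $K_X$ only at the end rather than assuming the existence of the canonical model at the outset.
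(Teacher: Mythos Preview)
Your broad strategy is the paper's: produce the singular K\"ahler--Einstein current, pass to a Gromov--Hausdorff limit $(X_\infty,d_\infty)$ of almost-Einstein approximants, and run the Donaldson--Sun $H$-condition machinery to manufacture peak sections on $X_\infty$. Up through local freeness on the limit space (the paper's Proposition~\ref{inject}) your outline and the paper agree.

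The gap is in the final descent from $X_\infty$ back to $X$. You assert that the metric completion is compact and that the natural map $X\to\hXc$ has connected $K_X$-trivial fibers, and then conclude freeness on $X$ because the sections are constant along those fibers. Neither premise is available a priori. The paper explicitly works under $\mathrm{diam}_{g_k}(X)\to\infty$, and the $L^\infty$ bound on the potential (hence compactness of the completion) is obtained only \emph{after} semi-ampleness, as the final corollary of Section~6. The claim that the fibers are $K_X$-trivial is circular: it is precisely what the canonical morphism would hand you once you already know $|mK_X|$ is base-point free, so your sentence ``since they are constant along the $K_X$-trivial fibers \ldots\ the rational map $\Phi_{|mK_X|}$ is in fact a morphism'' assumes the conclusion.

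What the paper actually does is pointwise and is the real content of the theorem. For each fixed $q\in X$ it performs a log resolution followed by a point blow-up over $q$ and proves a second-order estimate $\tilde\omega_{k,\epsilon}\le C\,|\sigma_E|_{h_E}^{-2(1-\nu)}|w|^{-2\lambda}\,\tilde\omega$ with a strict gain $\nu>0$ on the exceptional divisor (Lemma~\ref{digest}); this makes a short arc transversal to $D$ and ending at $q$ have $g_k$-length uniformly bounded in $k$ (Proposition~\ref{arc}), so $q$ acquires a limit $q_\infty\in X_\infty$. Proposition~\ref{inject} then gives $|\sigma|^2_{(h_\infty)^m}(q_\infty)>0$, but since $h_\infty=e^{-\varphi_\infty}h_\chi$ with $\varphi_\infty$ not known to be bounded this does not yet say $\sigma(q)\neq 0$. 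The decisive extra input is the a priori lower bound $\varphi_\infty\ge \epsilon\log|\sigma_D|^2_{h_D}-C_\epsilon$ valid for \emph{every} $\epsilon>0$ (Corollary~\ref{co21}): along the transversal arc this forces $|\sigma|^2_{h_\chi^m}(q_j)\gtrsim |\sigma_D|^{2\epsilon}_{h_D}(q_j)$ for all small $\epsilon$, and comparing vanishing orders of the two holomorphic quantities as $q_j\to q$ rules out $\sigma(q)=0$ (Proposition~\ref{locfre6}). Your proposal does not contain either of these two ingredients.
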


Projective manifolds of big and nef canonical bundle are called minimal models of general type. Immediately, one can conclude the canonical ring of $X$, a smooth minimal model of general type,  is finitely generated and $X$ admits a unique canonical model $X_{can}$ birationally equivalent to $X$,  from the pluricanonical map $\Phi=|mK_X|: X \rightarrow X_{can}$ for sufficiently large $m$.  Applying the result of Theorem 1.2 in \cite{S3}, we have the following Riemannian geometric counterpart of Theorem \ref{main1}. 
\begin{corollary} 
Let $X$ be an $n$-dimensional smooth minimal model of general type. Then there exists a unique smooth K\"ahler-Einstein metric $g_{KE}$ on $X_{can}^\circ$, the smooth part of the canonical model $X_{can}$ for $X$,  satisfying
\begin{enumerate}

\medskip

\item $g_{KE}$ extends uniquely to a K\"ahler current $\omega_{KE} \in c_1(X_{can})$ on $X_{can}$ with bounded local potentials,

\medskip

\item the metric completion of $(X_{can}^\circ, g_{KE})$ is a compact metric length space $(X_\infty, d_\infty)$ homeomorphic to the projective variety $X_{can}$ itself, 

\medskip

\item the singular set $\mathcal{S}$ of $(X_\infty, d_\infty)$ has Hausdorff dimension no great than $n-4$. In particular,   $(X_\infty \setminus \mathcal{S}, d_\infty)$ is convex in $(X_\infty, d_\infty)$ and it is isomorphic to $(X_{can}^\circ, g_{KE})$. 

\end{enumerate}

\end{corollary}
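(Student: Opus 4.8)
The plan is to deduce the corollary by feeding the output of Theorem~\ref{main1} into Theorem~1.2 of \cite{S3}, whose hypotheses are precisely the existence of a canonical model together with the structural analytic input that Theorem~\ref{main1} now unconditionally provides. First I would invoke Theorem~\ref{main1} to conclude that $K_X$ is semi-ample, so the pluricanonical system $|mK_X|$ for $m$ sufficiently large and divisible is base point free and defines a morphism $\Phi = \Phi_{|mK_X|}\colon X \to X_{can}$ onto the canonical model, which is a normal projective variety with at worst canonical (log terminal) singularities and with $K_{X_{can}}$ ample. This removes the abundance-type assumption that was the only obstruction to applying the machinery of \cite{S3}: the canonical ring $R(X, K_X) = \bigoplus_{m \ge 0} H^0(X, mK_X)$ is finitely generated, $X_{can} = \mathrm{Proj}\, R(X, K_X)$, and the birational morphism $\Phi$ exists.

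Next I would run the (normalized) K\"ahler--Ricci flow on $X$. By the results of \cite{ST1, ST2, Ts, EGZ} and the long-time existence theory, the flow exists for all time and, after pulling back an ample class from $X_{can}$ along $\Phi$, the evolving metrics converge in a suitable weak sense to the pullback $\Phi^*\omega_{KE}$ of a canonical K\"ahler--Einstein current $\omega_{KE} \in c_1(X_{can})$; on $X_{can}^\circ := X_{can}\setminus \mathrm{Sing}(X_{can}) \cup \Phi(\mathrm{Exc}(\Phi))$ this current restricts to a genuine smooth K\"ahler--Einstein metric $g_{KE}$ solving $\ric(g_{KE}) = -g_{KE}$, with bounded local potentials by Kolodziej's estimate \cite{Kol1} as in \cite{EGZ}. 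Uniqueness of $g_{KE}$ (hence of $\omega_{KE}$) follows from the uniqueness of bounded-potential solutions to the associated complex Monge--Amp\`ere equation on $X_{can}$. This establishes part~(1). For parts~(2) and~(3), I would apply verbatim Theorem~1.2 of \cite{S3}: that theorem takes as input exactly the data now in hand — a smooth minimal model of general type whose canonical ring is finitely generated, with canonical model $X_{can}$ and K\"ahler--Einstein current $\omega_{KE}$ constructed via the Ricci flow — and produces the metric completion $(X_\infty, d_\infty)$ of $(X_{can}^\circ, g_{KE})$ as a compact length space homeomorphic to $X_{can}$, with singular set $\mathcal{S}$ of Hausdorff codimension at least $4$, convexity of the regular part, and the isomorphism $(X_\infty\setminus\mathcal{S}, d_\infty) \cong (X_{can}^\circ, g_{KE})$.

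The only genuine content of this corollary, beyond citation, is checking that the hypotheses of \cite{S3}, Theorem~1.2 — which were previously conditional on abundance for minimal models of general type — are now met unconditionally. The main point to verify carefully is that the abundance assumption in \cite{S3} enters \emph{only} through the assertion that $K_X$ is semi-ample (equivalently, that $X_{can}$ exists as a projective variety with $K_{X_{can}}$ ample and $\Phi$ well-defined), and not through any finer structural hypothesis on the singularities of $X_{can}$ or on the behavior of the Ricci flow; this is indeed the case, since for a smooth minimal model of general type the canonical singularities of $X_{can}$ and the flow estimates are automatic once semi-ampleness is known. I expect this bookkeeping — matching the precise hypotheses of Theorem~1.2 of \cite{S3} against the conclusion of Theorem~\ref{main1} — to be the only real step; everything else is a direct quotation of \cite{S3}, \cite{EGZ}, \cite{Kol1}, and the long-time Ricci flow theory of \cite{ST1, ST2}.
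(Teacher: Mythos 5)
Your proposal is correct and follows exactly the route the paper takes: the corollary is deduced by using Theorem \ref{main1} to obtain semi-ampleness of $K_X$, hence the pluricanonical morphism $\Phi=|mK_X|:X\to X_{can}$ and finite generation of the canonical ring, and then quoting Theorem~1.2 of \cite{S3} verbatim for the K\"ahler-Einstein current and the structure of the metric completion. The additional detail you supply (the construction of $\omega_{KE}$ via \cite{Ts, EGZ, Kol1, ST1, ST2} and the check that abundance entered \cite{S3} only through semi-ampleness) is consistent with, and merely an expansion of, the paper's one-line justification.
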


Our method is based on the scheme developed in our earlier work \cite{S3}. We apply estimates from the K\"ahler-Ricci flow, pluripotential theory for degenerate complex Monge-Ampere equations \cite{Kol1, EGZ}, the theory for degeneration of Riemannian manifolds, Hormander's $L^2$-theory and a variation of Tian's partial $C^0$-estimates using the $H$-condition of Donaldson-Sun \cite{DS} (see also \cite{T3} for a different approach).  In the course of proving Theorem \ref{main1}, we also achieve the $L^\infty$-estimate for local potential of the K\"ahler-Einstein current on the minimal manifold of general type without assuming Kawamata's theorem.

The proof for Theorem \ref{main1} can  be easily adapted to prove the following theorem also due to Kawamata \cite{K1, K2}.

\begin{theorem}\label{main2} Let $X$ be a projective manifold with $c_1(X)=0$. If a holomorphic line bundle $L$ over $X$ is big and nef, then it is semi-ample.  

\end{theorem}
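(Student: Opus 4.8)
The plan is to mirror the argument for Theorem \ref{main1}, replacing the canonical bundle by the big and nef line bundle $L$ over a Calabi--Yau manifold $X$. Since $c_1(X) = 0$, fix a Ricci-flat K\"ahler metric $\omega_{CY}$ in some K\"ahler class, and let $\chi$ be a smooth closed $(1,1)$-form representing $c_1(L)$. Because $L$ is nef, for every $\epsilon > 0$ the class $c_1(L) + \epsilon[\omega_{CY}]$ is K\"ahler; because $L$ is big, the non-K\"ahler locus (null locus) of $c_1(L)$ is a proper analytic subvariety $Z \subset X$, and on $X \setminus Z$ the class $c_1(L)$ carries a closed positive current with bounded local potentials. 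The first step is to solve the degenerate complex Monge--Amp\`ere equation $(\chi + \ddbar \varphi)^n = c\, \omega_{CY}^n$ on $X$ with $\chi + \ddbar\varphi \geq 0$, using the pluripotential theory of \cite{Kol1, EGZ}; this produces a canonical singular K\"ahler metric $\omega_L = \chi + \ddbar\varphi$ with $\varphi \in L^\infty(X)$ which is smooth and Ricci-flat away from $Z$. In place of the K\"ahler--Ricci flow deformation towards $K_X$, here one runs instead the flow (or continuity path) $\omega(t) = \chi + t\,\omega_{CY} + \ddbar\varphi_t$ whose limit as $t \to 0$ is $\omega_L$, giving uniform estimates on compact subsets of $X \setminus Z$.

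The second step is the geometric convergence and structure of the limit space: one shows that the metric completion of $(X \setminus Z, \omega_L)$ is a compact metric length space $(X_\infty, d_\infty)$ whose regular set is dense, with singular set of small Hausdorff dimension, exactly as in the analysis underlying the Corollary. The estimates from Cheeger--Colding--Tian theory apply because the approximating metrics $\omega(t)$ have uniformly bounded Ricci curvature (in fact Ricci-flat in the interior) and non-collapsed volume on the relevant regions. The key point particular to this setting is that the $L^\infty$-bound on $\varphi$ together with the big-and-nef hypothesis forces the volume form $\omega_L^n$ to be comparable to $\omega_{CY}^n$, so no volume collapse occurs along $L$.

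The third and decisive step is the partial $C^0$-estimate for high powers $mL$. One produces, via H\"ormander's $L^2$-estimates on $(X_\infty, d_\infty)$ applied to the singular Hermitian metric on $L$ determined by $\omega_L$, a large family of holomorphic sections of $mL$ that separate points and tangent directions on the regular part, and then one extends these sections across the singular set and across $Z$ using the $L^\infty$-control and the $H$-condition of Donaldson--Sun \cite{DS} together with Tian's partial $C^0$-estimate technique. This shows $mL$ is globally generated for $m$ large, i.e.\ $L$ is semi-ample. I expect the main obstacle to be the same as in Theorem \ref{main1}: controlling the behavior of the sections and the metric near the null locus $Z$ of $L$ (where positivity degenerates) and establishing that the $L^2$-sections constructed on the abstract limit $(X_\infty, d_\infty)$ descend to genuine algebraic sections on $X$ with uniform estimates independent of $m$; the Calabi--Yau condition $c_1(X) = 0$ simplifies the curvature side of the argument relative to the general-type case but does not remove this difficulty.
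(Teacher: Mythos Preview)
Your overall strategy---a continuity family of Ricci-flat K\"ahler metrics $\omega_t$ in $c_1(L)+t[\omega_0]$, Gromov--Hausdorff convergence via Cheeger--Colding theory, and section production through $L^2$-estimates and the Donaldson--Sun $H$-condition---is the same as the paper's sketch in Section~7.1. Two points, however, deserve correction.

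First, your a priori claim that $\varphi\in L^\infty(X)$ from \cite{Kol1,EGZ} is circular: those estimates require the reference form $\chi\in c_1(L)$ to be smooth and \emph{semi-positive}, and producing such a representative is essentially semi-ampleness, the very conclusion you are after. The paper's scheme (for both Theorem~\ref{main1} and Theorem~\ref{main2}) works only with the Tsuji-type bound $\varphi_k\ge\epsilon\log|\sigma_D|^2_{h_D}-C_\epsilon$ (cf.\ Lemma~\ref{lm31}); in the $K_X$ case the $L^\infty$-estimate is obtained only at the very end as a \emph{consequence} of semi-ampleness. You lean on the $L^\infty$ bound exactly at the crux---passing from $|\sigma|^2_{h_\infty^m}(q_\infty)\neq 0$ on the limit space to $\sigma(q)\neq 0$ on $X$---and without it that step requires the transversal-path construction of Proposition~\ref{arc} together with the argument of Proposition~\ref{locfre6}. (Your remark that the $L^\infty$ bound forces $\omega_L^n$ comparable to $\omega_{CY}^n$ is also off: that comparability \emph{is} the Monge--Amp\`ere equation and needs no potential estimate.) Second, you miss the simplification the paper singles out: because the approximating metrics $\omega_k$ are \emph{exactly} Ricci-flat, one has a uniform diameter bound $\mathrm{diam}_{g_k}(X)\le C$, so the Gromov--Hausdorff limit is compact and the pointed-convergence machinery and local $L^2$-estimates on balls $B_\infty(R)$ from Sections~4--5 become unnecessary. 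Your expectation that the obstacle is ``the same as in Theorem~\ref{main1}'' is thus too pessimistic; the Calabi--Yau case is strictly easier on this front.
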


Similarly, we have the following corollary from Theorem  \ref{main2} and Theorem 1.1 in \cite{S3}.

\begin{corollary} 
Let $X$ be an $n$-dimensional projective manifold with $c_1(X)=0$. Then for any big and nef line bundle $L$ on $X$, the linear system $|mL|$ for sufficiently large $m$ gives a birational morphism $\Phi: X \rightarrow Y$ from $X$ to a unique projective variety $Y$ with canonical singularities and $c_1(Y)=0$. 

Furthermore,  there exists a unique smooth Ricci-flat K\"ahler metric $g_{CY}$ on $Y^\circ$, the smooth part of  $Y$,  satisfying
\begin{enumerate}

\medskip

\item $g_{CY}$ extends uniquely to a K\"ahler current $\omega_{CY} \in c_1(
\Phi_*L)$ on $Y$ with bounded local potentials , 

\medskip

\item the metric completion of $(Y^\circ, g_{CY})$ is a compact metric length space $(Y_\infty, d_\infty)$ homeomorphic to the projective variety $Y$ itself, 

\medskip

\item the singular set $\mathcal{S}$ of $(Y_\infty, d_\infty)$ has Hausdorff dimension no great than $n-4$. In particular, $(Y_\infty \setminus \mathcal{S}, d_\infty)$ is convex in $(Y_\infty, d_\infty)$  is isomorphic to $(Y^\circ, g_{CY})$. 

\end{enumerate}

\end{corollary}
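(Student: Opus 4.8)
The plan is to deduce this corollary from Theorem~\ref{main2} together with Theorem~1.1 of \cite{S3}, following verbatim the logic that produces the first corollary from Theorem~\ref{main1} and Theorem~1.2 of \cite{S3}. First, given a big and nef line bundle $L$ on a projective manifold $X$ with $c_1(X)=0$, Theorem~\ref{main2} asserts that $L$ is semi-ample, so $mL$ is globally generated for some large $m\in\mathbb{Z}^+$ and the associated map $\Phi=|mL|:X\to\mathbb{P}^N$ factors through its Stein factorization $\Phi:X\to Y$, where $Y$ is the normal projective variety $\mathrm{Proj}\bigoplus_{k\ge 0}H^0(X,kmL)$. Since $L$ is big, $\Phi$ is birational; since $\Phi$ contracts only curves $C$ with $L\cdot C=0$ and $K_X$ is numerically trivial, the discrepancies of $\Phi$ are nonnegative, so $Y$ has at worst canonical singularities, and $K_Y=\Phi_*K_X$ is numerically trivial, i.e. $c_1(Y)=0$; moreover $\Phi_*L$ is an ample $\mathbb{Q}$-line bundle on $Y$ with $\Phi^*(\Phi_*L)=L$. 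This establishes the first (pre-enumerate) assertion.

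Next I would invoke Theorem~1.1 of \cite{S3} with input the pair $(Y,\Phi_*L)$: $Y$ is a projective variety with canonical (hence log terminal) singularities and $c_1(Y)=0$, and $\Phi_*L\in \mathrm{Pic}(Y)\otimes\mathbb{Q}$ is ample. That theorem produces the unique smooth Ricci-flat K\"ahler metric $g_{CY}$ on the smooth locus $Y^\circ$ in the class $c_1(\Phi_*L)$, and asserts exactly the three listed properties: the extension of $g_{CY}$ to a K\"ahler current $\omega_{CY}\in c_1(\Phi_*L)$ with bounded local potentials (which on $Y^\circ$ is just the statement that the local K\"ahler potentials of the solution of the relevant degenerate complex Monge--Amp\`ere equation $\omega_{CY}^n=$ (a fixed smooth volume form, since $c_1(Y)=0$) are globally bounded, by the Ko{\l}odziej--Eyssidieux--Guedj--Zeriahi estimates \cite{Kol1, EGZ}); the identification of the metric completion $(Y_\infty,d_\infty)$ with $Y$ as topological spaces; and the bound $\dim_{\mathcal H}\mathcal S\le n-4$ together with the convexity of $Y_\infty\setminus\mathcal S$ and its isometric identification with $(Y^\circ,g_{CY})$. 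Uniqueness of $g_{CY}$ follows from the uniqueness of the normalized solution of the Monge--Amp\`ere equation in a fixed K\"ahler class, and uniqueness of $Y$ and $\Phi$ (up to isomorphism) follows from the canonical nature of the Proj construction.

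The one point requiring a small check is the compatibility of hypotheses: Theorem~1.1 of \cite{S3} is stated for projective Calabi--Yau varieties admitting a crepant resolution, and here the crepant resolution is furnished \emph{for free} by $\Phi:X\to Y$ itself, since $K_X=\Phi^*K_Y$ (both being numerically — indeed, after passing to the canonical model, linearly — trivial) means $\Phi$ is crepant and $X$ is smooth. Thus $X$ plays the role of the crepant resolution required by \cite{S3}, and no new analysis is needed. Because the proof of Theorem~\ref{main2} is obtained (as stated in the excerpt) by an easy adaptation of that of Theorem~\ref{main1}, and the corresponding corollary in \cite{S3} is already available, the main — indeed only — obstacle is Theorem~\ref{main2} itself; once that semi-ampleness statement is in hand, the present corollary is a formal consequence, and I would simply assemble the pieces as above.
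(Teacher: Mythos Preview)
Your proposal is correct and follows exactly the route the paper indicates: the paper states only that the corollary follows ``from Theorem~\ref{main2} and Theorem~1.1 in \cite{S3}'' without further argument, and you have simply unpacked that sentence, supplying the standard checks (semi-ampleness $\Rightarrow$ birational morphism $\Phi$, crepancy of $\Phi$ since $K_X\equiv 0$, hence canonical singularities on $Y$ and $c_1(Y)=0$, so that $\Phi:X\to Y$ itself serves as the crepant resolution required by \cite{S3}). There is nothing to add.
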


Our approach follows the traditional and more constructive  proof for the Kodaira embedding theorem by Hormander's $L^2$-estimates without applying any sophisticated results from algebraic geometry such as the non-vanishing theorem.  The canonical singular K\"ahler-Einstein metric on the minimal model of general type $X$ plays an important role in  both applying the analytic $L^2$-estimates and proving that it coincides with the metric space from the degeneration of the Riemannian almost K\"ahler-Einstein metrics on $X$. Therefore our method can be viewed as the Kodaira embedding theorem on singular metric spaces with canonical Riemmanian K\"ahler metrics. We believe that it can be applied to prove the general base point free theorem of Kawamata for any big and nef divisor on a smooth projective manifold using the K\"ahler-Einstein metric with conical singularities. We also hope that our approach can  lead to an analytic and Riemannian geometric proof for the finite generation of canonical rings on smooth varieties of general type, which is already proved by algebraic methods \cite{BCHM, Siu2}. 

In general,  if $X$ is a projective manifold of  positive Kodaira dimension, it admits a unique canonical twisted K\"ahler-Einstein current constructed in \cite{ST1, ST2} (also see \cite{To2} for collapsing Calabi-Yau manifolds). We hope such analytic canonical metrics can be used to prove the abundance conjecture if the Riemannian collapsing theory for K\"ahler manifolds or the K\"ahler-Ricci flow can be established. 


\section{A priori estimates for the K\"ahler-Ricci flow}

In this section, we will establish some basic estimates for the singular K\"ahler-Einstein metrics on smooth minimal projective manifolds of general type. Let $X$ be a minimal manifold of general type of complex dimension $n$. Let $\Omega$ be a smooth volume form on $X$ and let $\chi = \ddbar\log \Omega \in -c_1(X)$. For any smooth  K\"ahler form $\omega_0\in H^{1,1}(X, \mathbb{R})\cap H^2(X, \mathbb{Q})$, we consider the following Monge-Ampere flow
\begin{equation}\label{maflow}
 \ddt{\varphi} = \log \frac{ (\chi + e^{-t}(\omega_0 - \chi) + \ddbar \varphi)^n}{\Omega} - \varphi, ~ \varphi(0)=0.
 \end{equation}
Without loss of generality, we assume that $\omega_0- \chi$ is K\"ahler.
Let $\omega(t) = \chi + e^{-t} (\omega_0 - \chi) + \ddbar \varphi(t)$ and $g(t)$ be the associated K\"ahler metrics. Then $g(t)$ solves the normalized K\"ahler-Ricci flow
\begin{equation}\label{krflow}
\ddt{g} = - Ric(g) - g, ~ g(0) = g_0,
\end{equation}
where $g_0$ is the initial  K\"ahler metric associated to the K\"ahler form $\omega_0$. 

Let $h_\chi$ be a smooth hermitian metric on $K_X$ defined by $h_\chi = \Omega^{-1}$. Since $K_X$ is big and nef, by Kodaira's lemma there exists an effective divisor $D$ on $X$ such that there exists $\epsilon_0>0$ such that $K_X - \epsilon D$ is ample for all   $\epsilon \in (0, \epsilon_0)$.  Therefore for any $\epsilon$ sufficiently small, there exists a smooth hermitian metric $h_{D, \epsilon}$ such that $\chi - \epsilon Ric(h_{D, \epsilon})$ is K\"ahler. We let $\sigma_D$ be the defining section of $D$ and fix a smooth hermitian metric $h_D$ on $D$.

\begin{lemma} \label{prop21} The following hold for the parabolic Monge-Ampere equation (\ref{maflow}). 

\begin{enumerate}

\item  There exists $C>0$ such that for all $t>0$, we have on $X$
$$\varphi\leq C, ~ \ddt\varphi \leq C. $$

\item For any $\epsilon>0$, there exists $C_\epsilon>0$ such that for all $t\geq 0$, we have on $X$
$$ \varphi \geq \epsilon \log |\sigma_D|^2_{h_D} - C_\epsilon.$$

\medskip

\item There exist $\lambda, C>0$ such that for all $t\geq 0$, we have on $X$
$$tr_{\omega_0}(\omega(t)) \leq C |\sigma_D|_{h_D}^{-2\lambda} . $$

\end{enumerate}

\end{lemma}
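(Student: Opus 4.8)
The three estimates are the standard a priori bounds for the Kähler-Ricci flow on a minimal model of general type, so the plan is to combine the maximum principle with the parabolic Schwarz lemma, using the Kodaira-lemma divisor $D$ to absorb the degeneracy of the limiting class $\chi\in -c_1(X)$ (which is only nef, not Kähler).

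\textit{Step 1 (upper bounds for $\varphi$ and $\dot\varphi$).} For the upper bound on $\varphi$, I would apply the maximum principle to (\ref{maflow}): at a spatial maximum $\ddbar\varphi\le 0$, so $\ddt{\varphi}\le \log\frac{(\chi+e^{-t}(\omega_0-\chi))^n}{\Omega}-\varphi$. Since $\chi$ is nef and $\omega_0-\chi$ is Kähler, the form $\chi+e^{-t}(\omega_0-\chi)$ is Kähler with uniformly bounded volume (it lies in the fixed cohomology class $[\chi]+e^{-t}([\omega_0]-[\chi])$, whose top self-intersection is bounded), so the first term is bounded above by a constant $C_0$; then $\frac{d}{dt}\varphi_{\max}\le C_0-\varphi_{\max}$ gives $\varphi\le C$ by Grönwall. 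For $\dot\varphi\le C$: differentiate (\ref{maflow}) in $t$ to get $\ddt{}\dot\varphi=\Delta_{\omega(t)}\dot\varphi - e^{-t}\,\mathrm{tr}_{\omega(t)}(\omega_0-\chi)-\dot\varphi$; the trace term is nonnegative, so $(\partial_t-\Delta_{\omega(t)})\dot\varphi\le -\dot\varphi$ and hence $e^t\dot\varphi$ is a subsolution, giving a uniform upper bound on $\dot\varphi$ from its bound at $t=0$ (alternatively, consider $(\partial_t-\Delta)((t+1)\dot\varphi+\varphi+nt)$, the classical Tian-Zhang argument).

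\textit{Step 2 (lower bound for $\varphi$ with the $D$-correction).} This is the step that genuinely uses bigness. Pick $\epsilon\in(0,\epsilon_0)$ and use the hermitian metric $h_{D,\epsilon}$ with $\chi-\epsilon\,Ric(h_{D,\epsilon})$ Kähler. Set $u=\varphi-\epsilon\log|\sigma_D|^2_{h_D}$. One checks that $\chi+e^{-t}(\omega_0-\chi)+\ddbar\varphi = (\text{Kähler form})+\epsilon\,\ddbar\log|\sigma_D|^2_{h_D}+\ddbar u$ up to bounded adjustments, so at a spatial minimum of $u$ (which exists away from $D$ since $u\to+\infty$ near $D$), the Monge-Ampère term is bounded below by a positive constant, giving $\ddt{u}\ge -C-u$, and Grönwall produces $u\ge -C_\epsilon$, i.e. $\varphi\ge \epsilon\log|\sigma_D|^2_{h_D}-C_\epsilon$.

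\textit{Step 3 (parabolic Schwarz lemma for $\mathrm{tr}_{\omega_0}\omega(t)$).} Apply the Aubin-Yau/parabolic Schwarz inequality to $H=\log\mathrm{tr}_{\omega_0}(\omega(t))$: since the reference metric $\omega_0$ has bisectional curvature bounded above by some constant $B$, one gets $(\partial_t-\Delta_{\omega(t)})H\le C\,\mathrm{tr}_{\omega(t)}(\omega_0)+C$. Then consider $Q=H-A\,u$ where $u=\varphi-\epsilon\log|\sigma_D|^2_{h_D}$ and $A$ is large; using $\ddbar u$'s positivity and $(\partial_t-\Delta_{\omega(t)})u = \dot\varphi - (\text{trace terms})$, the term $-A\Delta_{\omega(t)}u$ contributes a negative multiple of $\mathrm{tr}_{\omega(t)}(\omega_0)$ that dominates the bad $C\,\mathrm{tr}_{\omega(t)}(\omega_0)$ once $A$ is chosen large, at the cost of $A$ times $\dot\varphi$ which is bounded above by Step 1, and $Au$ which is bounded below by Step 2. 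The maximum principle then bounds $Q$ from above, and exponentiating together with $u\ge\epsilon\log|\sigma_D|^2_{h_D}-C_\epsilon$ gives $\mathrm{tr}_{\omega_0}\omega(t)\le C|\sigma_D|^{-2\lambda}_{h_D}$ with $\lambda=\epsilon A$.

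\textit{Main obstacle.} The only subtle point is the bookkeeping in Step 3: one must carefully track which metric ($\omega_0$, $\omega(t)$, or $\chi$) appears in each trace, ensure the constant $C$ in the Schwarz inequality depends only on the fixed curvature of $\omega_0$, and verify that $u$ (not $\varphi$ alone) is the right quantity to subtract so that the barrier blows up in the right direction near $D$ while keeping $\dot\varphi$ and the Monge-Ampère density under control. Everything else is a routine application of the maximum principle.
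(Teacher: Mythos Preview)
Your proposal is correct and follows essentially the same approach as the paper, which also invokes the maximum principle for (1) and Tsuji's trick for (2) and (3) via the barriers $\varphi-\epsilon\log|\sigma_D|^2_{h_{D,\epsilon}}$ and $\log\mathrm{tr}_{\omega_0}\omega-A\bigl(A\varphi-\log|\sigma_D|^2_{h_{D,1/A}}\bigr)$. The only cosmetic adjustments are that the paper uses the specific hermitian metrics $h_{D,\epsilon}$ (for which $\chi-\epsilon\,Ric(h_{D,\epsilon})$ is K\"ahler by construction) rather than the fixed $h_D$, and in Step~3 the relevant hypothesis on $\omega_0$ for the Aubin--Yau inequality is a \emph{lower} bound on the bisectional curvature, not an upper bound; neither point affects the validity of your outline.
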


\begin{proof}   The first statement follows immediately from the maximum principle. The second and the third statement follow from Tsuji's tricks by applying the maximum principle to $ \varphi - \epsilon \log |\sigma_D|^2_{h_{D, \epsilon}} $ for any $\epsilon \in (0, \epsilon_0]$ and $\log tr_{\omega_0} (\omega) - A \left( A \varphi - \log |\sigma_D|^2_{h_{D, 1/A}} \right) $ for some fixed sufficiently large $A>0$.

\end{proof}

The following lemma follows from the standard third order estimates (either by local estimates \cite{SW} or by global estimates \cite{PSeS} with weights ) and local higher order estimates. 

\begin{lemma} \label{lm21}

For any $k>0$ and compact set $K \subset \subset X \setminus D$, there exists $C_{k, K}>0$ such that for all $t>0$, 
$$ \| \varphi \|_{C^k(K)} \leq C_{k,K}. $$

\end{lemma}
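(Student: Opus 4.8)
The plan is to reduce the claim to uniform-in-time local $C^0$ and $C^2$ estimates for $\varphi$ on compact subsets of $X\setminus D$, and then to bootstrap to all orders by the standard third order (Calabi-type) estimate together with local parabolic Schauder theory. Since the flow (\ref{maflow}) has smooth initial data $\varphi(0)=0$ and smooth reference form $\omega_0$ on all of $X$, short-time existence and smooth dependence already give the bound on any finite time interval, so it suffices to treat $t$ large. Fix $K\subset\subset X\setminus D$ and choose nested open sets $K\subset\subset U''\subset\subset U'\subset\subset U\subset\subset X\setminus D$. On $U$ the functions $|\sigma_D|^2_{h_D}$ and $|\sigma_D|^2_{h_{D,\epsilon}}$, for the finitely many values of $\epsilon$ used below, are bounded between two positive constants; hence Lemma~\ref{prop21}(1)--(2) give a uniform bound $\|\varphi\|_{C^0(U)}\le C_U$ and $\partial_t\varphi\le C$, and Lemma~\ref{prop21}(3) gives the one-sided estimate $\omega(t)\le C_U\,\omega_0$ on $U$, all uniformly in $t>0$.

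The essential step is the matching lower bound $\omega(t)\ge c_U\,\omega_0$ on $U'$, i.e. the uniform parabolicity of (\ref{maflow}) on $U'\times(0,\infty)$. Since $\omega(t)^n=e^{\partial_t\varphi+\varphi}\,\Omega$ by (\ref{maflow}), and since $\tr_{\omega(t)}\omega_0\le C(n)\,(\tr_{\omega_0}\omega(t))^{n-1}\,\omega_0^n/\omega(t)^n$, it suffices to combine $\omega(t)\le C_U\omega_0$ with a uniform lower bound for $\partial_t\varphi+\varphi$ on $U$. I would obtain the latter by the same Tsuji-type device used in Lemma~\ref{prop21}: applying the maximum principle to a suitable weighted combination of $\partial_t\varphi$, $\varphi$ and $\log|\sigma_D|^2_{h_{D,\epsilon}}$ — whose evolution equation is computed from (\ref{maflow}) using that $\chi-\epsilon\,\ric(h_{D,\epsilon})$ is K\"ahler — yields a bound of the form $\partial_t\varphi+\varphi\ge \epsilon\log|\sigma_D|^2_{h_{D,\epsilon}}-C_\epsilon$, which is bounded below on $U$; equivalently, with the $C^0$ bound on $\varphi$ and the upper metric bound in hand, one may quote the local second order estimate for the K\"ahler-Ricci flow \cite{SW}. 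This produces $c_U^{-1}\omega_0\le\omega(t)\le C_U\,\omega_0$ on $U'$ for all $t>0$.

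With uniform parabolicity and a uniformly bounded right-hand side on $U'\times(0,\infty)$, the rest is routine. The standard third order estimate — in the local form of \cite{SW}, or the global weighted form of \cite{PSeS} with the weight absorbing the degeneration of $\omega(t)$ along $D$ — gives a uniform bound on the covariant derivative of $g(t)$ and hence $\|\varphi\|_{C^{2,\alpha}(U'')}\le C$; local parabolic Schauder theory applied to (\ref{maflow}), and to the equations obtained by successively differentiating it, then bootstraps to $\|\varphi\|_{C^k(K)}\le C_{k,K}$ for every $k$, uniformly in $t>0$. I expect the only genuine obstacle to be the uniform lower bound for $\omega(t)$ in the second paragraph: the Tsuji weight forces the constants to deteriorate as the compact set approaches $D$, which is exactly why the statement must be localized on $X\setminus D$.
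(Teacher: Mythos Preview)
Your proposal is correct and follows essentially the same approach as the paper. The paper's proof is a single sentence citing exactly the references you invoke --- the local third order estimates of \cite{SW} or the global weighted ones of \cite{PSeS}, followed by local higher order regularity --- and you have simply supplied the intermediate details (in particular the two-sided metric equivalence on $U'$ via a volume-form lower bound) that the paper treats as absorbed into the word ``standard''.
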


\begin{lemma} \label{lm22}

$\ddt{\varphi}$ converges smoothly to $0$ on any compact subset of $X\setminus D$. 

\end{lemma}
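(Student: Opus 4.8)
The plan is to exploit the monotonicity of $\ddt{\varphi}$ along the flow together with the convergence of the volume integral. First I would differentiate the Monge-Ampère flow (\ref{maflow}) in $t$. Writing $\dot\varphi = \ddt{\varphi}$, differentiating the right-hand side gives
\begin{equation}
\ddt{\dot\varphi} = \Delta_{\omega(t)} \dot\varphi - \dot\varphi - e^{-t}\, \mathrm{tr}_{\omega(t)}(\omega_0 - \chi),
\end{equation}
where $\Delta_{\omega(t)}$ is the (complex, negative-spectrum) Laplacian of $g(t)$. Since $\omega_0 - \chi$ is K\"ahler, the last term is nonpositive, so $(\partial_t - \Delta_{\omega(t)})(e^t \dot\varphi) \le 0$; hence $e^t\dot\varphi$ is a supersolution and, by the maximum principle on the compact manifold $X$, $t \mapsto \max_X (e^t \dot\varphi)$ is nonincreasing. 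Combined with the upper bound $\dot\varphi \le C$ from Lemma \ref{prop21}(1), this shows $e^t \dot\varphi$ is uniformly bounded above, i.e. $\dot\varphi \le C e^{-t} \to 0$ uniformly on all of $X$. The real content is the lower bound on $X \setminus D$.

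For the lower bound I would use an integral argument. From (\ref{maflow}), $\dot\varphi = \log\frac{\omega(t)^n}{\Omega} - \varphi$, so the total volume satisfies $\int_X \omega(t)^n = \int_X e^{\dot\varphi + \varphi}\,\Omega$, and since the cohomology class of $\omega(t)$ is $[\chi] + e^{-t}([\omega_0]-[\chi])$, this integral converges to $[K_X]^n = \int_X \chi^n$ as $t \to \infty$ (using $K_X$ big and nef so $[K_X]^n > 0$). Meanwhile, the monotone convergence just established, together with the a priori estimates (Lemma \ref{lm21}) giving local smooth convergence of $\varphi(t)$ along subsequences to a limit $\varphi_\infty$ on $X \setminus D$, identifies the limiting equation: any subsequential smooth limit $\omega_\infty = \chi + \ddbar\varphi_\infty$ on $X\setminus D$ satisfies $\omega_\infty^n = e^{\psi_\infty + \varphi_\infty}\Omega$ where $\psi_\infty = \lim \dot\varphi$ (which exists pointwise by monotonicity of $e^t\dot\varphi$ — more precisely, monotonicity of $\max_X e^t\dot\varphi$ does not immediately give pointwise monotonicity, so instead I would argue: $\dot\varphi$ is bounded in $C^k_{loc}(X\setminus D)$ by Lemma \ref{lm21} applied to the differentiated equation, hence subsequential limits $\psi_\infty$ exist). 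The key point is that $\int_X e^{\psi_\infty + \varphi_\infty}\Omega = \int_X \omega_\infty^n = \int_X \chi^n = \int_X \omega(t)^n|_{t\to\infty}$, while Fatou and the upper bound $\psi_\infty \le 0$ force, after combining with the fact that $\omega_\infty \ge \chi$ and hence $\int_{X\setminus D}\omega_\infty^n \le \int_X \chi^n$ cannot lose mass, that $\psi_\infty = 0$ a.e.\ on $X\setminus D$; by continuity $\psi_\infty \equiv 0$ there, and since every subsequential limit is $0$, the full limit is $0$.

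The cleanest route, and the one I would actually write, replaces the somewhat delicate mass-balancing above with a direct use of the parabolic equation for $\dot\varphi$. Set $u = e^t \dot\varphi$. We have shown $(\partial_t - \Delta_{\omega(t)}) u \le 0$ and $u \le C$. Applying the parabolic maximum principle more carefully, or testing against $\omega(t)^n$ and integrating, $\frac{d}{dt}\int_X u\, \omega(t)^n \le \int_X u\,\partial_t(\omega(t)^n)$; using $\partial_t \omega(t)^n = (\Delta_{\omega(t)}\dot\varphi - \dot\varphi - \ldots)\omega(t)^n$ and integration by parts one controls $\int_X u\,\omega(t)^n$, and since $u$ is bounded above with $\int_X \dot\varphi\,\omega(t)^n \to 0$ (as $\int_X \omega(t)^n$ converges and $\int_X \dot\varphi\,\omega^n = \frac{d}{dt}\int_X \omega^n$ up to lower order, which is integrable in $t$), one concludes $\int_0^\infty \int_X |\dot\varphi|\,\omega(t)^n\, dt < \infty$, forcing $\dot\varphi \to 0$ in $L^1_{loc}$; upgrading via the uniform $C^k_{loc}(X\setminus D)$ bounds of Lemma \ref{lm21} gives smooth convergence on compacta of $X \setminus D$. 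The main obstacle is making the lower bound on $\dot\varphi$ genuinely locally uniform near $D$ — the estimates of Lemma \ref{prop21} degenerate like powers of $|\sigma_D|^2_{h_D}$ there — but since the statement only asks for compact subsets of $X \setminus D$, on any such set $|\sigma_D|^2_{h_D}$ is bounded below, and the $C^k_{loc}$ bounds of Lemma \ref{lm21} are exactly what is needed to turn the integral decay into pointwise smooth decay.
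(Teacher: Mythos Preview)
Your upper bound is correct and in fact slightly sharper than the paper's: with $u=e^t\dot\varphi$ you get $(\partial_t-\Delta_{\omega(t)})u=-\mathrm{tr}_{\omega(t)}(\omega_0-\chi)\le 0$, so by the maximum principle $\dot\varphi\le Ce^{-t}$ on all of $X$. The trouble is entirely in your treatment of the lower bound, where both routes contain genuine errors.

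In the mass-balancing sketch you write ``$\omega_\infty\ge\chi$'': this is false. One only has $\omega_\infty=\chi+\ddbar\varphi_\infty\ge 0$; there is no reason for $\ddbar\varphi_\infty\ge 0$. Without that inequality the no-mass-loss step does not go through, and making the volume argument rigorous would drag in nontrivial pluripotential theory about full-mass currents. In the ``cleanest route'' you assert $\int_X\dot\varphi\,\omega^n=\frac{d}{dt}\int_X\omega^n$ up to lower order; but $\frac{d}{dt}\int_X\omega^n=-ne^{-t}\int_X(\omega_0-\chi)\wedge\omega^{n-1}$ is a purely cohomological quantity and has nothing to do with $\int_X\dot\varphi\,\omega^n$. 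So the claimed integrability $\int_0^\infty\int_X|\dot\varphi|\,\omega^n\,dt<\infty$ is unjustified.

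What you are missing is that your own upper bound already gives monotonicity of $\varphi$: from $\dot\varphi\le Ce^{-t}$ one has $\ddt{}\big(\varphi+Ce^{-t}\big)\le 0$, so $\varphi+Ce^{-t}$ decreases in $t$. Combined with the lower bound on $\varphi$ from Lemma~\ref{prop21}(2), this forces $\varphi(t)$ to converge pointwise on $X\setminus D$ along the \emph{full} sequence (not just subsequences) to some $\varphi_\infty$; the $C^k_{loc}$ bounds of Lemma~\ref{lm21} upgrade this to smooth convergence on compacta. Since $\dot\varphi=\log\frac{\omega(t)^n}{\Omega}-\varphi$ is expressed through $\varphi$ and its spatial derivatives, it too converges smoothly on compacta to $\psi_\infty:=\log\frac{\omega_\infty^n}{\Omega}-\varphi_\infty\le 0$. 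If $\psi_\infty(x_0)<0$ for some $x_0\in X\setminus D$, then $\dot\varphi(x_0,t)\le \psi_\infty(x_0)/2<0$ for all large $t$, forcing $\varphi(x_0,t)\to-\infty$, contradicting the lower bound. Hence $\psi_\infty\equiv 0$. This is precisely the paper's argument (it uses Zhang's quantity $(e^t-1)\dot\varphi-\varphi$ to get $\dot\varphi\le Ce^{-t/2}$, but the rest is identical), and it is much shorter than either of your proposed lower-bound arguments.
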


\begin{proof} We apply a trick of Zhang \cite{Z2} by looking at the evolution of the following quantity  
$$\Box_t ~ \left( (e^t-1) \ddt{\varphi}  - \varphi\right) = n - tr_\omega(\omega_0), $$
where $\Box_t ~= \ddt{} - \Delta_t$ and $\Delta_t$ is the Laplacian associated to $g(t)$. 
Therefore there exist $C_1, C_2>0$ such that for $t\geq 1$, 
$$\ddt{\varphi} \leq \frac{ nt + \varphi + C_1}{e^t -1} \leq \frac{ C_2~e^{-t/2}}{2}. $$
and so 
$$\ddt{ \left( \varphi + C_2~e^{-t/2} \right) } \leq 0. $$
This implies that $\varphi+ C_2 ~e^{-t/2}$ decrease to $\varphi_\infty \in PSH(X, \chi)\cap C^\infty(X\setminus D)$ and so $\ddt\varphi$ must tend to $0$ away from $D$.

\end{proof}

The following corollary immediately from from Lemma \ref{prop21}, Lemma \ref{lm21} and the proof of Lemma \ref{lm22}.

\begin{corollary} \label{co21}

The solution  $\varphi(t)$ of the parabolic Monge-Ampere equation (\ref{maflow}) converges to a unique $\varphi_\infty\in PSH(X, \chi) \cap C^\infty(X\setminus D)$ as $t\rightarrow \infty$. In particular, for any $\epsilon>0$, there exists $C_\epsilon>0$ such that on $X$
$$ \varphi_\infty \geq  \epsilon \log |\sigma_D|^2_{h_D} - C_\epsilon. $$

\end{corollary}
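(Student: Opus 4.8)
The plan is to assemble the three ingredients cited in the statement. \textbf{Step 1 (pointwise monotone limit).} From the proof of Lemma~\ref{lm22}, the function $\varphi(t)+C_2 e^{-t/2}$ is non-increasing in $t$ for $t\geq 1$. Hence for every $x\in X$ the quantity
$$\varphi_\infty(x) \;:=\; \lim_{t\to\infty}\bigl(\varphi(t,x)+C_2 e^{-t/2}\bigr) \;=\; \lim_{t\to\infty}\varphi(t,x) \;=\; \inf_{t\geq 1}\bigl(\varphi(t,x)+C_2 e^{-t/2}\bigr)$$
exists in $[-\infty,\infty)$, and by construction this limit is unique, so there is nothing to choose.

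\textbf{Step 2 ($\varphi_\infty\in PSH(X,\chi)$ and the lower bound).} By Lemma~\ref{prop21}(2), for each $\epsilon>0$ we have $\varphi(t)\geq \epsilon\log|\sigma_D|^2_{h_D}-C_\epsilon$ uniformly in $t$, and since $\varphi(t)\leq C$ by Lemma~\ref{prop21}(1), passing to the limit gives $\epsilon\log|\sigma_D|^2_{h_D}-C_\epsilon\leq \varphi_\infty\leq C$; in particular $\varphi_\infty\not\equiv-\infty$ and $\varphi_\infty\in L^1(X)$ (the logarithmic pole along $D$ is locally integrable). This already yields the displayed lower bound in the statement. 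For $\chi$-plurisubharmonicity: the decreasing convergence $\varphi(t)+C_2 e^{-t/2}\downarrow\varphi_\infty$ with a uniform upper bound gives, by monotone convergence, $\varphi(t)\to\varphi_\infty$ in $L^1(X)$, hence $\ddbar\varphi(t)\to\ddbar\varphi_\infty$ weakly; since $\chi+e^{-t}(\omega_0-\chi)+\ddbar\varphi(t)\geq 0$ and $e^{-t}(\omega_0-\chi)\to 0$, the limit satisfies $\chi+\ddbar\varphi_\infty\geq 0$, so $\varphi_\infty\in PSH(X,\chi)$.

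\textbf{Step 3 (smooth convergence away from $D$).} By Lemma~\ref{lm21}, for every compact $K\subset\subset X\setminus D$ and every $k$ the family $\{\varphi(t)\}_{t>0}$ is bounded in $C^k(K)$. Given any sequence $t_j\to\infty$, a standard diagonal compactness argument extracts a subsequence converging in $C^\infty_{loc}(X\setminus D)$; by Step~1 the limit must equal $\varphi_\infty$. Since every subsequential limit is the same, the full family $\varphi(t)$ converges to $\varphi_\infty$ in $C^\infty_{loc}(X\setminus D)$, so $\varphi_\infty\in C^\infty(X\setminus D)$. Combining Steps 1--3 gives the corollary.

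I do not expect any genuine obstacle: the statement is, as indicated, a formal consequence of the a priori estimates already in hand. The only two points deserving a line of care are (i) verifying that the decreasing pointwise limit is a bona fide $\chi$-psh function rather than identically $-\infty$ — which is supplied precisely by the divisorial lower bound of Lemma~\ref{prop21}(2) — and (ii) upgrading pointwise convergence to $C^\infty_{loc}$ convergence on $X\setminus D$, which is the routine compactness-plus-uniqueness argument built on the interior estimates of Lemma~\ref{lm21}.
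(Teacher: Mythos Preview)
Your proposal is correct and follows exactly the route the paper indicates: the paper simply says the corollary ``immediately follows from Lemma~\ref{prop21}, Lemma~\ref{lm21} and the proof of Lemma~\ref{lm22}'', and your three steps unpack precisely those ingredients (monotone limit from the proof of Lemma~\ref{lm22}, the lower bound and $\chi$-psh property from Lemma~\ref{prop21}, and the upgrade to $C^\infty_{loc}$ convergence via Lemma~\ref{lm21}). The extra care you take in justifying $\varphi_\infty\in PSH(X,\chi)$ and the compactness-plus-uniqueness argument for smooth convergence is appropriate detail that the paper leaves implicit.
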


Let $\omega_\infty = \chi+ \ddbar \varphi_\infty$. Then $\varphi_\infty$ satisfies the K\"ahler-Einstein equation on  $X\setminus D$  
\begin{equation}\label{keq}
\omega_\infty^n = (\chi + \ddbar \varphi_\infty)^n = e^ {\varphi_\infty}~ \Omega, ~ Ric(\omega_\infty) = - \omega_\infty. 
\end{equation}
 We also have the following existence and uniqueness result.

\begin{lemma} \label{lm23} There exists a unique $\phi  \in PSH(X, \chi) \cap C^\infty(X\setminus D)$ such that

\begin{enumerate}

\item  $(\chi+\ddbar \phi)^n = e^\phi ~\Omega$ on $X\setminus D$, 
\medskip

\item for any $\epsilon>0$, there exists $C_\epsilon>0$ such that on $X$
$$ \phi \geq  \epsilon \log |\sigma_D|^2_{h_D} - C_\epsilon. $$

\end{enumerate}

\end{lemma}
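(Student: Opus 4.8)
The plan is to establish existence and uniqueness separately, with uniqueness being the more delicate point since we are working on the open manifold $X\setminus D$ with only a lower bound on $\phi$ and no a priori upper bound built into the hypotheses. For \emph{existence}, the function $\varphi_\infty$ produced by Corollary \ref{co21} already satisfies (1) by equation (\ref{keq}) and (2) by the last display in Corollary \ref{co21}, so existence requires no new work. The real content is \emph{uniqueness}: given any other $\phi$ satisfying (1) and (2), I would show $\phi = \varphi_\infty$. First I would upgrade (2) together with the Monge--Amp\`ere equation (1) to an upper bound $\phi \leq C$ on all of $X$: since $e^\phi\,\Omega = (\chi+\ddbar\phi)^n$ has finite total mass equal to $\int_X \chi^n$ (the cohomological pairing, as $\chi$ is big and nef this is $(K_X)^n > 0$), and $\phi$ is $\chi$-plurisubharmonic on $X$, a standard argument (e.g. via Kolodziej's estimates \cite{Kol1, EGZ}, or simply because a $\chi$-psh function with $L^1$-controlled Monge--Amp\`ere mass cannot tend to $+\infty$) gives $\sup_X \phi < \infty$; in fact $\phi \in PSH(X,\chi)$ is automatically bounded above because $\chi$ is a fixed smooth closed form and $X$ is compact.

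With two-sided control in hand — $\phi$ bounded above everywhere, bounded below by $\epsilon\log|\sigma_D|^2_{h_D} - C_\epsilon$ — I would run the classical maximum principle / comparison argument of Yau adapted to this degenerate setting. Suppose $\varphi_\infty$ and $\phi$ are two solutions. Consider $u = \varphi_\infty - \phi$ on $X\setminus D$. On the locus where $u$ attains its supremum (which one must first argue lies in $X\setminus D$, or is approached there, using that both functions have the same type of mild singularity along $D$ from estimate (2) — here one uses that $\epsilon\log|\sigma_D|^2$ can be subtracted off with an arbitrarily small coefficient, so no genuine blow-up obstructs the maximum principle), one has $\ddbar u \leq 0$, hence $(\chi+\ddbar\varphi_\infty)^n \leq (\chi+\ddbar\phi)^n$ at that point, i.e. $e^{\varphi_\infty} \leq e^{\phi}$, so $u \leq 0$ there and therefore everywhere; by symmetry $u \equiv 0$. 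To make ``the supremum is controlled by what happens away from $D$'' rigorous, I would instead apply the maximum principle to $u - \delta\big(\!-\log|\sigma_D|^2_{h_{D,\epsilon}}\big)$ or to $u$ shifted by a small multiple of a function forcing it to $-\infty$ on $D$, then let the auxiliary parameter go to zero; this is exactly the mechanism already used for the second and third parts of Lemma \ref{prop21} and in the proof of Lemma \ref{lm22}.

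The main obstacle is the uniqueness direction, and within it the handling of the divisor $D$: one cannot naively apply the maximum principle on the noncompact manifold $X\setminus D$ without first ruling out that the relevant extrema escape to $D$. The estimate (2), valid for \emph{every} $\epsilon>0$, is precisely what defeats this — it says both competing solutions have a singularity along $D$ that is weaker than any $\epsilon\log|\sigma_D|^2$, so a barrier argument with coefficient $\to 0$ traps the difference. An alternative, perhaps cleaner route is to invoke the pluripotential-theoretic comparison principle for bounded (or suitably integrable) $\chi$-psh functions directly: once $\phi$ is known to be globally bounded, $\phi$ and $\varphi_\infty$ are both bounded $\chi$-psh solutions of the same complex Monge--Amp\`ere equation $(\chi+\ddbar\cdot)^n = e^{(\cdot)}\Omega$ on $X$ (the equation extends across $D$ because both sides define the same positive measure with no mass on the pluripolar set $D$), and uniqueness for such equations with the exponential nonlinearity on the right — which is monotone in $\phi$ — is standard \cite{Kol1, EGZ}. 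I would present the proof along these lines, reducing to the known uniqueness statement after establishing the global $L^\infty$ bound on $\phi$.
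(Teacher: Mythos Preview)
Your existence argument (take $\phi=\varphi_\infty$) matches the paper. For uniqueness, however, the paper does \emph{not} compare two elliptic solutions directly; it runs the parabolic flow against the putative second solution. Concretely, given another solution $\varphi'$, one sets
\[
\psi_\epsilon(t)=\varphi(t)-\varphi'-\epsilon e^{-t}\log|\sigma_D|^2_{h_D}+Ae^{-t},
\]
where $\varphi(t)$ solves the Monge--Amp\`ere flow (\ref{maflow}). Because $\omega(t)=\chi+e^{-t}(\omega_0-\chi)+\ddbar\varphi(t)$ is a genuine K\"ahler metric on all of $X$ for every finite $t$, the parabolic minimum principle applies cleanly: $\psi_\epsilon\to+\infty$ along $D$, $\psi_\epsilon(0)\ge0$ for large $A$, and the evolution equation forces $\psi_\epsilon\ge0$ for all $t$. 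Letting $t\to\infty$ (the barrier coefficient $\epsilon e^{-t}$ vanishes in the limit) gives $\varphi_\infty\ge\varphi'$; equality then follows from the comparison principle, e.g.\ by integrating both Monge--Amp\`ere equations against $\Omega$.

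Your direct elliptic scheme runs into a real obstacle that the parabolic route avoids. When you add a barrier $\delta\log|\sigma_D|^2$ to $u=\varphi_\infty-\phi$ and locate the extremum at some $p_\delta\in X\setminus D$, the resulting Hessian inequality gives $\omega_\phi\le\omega_\infty+\delta\,Ric(h_D)$ (or the symmetric statement). To extract $\phi-\varphi_\infty\le O(\delta)$ at $p_\delta$ you would need $\delta\,Ric(h_D)$ to be small \emph{relative to} $\omega_\infty$ at $p_\delta$; but $\omega_\infty$ may degenerate near $D$, and nothing prevents $p_\delta\to D$ as $\delta\to0$ (the constants $C_\epsilon$ in hypothesis (2) are not controlled as $\epsilon\to0$, so the barrier does not trap $p_\delta$ in a fixed compact set). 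The mechanism in Lemma~\ref{prop21} that you cite works precisely because the reference form $\chi-\epsilon\,Ric(h_{D,\epsilon})$ there is \emph{K\"ahler}; here neither $\omega_\infty$ nor $\omega_\phi$ is.

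Your ``cleaner alternative'' has a separate gap: it presupposes $\phi\in L^\infty(X)$, but only the upper bound is automatic from compactness. A global lower bound for solutions in this big--nef (not a priori semi-ample) class is exactly what the paper establishes at the very end (the final Corollary) as a \emph{consequence} of Theorem~\ref{main1}, so you cannot invoke it here. If you want an elliptic proof, the honest fix is to approximate by a family $(\chi+\epsilon\omega_0'+\ddbar\varphi_\epsilon)^n=e^{\varphi_\epsilon}\Omega$ with $\epsilon>0$, whose solutions are smooth K\"ahler potentials on all of $X$, run your barrier comparison against $\varphi_\epsilon$, and let $\epsilon\to0$---which is morally the same device as the paper's, with the flow replaced by a continuity path.
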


\begin{proof} It suffices to prove the uniqueness. Suppose there exists another solution $\varphi'$. Then we consider $ \psi_\epsilon= \varphi(t) - \varphi' - \epsilon e^{-t} \log |\sigma_D|^2_{h_D} + A e^{-t}$ for sufficiently small $\epsilon>0$. Then
$$\ddt ~ \psi_\epsilon = \log \frac{ ( \chi + \ddbar \varphi' + e^{-t} (\omega_0 - \chi - \epsilon Ric(h_D)) + \ddbar \psi_\epsilon )^n}{ (\chi +\ddbar \varphi' )^n} - \psi_\epsilon. $$
Since $\psi_\epsilon$ tends to $\infty$ along $D$ and $\psi_\epsilon(0) \geq 0$ for sufficiently large $A>0$, we can apply the maximum principle and so $\psi_\epsilon \geq 0$ for all $t\geq 0$. By letting $t\rightarrow \infty$, we have 
$$\varphi_\infty \geq \varphi'. $$
Then it immediately follows from the comparison principle that $\varphi_\infty = \varphi'$ on $X\setminus D$ and so the lemma follows. 

\end{proof}

We let $h_t = (\omega(t))^{-n}$ be the hermitian metric on $K_X$ for $t\in [0, \infty)$. Then we have the following lemma.

\begin{lemma} \label{lm24}  For any $\sigma \in H^0(X, mK_X)$, there exits $C>0$ such that for all $t\geq 0$, 
\begin{equation} \label{pw1}
 \sup_X |  \sigma |^2_{h_t^m} \leq C.
\end{equation}

\end{lemma}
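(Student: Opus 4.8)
The plan is to control the pointwise norm $|\sigma|^2_{h_t^m}$ by evolving its logarithm along the Monge-Ampère flow and applying the maximum principle. Set $u_t = \log |\sigma|^2_{h_t^m}$, where $h_t = \omega(t)^{-n}$ is the hermitian metric on $K_X$ induced by the evolving Kähler form. Since $\sigma$ is a holomorphic section of $mK_X$, the function $u_t$ is defined on all of $X$ (with $u_t \to -\infty$ along the zero divisor of $\sigma$), and away from that divisor it is smooth. The curvature of $h_t$ is $-\mathrm{Ric}(\omega(t))$, so $\ddbar \log h_t = \mathrm{Ric}(\omega(t))$, giving $\ddbar u_t = m\,\mathrm{Ric}(\omega(t))$ on the complement of $\{\sigma = 0\}$. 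The key computation is the evolution equation for $u_t$ under the flow \eqref{maflow}: using $\frac{\partial}{\partial t}\log h_t = -n \frac{\partial}{\partial t}\log \omega(t)^n$ together with the flow equation, one obtains an identity of the schematic form
\begin{equation*}
\Box_t u_t = -m\,\mathrm{tr}_{\omega(t)}\big(\mathrm{Ric}(\omega(t))\big) + (\text{lower order terms from }\partial_t\varphi),
\end{equation*}
which, after invoking the normalized Kähler-Ricci flow relation $\mathrm{Ric}(\omega(t)) = -\omega(t) - \partial_t \omega(t)$, turns into something controllable.

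**First I would** make this precise by differentiating $\log h_t^m = -mn \log\big(\frac{\omega(t)^n}{\Omega}\big) + m\log h_\chi^{?}$ carefully, using \eqref{maflow} to write $\log\frac{\omega(t)^n}{\Omega} = \partial_t\varphi + \varphi$, so that $\log h_t^m$ differs from $-mn(\partial_t\varphi + \varphi)$ by a fixed smooth quantity independent of $t$. Hence $u_t = -mn(\partial_t\varphi + \varphi) + (\text{fixed bounded function away from }D) + \log|\sigma|^2$ for a fixed smooth reference metric. By Lemma \ref{prop21}(1) we have $\varphi \le C$ and $\partial_t\varphi \le C$, so $u_t$ is bounded above by $-mn(\partial_t\varphi + \varphi)$ plus fixed constants; the only issue is a possible lower bound blowing up, i.e. $\partial_t\varphi + \varphi \to -\infty$. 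But $\partial_t\varphi + \varphi = \log\frac{\omega(t)^n}{\Omega}$, and a lower bound for this is exactly a volume lower bound, which on compact subsets of $X \setminus D$ follows from Lemma \ref{lm21}, while near $D$ it could fail — so the naive argument only gives the bound on compacta, not globally. This suggests that the clean route is instead the maximum-principle / Moser-type argument: since $u_t$ attains its supremum over $X$ at an interior point away from $\{\sigma = 0\}$ (as $u_t \to -\infty$ there), at that point $\ddbar u_t \le 0$ and $\nabla u_t = 0$, and one plugs this into the evolution equation $\Box_t u_t = \cdots$ to conclude $\frac{d}{dt}\sup_X u_t \le C$, giving at worst linear-in-$t$ growth — which is not quite \eqref{pw1}.

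**The fix, and the step I expect to be the main obstacle,** is to get the \emph{uniform-in-$t$} bound rather than merely linear growth. The right quantity to feed to the maximum principle is not $u_t$ alone but a combination such as $u_t + mn\varphi$ or $u_t - m\,\partial_t(\text{something})$ chosen so that the bad term $-m\,\mathrm{tr}_{\omega(t)}(\mathrm{Ric}(\omega(t)))$ cancels against the evolution of the correction, exactly in the spirit of the Zhang trick used in Lemma \ref{lm22} (where $(e^t-1)\partial_t\varphi - \varphi$ was the magic combination) and the Tsuji tricks cited in Lemma \ref{prop21}. Concretely, since $u_t + mn\varphi$ equals $-mn\,\partial_t\varphi + (\text{fixed smooth}) + \log|\sigma|^2$ away from $D$, and $\partial_t\varphi$ is bounded \emph{above} by Lemma \ref{prop21}(1), one immediately gets $u_t + mn\varphi \le C + \log|\sigma|^2_{h^m} \le C'$ using $\varphi \ge \epsilon \log|\sigma_D|^2_{h_D} - C_\epsilon$ from Lemma \ref{prop21}(2) to absorb the $-mn\varphi$ term (note $|\sigma_D|^2_{h_D}$ is bounded above), at the cost of an $\epsilon$-dependent constant. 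Taking $\epsilon \to 0$, or more carefully balancing $\epsilon$ against $m$, yields $\sup_X |\sigma|^2_{h_t^m} \le C$ with $C$ independent of $t$ (though depending on $\sigma$ and $m$). The delicate point is the interplay near $D$ between the logarithmic lower bound on $\varphi$ and the possible degeneration of $h_t$ there; I would handle it by working on $X \setminus D$ where everything is smooth, using that $u_t + mn\varphi$ extends continuously (indeed is bounded above) across $D$ because $\log\frac{\omega(t)^n}{\Omega} = \partial_t\varphi + \varphi$ is bounded above globally by Lemma \ref{prop21}, so the supremum over $X \setminus D$ equals the supremum over $X$ and the constant is uniform.
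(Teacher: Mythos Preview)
Your approach has a genuine gap at the crucial step. Recall that $h_t = (\omega(t)^n)^{-1}$, so
\[
\log|\sigma|^2_{h_t^m} = \log|\sigma|^2_{h_\chi^m} - m\log\frac{\omega(t)^n}{\Omega} = \log|\sigma|^2_{h_\chi^m} - m(\dot\varphi + \varphi).
\]
(Your extra factor of $n$ is spurious, but that is harmless.) The statement $\sup_X|\sigma|^2_{h_t^m}\le C$ is therefore \emph{equivalent} to the volume lower bound $\dot\varphi+\varphi \ge m^{-1}\log|\sigma|^2_{h_\chi^m} - C$, i.e.\ $\omega(t)^n \ge c\,|\sigma|^{2/m}$. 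None of the bounds in Lemma~\ref{prop21} give this: $\varphi\le C$ and $\dot\varphi\le C$ are upper bounds, and when you write ``$\partial_t\varphi$ is bounded above \dots\ one immediately gets $u_t+m\varphi\le C'$'' you have reversed a sign, since $u_t+m\varphi = \log|\sigma|^2_{h_\chi^m} - m\dot\varphi$ and an \emph{upper} bound on $\dot\varphi$ yields only a \emph{lower} bound on $-m\dot\varphi$. Likewise, the lower bound $\varphi\ge \epsilon\log|\sigma_D|^2_{h_D}-C_\epsilon$ gives $-m\varphi \le -m\epsilon\log|\sigma_D|^2_{h_D}+mC_\epsilon$, and the right side blows up along $D$, so it cannot absorb the $-m\varphi$ term. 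In short, the direct manipulation of $\varphi$ and $\dot\varphi$ cannot produce the required volume lower bound; the lemma is a genuinely new estimate, not a corollary of Lemma~\ref{prop21}.

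The paper's proof supplies the missing idea: it compares $\omega(t)^n$ with the degenerate volume form $\Omega_m=(\sum_j|\sigma_j|^2)^{1/m}$ built from a basis of $H^0(X,mK_X)$, via the quantity $H=\log(\Omega_m/\omega(t)^n)$. One computes $\Box_t H = n - \mathrm{tr}_\omega(\theta)$, where $\theta$ is the pullback of the Fubini--Study metric under the rational map $|mK_X|$. After passing to a log resolution $\pi:X'\to X$ of the base locus (so that $\pi^*(mK_X)=L+E$ with $L$ semi-ample) and adding a bounded potential $\phi$ solving $(\tfrac{1}{2}\theta+\ddbar\phi)^n=\Omega'$ together with small logarithmic barriers $\epsilon\log|\sigma_{D'}|^2+\epsilon^2\log|\sigma_E|^2$, the maximum principle applied to $G_\epsilon=H+\phi+\epsilon\log|\sigma_{D'}|^2+\epsilon^2\log|\sigma_E|^2$ yields $\Box_t G_\epsilon \le n - C(\Omega_m/\omega(t)^n)^{1/n}$, forcing $H$ to be uniformly bounded above. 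This parabolic Schwarz--lemma argument with respect to the Fubini--Study metric of the linear system is the substantive input your sketch is missing.
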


\begin{proof} Without loss of generality, we can assume that for sufficiently large $m \in \mathbb{Z}$, a basis $\{\sigma_j\}_{j=0}^{d_m} $ of $H^0(X, mK_X)$ gives a birational map from $X$ into the projective space $\mathbb{CP}^{d_m}$, where $d_m+1 = h^0(X, mK_X)$. We consider a resolution for the base locus $\{\sigma_j\}_j$
$$\pi: X' \rightarrow X$$ such that
$$ \pi^*(mK_X ) = L +E, $$
where $L$ is semi-ample and $E$ is the fixed part of $\pi^*(mK_X)$. We can assume that  $E$ is a divisor of simple normal crossings. Since $L$ is big and semi-ample, there exists an effective divisor $D'$ on $X'$ such $L - \epsilon D'$ is ample for all $\epsilon>0$. 

 The closed form $\theta =m^{-1} \ddbar \log (\sum_{j=0}^{d_m} |\sigma_j|^2)|$ on $X'\setminus E$ is the Fubini-Study metric which smoothly extends to $X'$ globally in  $c_1(L)$.  There exists a smooth hermitian metric $h_{D'}$ on the line bundle associated to $[D']$ such that 
$$\theta- \epsilon Ric(h_{D'})>0$$
for all sufficiently small $\epsilon>0$.  

Let $\Omega_m = ( \sum_{j=0}^{d_m}  |\sigma_j|^2 )^{1/m}$ be the smooth real nonnegative $(n, n)$-form on $X$. We let 
$$H = \log \frac{ \Omega_m}{\Omega_t}, $$
where  $\Omega_t = \omega(t)^n $. 
Then $H$ is bounded above and smooth outside the base locus of $\{\sigma_j\}_j$ and the evolution equation for $H$  is given by 
$$ \Box_t  ~H = n  - tr_\omega (\theta). $$
We now lift the above equation to $X'$ and it is smooth on $X'\setminus E. $

We now consider the Monge-Ampere equation
$$(\frac{\theta}{2} + \ddbar \phi)^n =  \Omega'$$
for some smooth volume form $\Omega'$ on $X'$ with $\int_{X'} \Omega' = (2)^{-n} \int_{X'} \theta^n$. By standard argument \cite{Ts, EGZ, ST2}, $\phi\in C^0(X')\cap PSH(X', \theta)\cap C^\infty(X'\setminus D)$. 
Let 
$$G_\epsilon = H +  \phi +  \epsilon \log |\sigma_{D'}|^2_{h_{D'}}   + \epsilon^2 \log |\sigma_E|^2_{h_E}, $$
where $\sigma_E$ is the defining section of $E$ and $h_E$ is a smooth hermitian metric on the line bundle associated to $E$.  Then $G_\epsilon $ is smooth on $X'\setminus (E\cup D)$ and tends to $-\infty$ along $E\cup D$ and we can apply the maximum priniciple for $G_\epsilon$ on $X'\setminus ( E\cup D)$. 
Then there exists $C>0$ such that 
\begin{eqnarray*}
 \Box_t  ~G_\epsilon &=& n - tr_\omega \left( \frac{\theta}{2} + \ddbar \phi \right) - tr_\omega \left( \frac{\theta}{2} - \epsilon Ric(h_D) - \epsilon^2 Ric(h_E)  \right) \\
  &\leq& n -  \left( \frac{ \Omega'}{\Omega_t} \right)^{1/n} \leq  n -  C\left( \frac{ \Omega_m}{\Omega_t} \right)^{1/n} 
  \end{eqnarray*}
for sufficiently small $\epsilon>0$.
By the maximum principle, $G_\epsilon$ is bounded above uniformly for all $t$ and $\epsilon$.  By letting $\epsilon \rightarrow 0$, $H$ is uniformly bounded above and this proves the lemma.

\end{proof}

\begin{corollary} \label{co22} Let $h_\infty = h_\chi e^{-\varphi_\infty } =\left( \omega_\infty^{n} \right)^{-1}$. 
Then for any $m$ and $\sigma\in H^0(X, m K_X )$, there exists $C>0$ such that 
\begin{equation} \label{pw3} 
\sup_X |\sigma|^2_{ (h_\infty)^m} <C, 
\end{equation}
or equivalently there exists $C>0$ such that on $X$, 
$$\varphi_\infty \geq  m^{-1} \log |\sigma|^2_{h_\chi^m} - C. $$

\end{corollary}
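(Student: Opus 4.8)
The plan is to obtain Corollary~\ref{co22} as a limiting consequence of the uniform estimate in Lemma~\ref{lm24} together with the convergence statement of Corollary~\ref{co21}. Recall that $h_t = (\omega(t))^{-n}$ and that by the Monge-Amp\`ere flow \eqref{maflow} we have, along the flow, $\omega(t)^n = e^{\partial_t \varphi + \varphi}\,\Omega$, so that $h_t^{-1} = e^{\partial_t\varphi + \varphi}\,\Omega$ and hence $h_t = h_\chi\, e^{-\partial_t\varphi - \varphi}$, where $h_\chi = \Omega^{-1}$. For $\sigma \in H^0(X, mK_X)$ this gives the pointwise identity
\begin{equation*}
\log |\sigma|^2_{h_t^m} = \log |\sigma|^2_{h_\chi^m} - m\big(\partial_t \varphi + \varphi\big).
\end{equation*}
Lemma~\ref{lm24} furnishes $C>0$, independent of $t$, with $|\sigma|^2_{h_t^m} \le C$ on $X$, i.e. $\log|\sigma|^2_{h_\chi^m} - m(\partial_t\varphi + \varphi) \le \log C$ on $X$ for all $t \ge 0$.

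First I would pass to the limit $t \to \infty$ in this inequality. By Corollary~\ref{co21}, $\varphi(t) \to \varphi_\infty$ in $C^\infty_{loc}(X\setminus D)$, and by Lemma~\ref{lm22}, $\partial_t\varphi \to 0$ locally uniformly on $X\setminus D$. Therefore, on every compact $K \subset\subset X\setminus D$,
\begin{equation*}
\log|\sigma|^2_{h_\chi^m} - m\varphi_\infty \le \log C \quad \text{on } K,
\end{equation*}
and since $K$ is arbitrary this holds on all of $X\setminus D$. Equivalently $\varphi_\infty \ge m^{-1}\log|\sigma|^2_{h_\chi^m} - m^{-1}\log C$ on $X\setminus D$. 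Because $\varphi_\infty \in PSH(X,\chi)$ and $m^{-1}\log|\sigma|^2_{h_\chi^m}$ is (quasi-)psh on $X$, both sides extend as upper-semicontinuous functions across $D$, and an inequality between a psh function and a psh function valid off a pluripolar set extends to the whole of $X$; alternatively one notes $D$ has measure zero and uses the submean-value property. This yields the second, equivalent form of the Corollary with $C$ replaced by $m^{-1}\log C$.

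It then remains only to translate this into the first form \eqref{pw3}. Using $h_\infty = h_\chi\, e^{-\varphi_\infty} = (\omega_\infty^n)^{-1}$, which is \eqref{keq}, we get $\log|\sigma|^2_{h_\infty^m} = \log|\sigma|^2_{h_\chi^m} - m\varphi_\infty$, and the bound just obtained says exactly $\sup_X |\sigma|^2_{h_\infty^m} < C$. So the two formulations are literally the same statement and no further work is needed.

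The only genuine subtlety — and the step I would be most careful about — is the extension of the inequality across the divisor $D$ (and, in the setting of Lemma~\ref{lm24}, across $E$ after pushing forward from $X'$): one must make sure that taking $t\to\infty$ and restricting to $X\setminus D$ does not lose the global bound near $D$. This is handled exactly as in Corollary~\ref{co21}: the right-hand side $m^{-1}\log|\sigma|^2_{h_\chi^m}$ is bounded above on $X$ and quasi-psh, $\varphi_\infty$ is $\chi$-psh hence in $L^1(X)$, and the locally-defined inequality on the complement of a pluripolar/analytic set propagates to $X$ by the standard extension of (quasi-)psh functions. Everything else is a direct substitution using the flow equation and the limits already established in Lemmas~\ref{lm22}, \ref{lm21} and Corollary~\ref{co21}.
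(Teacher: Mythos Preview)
Your argument is correct and is exactly the intended one: the paper states this as an immediate corollary of Lemma~\ref{lm24} with no separate proof, and passing to the limit $t\to\infty$ on $X\setminus D$ using Lemma~\ref{lm22} and Corollary~\ref{co21}, then extending across the measure-zero set $D$ via the quasi-psh property (your $\max$/submean-value remark), is precisely how one fills in the details. The aside about pushing forward from $X'$ across $E$ is unnecessary here, since Lemma~\ref{lm24} already delivers the bound on all of $X$, but this does not affect the argument.
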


\begin{lemma} \label{lm25} Let $h_t = \left( \omega(t)^n \right)^{-1}$ be the smooth hermitian metric on $K_X$. For any $\sigma\in H^0(X, mK_X)$, there exists $C>0$ such that for all $t$, 
$$ \sup_X | \nabla_t ~\sigma |^2_{g(t), h_t^m} \leq C. $$
\end{lemma}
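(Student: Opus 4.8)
The plan is to estimate the pointwise norm of $\nabla_t\sigma$ by a Bochner-type argument combined with the zeroth-order bound $\sup_X|\sigma|^2_{h_t^m}\le C$ from Lemma~\ref{lm24}. Write $u = |\sigma|^2_{h_t^m}$ and $v = |\nabla_t\sigma|^2_{g(t), h_t^m}$. Since $\sigma$ is holomorphic and $h_t^m = (\omega(t)^n)^{-m}$ has curvature $-m\,\mathrm{Ric}(\omega(t))$, the Bochner-Kodaira formula gives $\Delta_t u = v - m\,\mathrm{Ric}(\omega(t))\lrcorner\, (\text{something in }u)$; more precisely one gets an identity of the schematic form $\Delta_t u \geq v - C m \, |\sigma|^2_{h_t^m} \cdot \sup(-\mathrm{Ric}(\omega(t)))^+$, and since $\mathrm{Ric}(\omega(t)) = -\omega(t) - \ddt{g}$ along the flow, the curvature term is controlled: $\mathrm{Ric}(\omega(t)) \geq -\omega(t) - C\,\omega(t)$ is not automatic, but $-\mathrm{Ric}(\omega(t)) = \omega(t) + \ddt{g}$ and $\ddt{g}$ is controlled once we know $\ddt\varphi$ and its Hessian are bounded on compacta. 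The cleaner route is to work directly with the potential: from $h_t^m = h_\chi^m e^{-m\varphi(t)}\cdot(\text{ratio of volume forms})$, one has $\log u = \log|\sigma|^2_{h_\chi^m} - m\varphi - m\log(\omega(t)^n/\Omega)$ up to constants, and $\ddbar \log u = -m\,\mathrm{Ric}(\omega(t))$ away from the zero locus of $\sigma$.

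The key steps, in order: (i) Derive the differential inequality $\Delta_t \log u \geq -m\,\mathrm{tr}_{\omega(t)}\,\mathrm{Ric}(\omega(t)) = m\,\mathrm{tr}_{\omega(t)}(\omega(t)) - m\,\mathrm{tr}_{\omega(t)}(\ddt{g}/\!-1)$, using $\mathrm{Ric}(\omega(t)) = -\omega(t) - \ddt{g}$; equivalently $\Delta_t \log u = m n + m\,\mathrm{tr}_{\omega(t)}(\ddt g)$, which one recognizes as $m\,\Box_t(\text{something})$ up to lower order and hence is a priori controlled. (ii) Compute $\Delta_t v$ via Bochner: for a holomorphic section of a Hermitian line bundle $(F,\theta)$ over a Kähler manifold, $\Delta |\nabla\sigma|^2 = |\nabla\nabla\sigma|^2 + \mathrm{Ric}\lrcorner\,|\nabla\sigma|^2 \text{-type terms} + (\text{curvature of }\theta)\text{-terms}\cdot(|\nabla\sigma|^2 + u\cdot\text{curv})$; along the Kähler-Ricci flow the $\mathrm{Ric}$ term combines favorably with the line-bundle curvature $-m\,\mathrm{Ric}(\omega(t))$, producing $\Delta_t v \geq -C v - C' u$ with $C, C'$ independent of $t$. (iii) Apply the maximum principle to $Q = v + B u$ for a large constant $B$: at an interior maximum of $Q$, $0 \geq \Delta_t Q \geq v - C v - C'u + B(\Delta_t u)$, and choosing $B$ large enough to dominate absorbs the bad terms, yielding $v \leq C$. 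Here one uses that $u$ is already bounded (Lemma~\ref{lm24}) and that $\Delta_t u$ enters with a good sign or is itself bounded on the relevant region.

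The main obstacle is uniformity in $t$ as the metric $\omega(t)$ degenerates near $D$: the Laplacian $\Delta_t$ and the connection $\nabla_t$ both blow up there, and $Q$ need not attain its maximum in the region where we have a priori control. The standard fix, which I would follow, is to add a cutoff weight: replace $Q$ by $\tilde Q = v + B u + \epsilon\, e^{-t}\,(\cdots) + \delta \log|\sigma_D|^2_{h_{D,\delta}}$ or multiply $v$ by a power of $|\sigma_D|^2_{h_D}$, exactly as in the proofs of Lemma~\ref{prop21}(3) and Lemma~\ref{lm24}, so that the modified quantity tends to $-\infty$ along $D$ and the maximum principle applies on $X\setminus D$; the weight terms contribute only controlled errors to $\Delta_t$, and letting $\delta \to 0$ at the end recovers the clean bound. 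A secondary technical point is that $v$ is only smooth away from the zero divisor of $\sigma$, but there $\log u \to -\infty$, so $u$ itself (not $\log u$) is globally smooth and the argument with $Q = v + Bu$ is unaffected.
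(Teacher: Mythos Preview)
Your approach has a genuine gap: you work with the elliptic Laplacian $\Delta_t$ at each fixed time, but the Bochner formula for $\Delta_t v$ (with $v=|\nabla_t\sigma|^2$) unavoidably produces a term of the form $-(1+m)\,R_{i\bar j}\,\nabla^i\sigma\,\overline{\nabla^j\sigma}$, and along the flow $\mathrm{Ric}(g(t))$ is \emph{not} known to be uniformly bounded below as $t\to\infty$ (only the scalar curvature is). Your asserted inequality $\Delta_t v\ge -Cv-C'u$ with $C,C'$ independent of $t$ therefore does not follow; the ``favorable combination'' you allude to yields $-(1+m)\mathrm{Ric}$, not zero. The paper avoids this by using the \emph{parabolic} operator $\Box_t=\partial_t-\Delta_t$: since $\partial_t g=-\mathrm{Ric}-g$ and $\partial_t\log h_t^m=m(R+n)$, the time-derivative contributions exactly cancel the Ricci terms from Bochner, leaving the clean identities
\[
\Box_t\,|\sigma|^2 = mn\,|\sigma|^2 - |\nabla_t\sigma|^2,\qquad
\Box_t\,|\nabla_t\sigma|^2 = -|\nabla_t\nabla_t\sigma|^2 + (1+mn)\,|\nabla_t\sigma|^2,
\]
with only dimensional constants. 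One then applies the spacetime maximum principle to $H=|\nabla_t\sigma|^2+A|\sigma|^2$ with $A>1+mn$, which is precisely your combination $Q=v+Bu$ but with the correct operator.

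A second point: your worry about degeneration near $D$ and the proposed cutoff $\delta\log|\sigma_D|^2$ is unnecessary here. For every finite $t$ the metric $g(t)$ is smooth on all of $X$, so $u$, $v$ and $H$ are globally smooth and the maximum principle applies on $X\times[0,T]$ without any barrier. The degeneration happens only in the limit $t\to\infty$, and the parabolic argument gives a bound uniform in $t$ before that limit is taken.
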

\begin{proof} For simplicity, we write $|\sigma|^2$ and $| \nabla _t ~\sigma |^2$ for $|\sigma|^2_{h_t^m}$ and $| \nabla _t ~\sigma |^2_{g(t), h_t^m}$. The evolution equation for $| \nabla _t ~\sigma |^2$ is given by 
\begin{equation}\label{pw2} \Box_t  ~|\nabla_t ~ \sigma|^2      =  - |\nabla _t \nabla_t ~\sigma|^2 + (1+mn)  |\nabla_t ~ \sigma |^2  . 
\end{equation}
Also we have
$$ \Box_t  ~| \sigma|^2      =   m n | \sigma|^2  - |\sigma|^{-2} |\nabla_t~ |\sigma|^2|^2 =   m n | \sigma|^2  - |\nabla_t~ \sigma|^2. $$
Let $H = |\nabla_t ~ \sigma|^2  + A |\sigma|^2$. The lemma is then proved by applying the maximum principle to $H$ after choosing sufficiently large $A>0$.

\end{proof}

By Lemma \ref{lm25} and the local smooth convergence of $\omega(t)$ on $X\setminus D$, we have the following corollary. 

\begin{corollary} \label{co23}   For any $m$ and $\sigma\in H^0(X, mK_X)$, there exists $C>0$ such that 
\begin{equation} \label{pw4}
 \sup_X |\nabla_\infty \sigma|^2_{g_\infty, (h_\infty)^m}  <C.
 \end{equation}

\end{corollary}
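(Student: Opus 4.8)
The plan is to bootstrap from the uniform gradient bound of Lemma \ref{lm25} together with the locally smooth convergence of $\omega(t)$ to $\omega_\infty$ on $X \setminus D$ established in Corollary \ref{co21}. First I would fix a section $\sigma \in H^0(X, mK_X)$ and recall that $|\sigma|^2_{h_t^m}$ and $|\nabla_t \sigma|^2_{g(t), h_t^m}$ are, by Lemma \ref{lm25}, uniformly bounded by a constant $C$ independent of $t$ on all of $X$. Next, on any compact set $K \subset\subset X \setminus D$, Lemma \ref{lm21} gives uniform $C^k$ bounds on $\varphi(t)$, hence uniform $C^{k-2}$ bounds on the metrics $g(t)$ and on the hermitian metrics $h_t = (\omega(t)^n)^{-1}$ on $K_X$; combined with Corollary \ref{co21}, a subsequence (in fact the full family, by uniqueness of the limit) converges in $C^\infty_{loc}(X \setminus D)$ to $g_\infty$ and $h_\infty$. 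Therefore on each such $K$ the pointwise quantity $|\nabla_t \sigma|^2_{g(t), h_t^m}$ converges smoothly to $|\nabla_\infty \sigma|^2_{g_\infty, h_\infty^m}$, and passing to the limit in the inequality $|\nabla_t \sigma|^2_{g(t), h_t^m} \leq C$ yields $|\nabla_\infty \sigma|^2_{g_\infty, h_\infty^m} \leq C$ on $K$.

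Since $K \subset\subset X \setminus D$ was arbitrary and $C$ does not depend on $K$, this gives $\sup_{X \setminus D} |\nabla_\infty \sigma|^2_{g_\infty, h_\infty^m} \leq C$, which is exactly (\ref{pw4}): the quantity is only a priori defined on $X \setminus D$ since $g_\infty$ and $\varphi_\infty$ are singular along $D$, so the supremum over $X$ is understood as the supremum over the locus where $g_\infty$ is a genuine smooth metric. (If one wishes, one can note that $|\nabla_\infty \sigma|^2_{g_\infty, h_\infty^m}$ extends as a bounded function across $D$ as well, but this is not needed for the statement.)

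The only subtlety, and the step I would be most careful about, is the interchange of the $t \to \infty$ limit with the evaluation of the norm: one must make sure that the convergence $g(t) \to g_\infty$, $h_t \to h_\infty$ is in a topology strong enough ($C^1_{loc}$ on metrics suffices here, and we have $C^\infty_{loc}$) that the nonlinear expression $|\nabla_t \sigma|^2_{g(t), h_t^m} = g(t)^{i\bar j} \partial_i(\sigma \otimes h_t^{m/2}) \overline{\partial_j(\sigma \otimes h_t^{m/2})}$ — schematically, involving $g(t)^{-1}$, $h_t$, and first derivatives of $\log h_t$ — converges pointwise to the corresponding expression for $g_\infty$. This is immediate from Lemma \ref{lm21} and standard elliptic bootstrapping, so there is no real obstacle; the corollary is essentially a limiting statement and its content lies entirely in the uniform-in-$t$ bound already secured in Lemma \ref{lm25}.
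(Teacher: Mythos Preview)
Your proposal is correct and follows exactly the approach the paper indicates: the paper states the corollary as an immediate consequence of Lemma~\ref{lm25} together with the local smooth convergence of $\omega(t)$ on $X\setminus D$, and you have simply unpacked that one-line justification in full detail. Your care about the strength of the convergence (that $C^\infty_{\mathrm{loc}}$ from Lemma~\ref{lm21} is more than enough to pass the nonlinear norm expression to the limit) and your remark that the supremum is understood over $X\setminus D$ are both appropriate elaborations of what the paper leaves implicit.
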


We remark that the constant $C$ in (\ref{pw1}, \ref{pw3}, \ref{pw2}, \ref{pw4}) depends on $m$ and $\sigma$.

\begin{definition} \label{def21}

Let $\mathcal{R}_X$ be the set of all points $p$ on $X$ such that all $\mu$-jets at $p$ are globally generated by some power of $K_X$ for $|\mu| \leq 2$, where $\mu=(\mu_1, ..., \mu_n)\in \mathbb{Z}^n$ is nonnegative. 

\end{definition}

In local holomorphic coordinates $z$ with $p=0$, the $\mu $-jets at $p$ are given by $\prod_{i =1}^n z_i^{\mu_i}$.

\begin{lemma} \label{lm26}

 $\mathcal{R}_X$ is open in $X$ and $\varphi(t)$ converges smoothly to $\varphi_\infty$ on $\mathcal{R}_X$ as $t\rightarrow \infty$. In particular, $\varphi_\infty \in C^\infty(\mathcal{R}_X)$.

\end{lemma}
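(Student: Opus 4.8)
The plan is to prove $\mathcal{R}_X$ is open, then upgrade the local $C^\infty$ convergence from Lemma~\ref{lm21} (which only applies away from $D$) to all of $\mathcal{R}_X$. For openness: if $p \in \mathcal{R}_X$, then for every multi-index $\mu$ with $|\mu| \le 2$ there is a section of some power $m_\mu K_X$ realizing that $\mu$-jet at $p$; in particular (taking $\mu = 0$) there is a section of $m_0 K_X$ nonvanishing at $p$, and the derivative conditions for $|\mu| = 1, 2$ are open conditions in $p$ once the zeroth-order section is nonvanishing nearby. After passing to a common power (replace each $m_\mu$ by a fixed large $m$ divisible by all of them, using that $H^0(X, m K_X)$ surjects onto the relevant jet space once it does for $m_\mu \mid m$), one sees the jet-generation condition is satisfied on a whole neighborhood of $p$, so $\mathcal{R}_X$ is open.

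For the convergence statement, the key point is that near a point $p \in \mathcal{R}_X$ we have a section $\sigma_0 \in H^0(X, mK_X)$ with $\sigma_0(p) \neq 0$, and by Corollary~\ref{co22} applied to $\sigma_0$ we get $\varphi_\infty \ge m^{-1}\log|\sigma_0|^2_{h_\chi^m} - C$; since $|\sigma_0|^2_{h_\chi^m}$ is bounded below by a positive constant in a neighborhood $U$ of $p$, this gives $\varphi_\infty \ge -C'$ on $U$, i.e. $\varphi_\infty$ is locally bounded on $\mathcal{R}_X$. Combined with the upper bound $\varphi \le C$ from Lemma~\ref{prop21}(1) and the monotone decrease of $\varphi + C_2 e^{-t/2}$ (from the proof of Lemma~\ref{lm22}), one gets a uniform two-sided $L^\infty$ bound on $\varphi(t)$ on $U$ for all $t$. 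The Monge-Ampère flow \eqref{maflow} then has a uniformly bounded right-hand side (its $e^{-t/2}$-corrected time derivative is monotone and $\varphi$ is bounded, so $\ddt{\varphi}$ is controlled), and the reference volume form $\Omega$ is smooth and positive on $U$; thus the complex Monge-Ampère equation $\omega(t)^n = e^{\ddt{\varphi} + \varphi}\Omega$ has uniformly bounded data on $U$.

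From here the standard local a priori estimate machinery applies: the $L^\infty$ bound on $\varphi$ together with the bounded Monge-Ampère density gives, via the local second-order (Chern-Lu / Aubin-Yau type) estimate on a slightly smaller ball, a uniform bound $C^{-1}\omega_0 \le \omega(t) \le C\omega_0$, then the local third-order (Calabi / Schauder) estimates of \cite{SW, PSeS} give uniform $C^{2,\alpha}$ and hence $C^k$ bounds for $\varphi(t)$ on compact subsets of $\mathcal{R}_X$, exactly as in Lemma~\ref{lm21} but now the "bad set" $D$ is replaced by the complement of $\mathcal{R}_X$. Since $\ddt{\varphi} \to 0$ locally uniformly (this follows on $U$ from the same $\Box_t$ argument as in Lemma~\ref{lm22}, once $\omega(t)$ is locally uniformly comparable to $\omega_0$, giving $n - \tr_\omega(\omega_0)$ bounded and the quantity $(e^t-1)\ddt{\varphi} - \varphi$ controlled), and since $\varphi(t) \to \varphi_\infty$ in $C^0$ on $U$ by the monotone convergence, the uniform $C^k$ bounds and interpolation promote this to smooth convergence $\varphi(t) \to \varphi_\infty$ on $\mathcal{R}_X$; in particular $\varphi_\infty \in C^\infty(\mathcal{R}_X)$.

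The main obstacle is establishing the local $C^0$ bound for $\varphi(t)$ \emph{uniformly in $t$} on neighborhoods of points of $\mathcal{R}_X$: the lower bound comes cleanly from Corollary~\ref{co22} only because that corollary was proven using the global structure of the flow (Lemma~\ref{lm24} and its lift to a log resolution), so the real content is that jet-generation at $p$ forces $\varphi_\infty$ — and then, via monotonicity, the whole family $\varphi(t)$ — to stay bounded near $p$. Once that local $L^\infty$ bound is in hand, everything else is the now-routine local parabolic Monge-Ampère regularity theory, and the openness of $\mathcal{R}_X$ is elementary. One subtlety to be careful about: jet-generation of order up to $2$ (rather than just order $0$) is what will later be needed for the partial $C^0$/embedding arguments, but for \emph{this} lemma only the order-$0$ part (nonvanishing section) is used to get the lower bound on $\varphi_\infty$; the higher-jet part of the definition is recorded here for use downstream.
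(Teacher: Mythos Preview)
Your approach diverges from the paper's at the crucial $C^2$ step, and there is a genuine gap there. The paper does \emph{not} proceed via a purely local interior Laplacian estimate from a $C^0$ bound. Instead it uses the jet-generation hypothesis to produce, for some $m$, a basis $\{\sigma_j\}$ of $H^0(X,mK_X)$ that gives a local \emph{embedding} near $p$; the pullback Fubini--Study form $\theta=m^{-1}\ddbar\log\sum_j|\sigma_j|^2$ is then an honest K\"ahler metric in a neighborhood of $p$. One first shows that the barrier $H=\varphi+\dot\varphi-m^{-1}\log\sum_j|\sigma_j|^2_{h_\chi^m}$ satisfies $\Box_t H=\tr_\omega(\theta)\ge 0$ and tends to $+\infty$ along the base locus of $\{\sigma_j\}$, so a \emph{global} maximum principle bounds $H$ from below. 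Then the parabolic Schwarz lemma applied to $G=\log\tr_\omega(\theta)-AH$ gives $\tr_\omega(\theta)\le C\big(\sum_j|\sigma_j|^2_{h_\chi^m}\big)^{A/m}$, which together with the upper bound on $\omega^n$ yields $\omega(t)\approx\theta$ near $p$; higher-order estimates then follow as you say.

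By contrast, your argument attempts to pass from a local $L^\infty$ bound on $\varphi(t)$ and a bound on the volume form directly to a local bound $C^{-1}\omega_0\le\omega(t)\le C\omega_0$. Two issues: (i) you have not actually bounded $\dot\varphi$ from below on $U$ --- monotonicity of $\varphi+C_2e^{-t/2}$ gives only $\dot\varphi\le(C_2/2)e^{-t/2}$, so the volume form lower bound is missing; (ii) even granting two-sided volume bounds, there is no ``standard'' purely local interior $C^2$ estimate for the parabolic complex Monge--Amp\`ere equation that inputs only local $C^0$ and volume control --- the results you cite (\cite{SW,PSeS}) assume a metric equivalence and then bootstrap, and the global Tsuji-type argument of Lemma~\ref{prop21}(3) uses a barrier blowing up along $D$, which is useless if $p\in D$. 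The paper's trick is precisely to replace the bad barrier $|\sigma_D|^{-2\lambda}$ by one built from $\sum_j|\sigma_j|^2$, whose singular set avoids $p$. Consequently your closing remark is also incorrect: generation of $1$-jets (not merely $0$-jets) is used in an essential way in this lemma, since without it $\theta$ is not positive near $p$ and the Schwarz-lemma step collapses.
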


\begin{proof} Obviously, $\mathcal{R}_X$ is open. Let $p\in \mathcal{R}_X$. Then there exist $m>0$ and a basis $\{ \sigma_j \}_{j=0}^{d_m} $ of $ H^0(X, mK_X)$ such that $\{ \sigma_j\}_j$ gives a local embedding in a small neighborhood of $p$ into a projective space $\mathbb{CP}^{d_m}$. 
Let $\theta$ be the pullback of the Fubini-Study metric. 
First we note that 
$$\Box_t~ \dot{\varphi} = -e^{-t} tr_{\omega} (\omega_0 - \chi) - \dot \varphi \leq - \dot \varphi.$$
Therefore by maximum principle, $\dot\varphi$ is uniformly bounded above. 
Then we consider
$$H = \varphi + \dot\varphi - m^{-1} \log (\sum_{j=0}^{d_m} |\sigma_j|^2_{h_\chi^m}) .$$
Then outside of the common base locus of $\{ \sigma_j \}_j$, 
$$ \Box_t~ H = tr_\omega(\theta)\geq 0. $$
Since $H =\infty$ along the base locus of $\{\sigma_j\}_j$, from the maximum principle, $H$ is uniformly bounded below. 
Now let 
$$G = \log tr_\omega(\theta) - A H. $$
Then for sufficiently large $A$, applying the argument for the the parabolic Schwarz lemma in \cite{ST1, ST2}, we have 
$$\Box_t~ G \leq  -tr_\omega(\theta) + C$$
 outside  the base locus of $\{\sigma_j\}_j$. We also note that for sufficiently large $A>0$, $G$ is smooth outside  the base locus of $\{\sigma_j\}_j$ and tends to $-\infty$ along the base locus of $\{\sigma_j\}_j$.  At the maximum point, $tr_\omega(\theta) \leq C$ and so $G$ is uniformly bounded above. This implies that
$$tr_\omega(\theta) \leq C e^{AH} \leq C' (\sum_{j=0}^{d_m}  |\sigma_j|^2_{h_\chi^m})^{A/m}. $$
On the other hand, $\omega^n$ is bounded above uniformly  away from the base locus of the linear system $|mK_X|$ for fixed $m$. Also $\theta^n$ is uniformly equivalent to $\Omega$ near $p$. 
Therefore $\omega$ is equivalent to $\theta$ near $p$. The third and higher order local regularity near $p$ is achieved by standard argument and this completes the proof of the lemma.

\end{proof}

\section{Riemannian geometric limits}

In this section, we will apply the Cheeger-Colding theory \cite{CC1, CC2, CCT} for degeneration of Riemannian manifolds with Ricci curvature bounded below,  the work of Tian-Wang \cite{TW} for almost K\"ahler-Einstein metrics, and a local $L^2$-estimates to study the Riemannian structure of $(\mathcal{R}_X, g_\infty)$ and its metric  completion.

We first pick a K\"ahler form 
\begin{equation}\label{refmc}
\omega_0' = \chi - \epsilon_0 Ric(h_{D, \epsilon_0})
\end{equation}
 for some sufficiently small $\epsilon_0>0$.  We now consider the following family of Monge-Ampere equations for $k\in \mathbb{Z}^+$.
\begin{equation}\label{maeq}
( (1-e^{-k} )\chi + e^{-k}\omega_0' + (1-e^{-k})\ddbar \varphi_k)^n = e^{\varphi_k} \Omega.
\end{equation}
Let 
$\omega_k = \chi + \ddbar \varphi_k + (1-e^{-k} )^{-1} e^{-k} \omega_0'  $ and $g_k$ be the corresponding K\"ahler metric. Then the  curvature equation for  $g_k$ is given by 
\begin{equation}\label{curveq}
Ric(g_k) = -  g_k + (1-e^{-k})^{-1}e^{-k} g_0'.
\end{equation} 

The following estimates follow by similar estimates from Section 2, using elliptic argument instead of parabolic estimates. 

\begin{lemma} \label{lm31} We have the following uniform estimates.

\begin{enumerate}

\item  There exists $C>0$  such that for all $k>0$, , we have on $X$
$$\sup_X \varphi_k\leq C, $$

\item for any $\epsilon>0$, there exists $C_\epsilon>0$ such that for all $k >0$, we have on $X$
$$ \varphi_k \geq \epsilon \log |\sigma_D|^2_{h_D} - C_\epsilon,$$
where $h_D$ is a fixed smooth hermitian metric on $D$, 
\medskip

\item there exist $\lambda, C>0$ such that for all $k>0$, we have on $X$
$$tr_{\omega_0}(\omega(t)) \leq C |\sigma_D|_{h_D}^{-2\lambda} . $$

\item for any $l>0$ and compact set $K \subset \subset X \setminus D$, there exists $C_{l, K}>0$ such that for all $k>0$, 
$$ \| \varphi \|_{C^l(K)} \leq C_{l,K}. $$

\end{enumerate}

\end{lemma}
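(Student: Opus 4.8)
The plan is to mirror the parabolic estimates of Section 2 in the elliptic setting of equation (\ref{maeq}), replacing the heat operator $\Box_t$ by the Laplacian $\Delta_k$ of $g_k$ throughout, and using the structure of the curvature equation (\ref{curveq}). For statement (1), the upper bound $\sup_X \varphi_k \leq C$ follows by evaluating (\ref{maeq}) at a maximum point of $\varphi_k$: there $\ddbar \varphi_k \leq 0$, so $e^{\varphi_k}\Omega = ((1-e^{-k})\chi + e^{-k}\omega_0' + (1-e^{-k})\ddbar\varphi_k)^n \leq ((1-e^{-k})\chi + e^{-k}\omega_0')^n \leq C \Omega$ uniformly in $k$ (since $\chi$ and $\omega_0'$ are fixed and the coefficients are bounded), giving $\varphi_k \leq C$.

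For statements (2) and (3), I would run Tsuji's trick exactly as in the proof of Lemma \ref{prop21}, but elliptically. Fix $\epsilon \in (0,\epsilon_0]$ and consider $\varphi_k - \epsilon \log|\sigma_D|^2_{h_{D,\epsilon}}$; since $\chi - \epsilon \operatorname{Ric}(h_{D,\epsilon})$ is K\"ahler, at an interior minimum of this quantity one can bound the complex Monge-Amp\`ere term from below and deduce the lower bound $\varphi_k \geq \epsilon \log|\sigma_D|^2_{h_D} - C_\epsilon$ after comparing the two hermitian metrics $h_{D,\epsilon}$ and $h_D$. For (3), the second-order estimate, I would apply the maximum principle to $\log \operatorname{tr}_{\omega_0}(\omega_k) - A(A\varphi_k - \log|\sigma_D|^2_{h_{D,1/A}})$ for $A$ large; the parabolic Schwarz-lemma computation of $\Box_t \log \operatorname{tr}_{\omega_0}(\omega)$ from \cite{ST1, ST2} has a standard elliptic analogue $\Delta_k \log \operatorname{tr}_{\omega_0}(\omega_k) \geq -C \operatorname{tr}_{\omega_k}(\omega_0) - C$, and the extra Ricci term $(1-e^{-k})^{-1}e^{-k} g_0'$ in (\ref{curveq}) is uniformly bounded and contributes only a harmless constant, so the argument closes verbatim.

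For statement (4), once (2) and (3) give a uniform two-sided bound on $\omega_k$ on any compact $K \subset\subset X \setminus D$ (the upper bound from (3) since $|\sigma_D|_{h_D}$ is bounded below on $K$, the lower bound since $\omega_k^n = e^{\varphi_k}\Omega$ is bounded below there using (2)), the higher-order estimates follow from the standard Calabi/Evans-Krylov third-order estimate — either the local version of \cite{SW} or the weighted global version of \cite{PSeS} — followed by bootstrapping via local elliptic Schauder theory, exactly as in Lemma \ref{lm21}.

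The main obstacle is not any single step but the need to confirm that the non-constant, $k$-dependent coefficients $(1-e^{-k})$ and $(1-e^{-k})^{-1}e^{-k}$ stay in a fixed compact range bounded away from $0$ and $\infty$ for all $k \in \mathbb{Z}^+$ (indeed $(1-e^{-k}) \in [1-e^{-1}, 1)$ and $(1-e^{-k})^{-1}e^{-k} \in (0, e^{-1}/(1-e^{-1})]$), so that every constant produced by the maximum-principle arguments is genuinely uniform in $k$; this is where one must be slightly careful relative to the parabolic case, but it is a bookkeeping matter rather than a conceptual one. The genuinely new input compared to Section 2 — replacing $\Box_t$ by $\Delta_k$ and absorbing the auxiliary Ricci term from (\ref{curveq}) — is routine, which is why the lemma can be asserted with only the remark that the proof follows ``by similar estimates from Section 2, using elliptic argument instead of parabolic estimates.''
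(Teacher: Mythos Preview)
Your proposal is correct and is precisely the approach the paper intends: the paper gives no detailed proof of this lemma, stating only that ``the following estimates follow by similar estimates from Section 2, using elliptic argument instead of parabolic estimates,'' and your sketch is exactly the natural elliptic transcription of Lemma \ref{prop21} and Lemma \ref{lm21} (maximum principle for (1), Tsuji's trick for (2)--(3), Calabi/Evans--Krylov plus bootstrapping for (4)), together with the correct observation that the $k$-dependent coefficients remain in a fixed compact range so all constants are uniform.
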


From Lemma \ref{lm31}, $\varphi_k$ converges to some $\varphi'_\infty\in PSH(X, \chi)\cap C^\infty(X\setminus D)$ solving $(\chi+ \ddbar \varphi'_\infty)^n = e^{\varphi'_\infty} \Omega$. In particular, by the uniqueness from Lemma \ref{lm23}, $\varphi'_\infty =\varphi_\infty$, where $\varphi_\infty$ is the limiting potential from the Monge-Ampere flow (\ref{maflow}).

We will now verify in the following lemma for the almost K\"ahler-Einstein condition introduced in \cite{TW}. 
\begin{lemma} \label{locesk} Let $g_k $ be the solution of equation (\ref{maeq}) for $k\in \mathbb{Z}^+$. Then $g_k$ satisfies the following almost K\"ahler-Einstein conditions.

\begin{enumerate}

\item $Ric(g_k) \geq -g_k$, 
\smallskip

\item there exists $p\in X \setminus D$ and $r_0, \kappa >0$ such that for all $k$, 
$$B_{g_k}(p, r_0)\subset\subset X\setminus E, ~Vol(B_{g_k}(p, r_0) ) \geq \kappa,$$ 

\item  Let $g_k(t)$ be the solution of the normalized K\"ahler-Ricci flow 
$$\ddt{g_k(t)} = -Ric(g_k(t)) - g_k(t), ~ g_k(0) = g_k. $$ Then 
$$ \lim_{k \rightarrow \infty} \int_0^1 \int_{X} \left| R(g_k(t)) + n \right| dV_{g_k(t)} dt =0. $$

\end{enumerate}

\end{lemma}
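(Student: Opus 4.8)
The plan is to verify the three almost Kähler--Einstein conditions of Tian--Wang \cite{TW} one at a time, drawing on the a priori estimates already established.

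\textbf{Condition (1).} This is immediate from the curvature equation \eqref{curveq}: since $e^{-k}>0$ and $g_0'$ is the Kähler metric associated to $\omega_0' = \chi - \epsilon_0 Ric(h_{D,\epsilon_0})$, which is positive by the choice in \eqref{refmc}, the extra term $(1-e^{-k})^{-1}e^{-k}g_0'$ is nonnegative, hence $Ric(g_k) = -g_k + (1-e^{-k})^{-1}e^{-k}g_0' \geq -g_k$ for every $k$.

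\textbf{Condition (2) (noncollapsing at a fixed point).} I would fix a point $p \in X \setminus (E \cup D)$ in the open set $\mathcal{R}_X$ where, by Lemma \ref{lm26} (more precisely by the elliptic analogue, Lemma \ref{lm31}(4) plus the argument of Lemma \ref{lm26} adapted to \eqref{maeq}), the potentials $\varphi_k$ converge in $C^\infty_{loc}$ and $g_k \to g_\infty'=g_\infty$ smoothly near $p$. Consequently there are uniform two-sided bounds $C^{-1}\omega_0 \le \omega_k \le C\omega_0$ on a fixed small coordinate ball around $p$; choosing $r_0>0$ small enough that this coordinate ball contains the $g_k$-geodesic ball $B_{g_k}(p,r_0)$ and that $B_{g_k}(p,r_0) \subset\subset X \setminus E$, the volume lower bound $Vol(B_{g_k}(p,r_0)) \ge \kappa$ follows from the uniform equivalence of metrics. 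One should double check that $r_0$ can be chosen uniformly in $k$, but this is exactly what the uniform $C^0$ (and higher) bounds near $p$ give.

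\textbf{Condition (3) (vanishing of the integrated scalar curvature deviation along the flow).} This is the main obstacle and requires an honest argument. Here $g_k(t)$ is the normalized Kähler--Ricci flow started from $g_k$; writing $\omega_k(t) = \chi + e^{-t}(\omega_k(0) - \chi) + \ddbar \varphi_{k,t}$ with $\varphi_{k,t}$ solving the flow \eqref{maflow} but with initial data $\varphi_k$, one has along the flow the identity $R(g_k(t)) + n = -\Delta_{g_k(t)}\big(\ddt{\varphi_{k,t}}\big) + \text{(lower order)}$; more usefully, integrating the evolution $\Box_t(\ddt{\varphi_{k,t}}) = -\ddt{\varphi_{k,t}} - e^{-t}tr_{\omega_k(t)}(\omega_k(0)-\chi)$ and the trace identity, the quantity $\int_X |R(g_k(t)) + n|\, dV_{g_k(t)}$ is controlled by $\int_X e^{-t}\, tr_{\omega_k(t)}(\omega_k(0)-\chi)\, dV_{g_k(t)}$ together with $\int_X |\ddt{\varphi_{k,t}}|\, dV_{g_k(t)}$, and the first of these is a \emph{cohomological} quantity: $\int_X tr_{\omega_k(t)}(\omega_k(0)-\chi)\, \omega_k(t)^n = n\int_X (\omega_k(0)-\chi)\wedge \omega_k(t)^{n-1}$, which is bounded uniformly in $k$ and $t$ because all the classes involved are fixed (or converge) as $k\to\infty$ and $\omega_k(0)-\chi = (1-e^{-k})^{-1}e^{-k}\omega_0' \to 0$ in cohomology. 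Thus the $e^{-t}$ factor, or rather the fact that $[\omega_k(0)] \to [\chi]$, forces the $t$-integral over $[0,1]$ to tend to $0$. The delicate point is handling $\int_X |\ddt{\varphi_{k,t}}|\, dV_{g_k(t)}$ — I would bound $\ddt{\varphi_{k,t}}$ above uniformly (as in Lemma \ref{lm26}, $\Box_t \dot\varphi \le -\dot\varphi$ gives a uniform upper bound), and control the negative part via the monotonicity/energy estimates for the flow (e.g. the functional $\int_0^1\int_X |\dot\varphi|\,dV\,dt \to 0$ because $\varphi_{k,t}$ starts $C^0$-close to $\varphi_\infty$, the stationary solution, uniformly in $k$, using Lemma \ref{lm31} and Lemma \ref{lm23}). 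Assembling these estimates and sending $k\to\infty$ yields condition (3); the crux is making the energy-type bound on $\dot\varphi_{k,t}$ uniform in $k$, which is where the uniqueness in Lemma \ref{lm23} and the uniform estimates of Lemma \ref{lm31} do the real work.
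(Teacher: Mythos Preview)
Your arguments for (1) and (2) are fine and match the paper's (which simply cites the curvature equation \eqref{curveq} and Lemma \ref{lm31}).

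For (3), however, you are missing the key idea and the route you sketch does not close. The paper's observation is that $R(g_k(t))+n \geq 0$ for all $t\geq 0$: by (1) one has $R(g_k(0))\geq -n$, and along the normalized K\"ahler--Ricci flow the minimum of the scalar curvature is nondecreasing (standard maximum principle, since $\Box_t R = |\mathrm{Ric}|^2 + R \geq \tfrac{1}{n}R^2+R$). Once the absolute value is removed, $\int_X (R(g_k(t))+n)\, dV_{g_k(t)} = -\tfrac{d}{dt}\mathrm{Vol}_{g_k(t)}(X)$ is purely cohomological, and integrating over $t\in[0,1]$ gives an expression in $[\omega_k(t)]^n$ that tends to $0$ as $k\to\infty$ because $[\omega_k]-[\chi]=(1-e^{-k})^{-1}e^{-k}[\omega_0']\to 0$.

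By contrast, your proposed decomposition introduces a term $\int_X |\dot\varphi_{k,t}|\, dV_{g_k(t)}$ and asserts it tends to $0$ because ``$\varphi_{k,t}$ starts $C^0$-close to $\varphi_\infty$''. This is not correct: with the natural normalization $\psi_k(0)=0$ one has $\dot\psi_k(0)=\log(\omega_k^n/\Omega)=\varphi_k - n\log(1-e^{-k})$, which converges to $\varphi_\infty$, not to $0$, so $\int_X|\dot\psi_k(0)|\,dV_{g_k}$ has no reason to be small. (Note also that $\int_X \Delta\dot\psi\, dV=0$, so the $\dot\varphi$ term does not even enter $\int_X(R+n)\,dV$; it only appears because you are trying to handle the absolute value pointwise.) The fix is not to estimate $\dot\varphi$ at all but to use the sign $R+n\geq 0$ as above.
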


\begin{proof} (1) and (2) follow easily from equation (\ref{curveq}) and Lemma \ref{lm31}. Notice that the minimum of the scalar curvature is non decreasing along the Ricci flow while $R(g_k(0)) > -n$. Therefore
\begin{eqnarray*}
&&\int_0^1\int_{X} |R(g_k(t))+n|dV_{g_k(t)}dt \\
 &\leq & \int_{X} \int_0^1( R(g_k(t))+n) dV_{g_k(t)} dt \\
&=& - \epsilon_0\int_0^1 e^{-t-k} (1-e^{-k})^{-1} [D'] \cdot \left( [\chi] -\epsilon_0 e^{-t-k} (1-e^{-k})^{-1}[D'] \right)^{n-1} dt 
\end{eqnarray*}
converges to $0$ as $k\rightarrow \infty$.

\end{proof}

We then apply the main results of Tian-Wang \cite{TW} to obtain the following proposition. 

\begin{proposition} \label{ghlimgt} Let $(X, p, g_k)$ be the almost K\"ahler-Einstein manifolds in Lemma \ref{locesk}. Then $(X, p, g_k)$ converges to  a metric length space $(X_\infty, p_\infty, d_\infty)$ satisfying 

\begin{enumerate}

\item $\R$, the regular set of $X_\infty$, is a smooth open dense convex set in $X_\infty$, 

\smallskip

\item the limiting metric $d_\infty$ induces a  smooth K\"ahler-Einstein metric $g_{KE}$ on $\R$ satisfying $Ric(g_{KE}  ) = - g_{KE} $,  

\smallskip

\item the singular set $\mathcal{S}$ has Hausdorff dimension no greater than $2n-4$.

\end{enumerate}

\end{proposition}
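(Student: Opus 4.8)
## Proof Proposal for Proposition 3.3

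The plan is to verify the three hypotheses of the structure theorem of Tian--Wang \cite{TW} and then simply invoke it. Lemma \ref{locesk} has already packaged the key analytic input: the sequence $(X,p,g_k)$ consists of almost K\"ahler--Einstein manifolds in the precise sense of \cite{TW}, namely $\ric(g_k)\geq -g_k$ uniformly, a uniform non-collapsing lower volume bound $\mathrm{Vol}(B_{g_k}(p,r_0))\geq\kappa$ at a fixed basepoint, and the vanishing in the limit of the spacetime integral $\int_0^1\int_X|R(g_k(t))+n|\,dV_{g_k(t)}\,dt\to 0$ along the normalized K\"ahler--Ricci flow starting from $g_k$. The first step, then, is to observe that these are exactly the standing assumptions under which \cite{TW} proves a Cheeger--Colding type compactness and regularity theorem in the K\"ahler setting.

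Granting this, the second step is to extract the Gromov--Hausdorff limit. By Gromov's precompactness theorem (using $\ric\geq -g_k$ and the uniform volume lower bound to rule out collapsing, together with a uniform diameter bound coming from the fact that all $g_k$ represent cohomology classes converging to the fixed class $[\chi]$ and $\int_X\omega_k^n$ is uniformly bounded), a subsequence of $(X,p,g_k)$ converges in the pointed Gromov--Hausdorff topology to a compact metric length space $(X_\infty,p_\infty,d_\infty)$. Then the Cheeger--Colding--Tian theory, as refined in \cite{TW} using the K\"ahler structure and the almost-Einstein condition, gives the stratification $X_\infty=\R\sqcup\Sc$: the regular set $\R$ is open, dense, and carries a genuine smooth K\"ahler metric $g_{KE}$, while the singular set $\Sc$ is closed with Hausdorff codimension at least $4$, i.e.\ $\dim_{\mathcal H}\Sc\leq 2n-4$. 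Convexity of $\R$ is the statement that any two regular points are joined by a minimizing geodesic lying entirely in $\R$; this is part of the Cheeger--Colding package once one knows tangent cones are metric cones and the codimension-$4$ bound holds, and it is established in \cite{TW} in this setting.

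The third step identifies the limiting metric as K\"ahler--Einstein with the correct normalization. Because the curvature integral $\int_0^1\int_X|R(g_k(t))+n|\,dV\to 0$ and the minimum of scalar curvature is monotone along the flow (used already in Lemma \ref{locesk}), one obtains in the limit that the scalar curvature of $g_{KE}$ equals $-n$; combined with the fact that each $g_k$ is K\"ahler with $\ric(g_k)=-g_k+(1-e^{-k})^{-1}e^{-k}g_0'$ and the error term $(1-e^{-k})^{-1}e^{-k}g_0'\to 0$, the limiting Einstein equation $\ric(g_{KE})=-g_{KE}$ follows on $\R$ by passing to the limit in the curvature equation on regions where one has uniform higher-order control — which, by Lemma \ref{lm31}(4), one does on compact subsets of $X\setminus D$, and more generally on $\R$ by the $\epsilon$-regularity theorem of \cite{TW}.

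The main obstacle is conceptual rather than computational: it lies in checking that the hypotheses of \cite{TW} are met \emph{exactly} as stated there — in particular that the notion of ``almost K\"ahler--Einstein'' in \cite{TW} is the flow-based integral condition verified in Lemma \ref{locesk}, and that their conclusions (smooth convex regular set, codimension-$4$ singular set, limiting K\"ahler--Einstein metric) are stated for a \emph{sequence} of such manifolds rather than a single flow. Once the dictionary with \cite{TW} is fixed, the proposition is essentially a citation; the substantive work was done in establishing Lemma \ref{locesk} and the a priori estimates of Section 2. A secondary technical point to be careful about is the identification of the limit metric space $X_\infty$ with (the metric completion of) $(\R_X, g_\infty)$ from Section 2 — but this identification is deferred to later sections and is not needed for the present statement, which only asserts the abstract Cheeger--Colding structure of $(X_\infty,d_\infty)$.
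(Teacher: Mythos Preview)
Your approach is essentially the same as the paper's: the proposition is obtained by direct citation of the main structure theorem of Tian--Wang \cite{TW}, once Lemma \ref{locesk} has verified the almost K\"ahler--Einstein hypotheses. The paper in fact gives no further argument beyond that sentence.

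There is, however, a genuine error in your second step. You claim a uniform diameter bound ``coming from the fact that all $g_k$ represent cohomology classes converging to $[\chi]$ and $\int_X \omega_k^n$ is uniformly bounded'', and then assert that the limit $(X_\infty, d_\infty)$ is compact. Neither claim is justified: a uniform volume upper bound together with $\ric \geq -1$ does \emph{not} imply a diameter bound, and the paper explicitly does not assume one --- indeed, Section 4 opens by treating the case $\mathrm{diam}_{g_k}(X)\to\infty$, which is precisely the situation where Kawamata's theorem is not being presupposed. The correct statement is that Gromov precompactness gives \emph{pointed} Gromov--Hausdorff convergence (using only the Ricci lower bound), and the Tian--Wang theory applies in this pointed, possibly non-compact, setting. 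Drop the diameter and compactness claims; they are both false as stated and unnecessary for invoking \cite{TW}.
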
 

The rest of the section is to prove that the regular part $\mathcal{R}$ coincides with $\mathcal{R}_X$ and $g_{KE}$ coincides with $g_\infty$, the limiting K\"ahler-Einstein metric from the K\"ahler-Ricci flow.

\begin{definition}

Let $\mathcal{S}_X$ be the set of points $q_\infty $ in $X_\infty$ such that there exist a sequence of points $q_k\in (X\setminus \mathcal{R}_X, g_k)$ converging to $q_\infty$ in Gromov-Hausdorff sense. 

\end{definition}

By taking a diagonal sequence, it is obvious that $\mathcal{S}_X$ is closed.  The following lemma  is the pointed version of Theorem 4.1 in \cite{RZ} due to Rong-Zhang, establishing a local isometry and global homeomorphism between $\mathcal{R}_X$ and $X_\infty\setminus \mathcal{S}_X$. 

\begin{lemma} \label{lmlochom} There exists a continuous surjection
$$f: \overline{ (  \mathcal{R}_X,  g_\infty)} \rightarrow (X_\infty, d_\infty)$$
such that 
$$ f: (\mathcal{R}_X,  g_\infty) \rightarrow (X_\infty\setminus \mathcal{S}_X, d_\infty)$$
is a homeomorphism and a local isometry, where $\overline{ (  \mathcal{R}_X, g_\infty)}$ is the metric completion of $ \mathcal{R}_X$ with respect to the smooth limiting metric $g_\infty$.

\end{lemma}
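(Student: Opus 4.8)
\emph{Overall plan.} I would obtain Lemma~\ref{lmlochom} as the pointed analogue of Theorem~4.1 of Rong--Zhang \cite{RZ}, so the plan is to replay that argument with the three ingredients already in place: the smooth convergence $g_k\to g_\infty$ on $\mathcal{R}_X$ (Lemma~\ref{lm26}, together with $\varphi'_\infty=\varphi_\infty$ from Lemma~\ref{lm23}), the non-collapsing at $p$ and the Ricci lower bound (Lemma~\ref{locesk}), and the uniform estimates of Section~2. First I would construct $f$ on $\mathcal{R}_X$: fixing $p\in\mathcal{R}_X$ as base point, for $q\in\mathcal{R}_X$ I join $p$ to $q$ by a smooth curve inside $\mathcal{R}_X$; since $g_k\to g_\infty$ in $C^\infty_{loc}(\mathcal{R}_X)$ the $g_k$-lengths of such curves converge to their $g_\infty$-lengths, which gives $\limsup_k d_{g_k}(q,q')\le d_{g_\infty}(q,q')$ for any $q,q'\in\mathcal{R}_X$, where the right side is the intrinsic distance inside $\mathcal{R}_X$. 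Combining this with the smooth convergence of $g_k$ on a geodesic ball $B_{g_\infty}(q,r)\subset\subset\mathcal{R}_X$ and the structure of the limit (Proposition~\ref{ghlimgt}), a standard Cheeger--Colding argument shows the constant sequence $q\in X$ has a well-defined Gromov--Hausdorff limit $f(q)\in X_\infty$, independent of the approximating maps, with $d_\infty(f(q),f(q'))\le d_{g_\infty}(q,q')$. Hence $f\colon(\mathcal{R}_X,g_\infty)\to(X_\infty,d_\infty)$ is $1$-Lipschitz and extends continuously to a map $\bar f$ on the metric completion $\overline{(\mathcal{R}_X,g_\infty)}$.

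\emph{Local isometry and the homeomorphism onto $X_\infty\setminus\mathcal{S}_X$.} Next I would use the smooth convergence to upgrade $f|_{\mathcal{R}_X}$ to a local isometry: for each $q\in\mathcal{R}_X$ a small ball $B_{g_\infty}(q,\epsilon_q)$ is the smooth Cheeger--Colding limit of the corresponding $g_k$-balls, hence $f$ maps it isometrically onto an open ball of $X_\infty$ lying in the regular set $\mathcal{R}$ and carrying $g_{KE}$; in particular $f(\mathcal{R}_X)\subset\mathcal{R}$ and $f$ is open. Surjectivity onto $X_\infty\setminus\mathcal{S}_X$ is then immediate from the definition of $\mathcal{S}_X$: a point $x\notin\mathcal{S}_X$ is a limit of points $x_k$ which eventually lie in $\mathcal{R}_X$, and the local smooth structure near $x\in\mathcal{R}$ forces the $x_k$ to converge in $(\mathcal{R}_X,g_\infty)$ to some $y$ with $f(y)=x$. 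For injectivity I would run the volume bookkeeping of \cite{RZ}: $X\setminus\mathcal{R}_X$ is a proper analytic subvariety (for $m$ large, $|mK_X|$ separates $2$-jets off a Zariski-closed proper subset, $K_X$ being big), so $\mathrm{Vol}_{g_\infty}(\mathcal{R}_X)=\tfrac1{n!}\int_X\omega_\infty^n$, which by volume continuity under the convergence of Lemma~\ref{locesk} equals $\mathrm{Vol}_{d_\infty}(X_\infty)=\mathrm{Vol}_{d_\infty}(X_\infty\setminus\mathcal{S}_X)$; a surjective local isometry between spaces of equal finite volume is injective, so $f\colon(\mathcal{R}_X,g_\infty)\to(X_\infty\setminus\mathcal{S}_X,d_\infty)$ is a homeomorphism.

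\emph{Surjectivity of $\bar f$ and the main obstacle.} Finally I would show $\bar f$ hits all of $X_\infty$. Since $\varphi_k\le C$ uniformly (Lemma~\ref{lm31}), the $g_k$-volume of a neighbourhood $U$ of the subvariety $X\setminus\mathcal{R}_X$ is at most $C\int_U\Omega$, which tends to $0$ uniformly in $k$ as $U$ shrinks; hence $\mathcal{S}_X$ carries zero limit measure and, being closed, is nowhere dense, so $f(\mathcal{R}_X)=X_\infty\setminus\mathcal{S}_X$ is dense. The uniform metric estimates of Lemma~\ref{lm31} (the blow-up rate of $g_\infty$ near $D$ and the comparison $tr_{\omega_0}\omega\le C|\sigma_D|_{h_D}^{-2\lambda}$) should control the $g_\infty$-geometry of $\mathcal{R}_X$ enough to see that $\overline{(\mathcal{R}_X,g_\infty)}$ is compact, equivalently that $\bar f$ is proper; then $\bar f(\overline{(\mathcal{R}_X,g_\infty)})$ is closed and contains the dense set $X_\infty\setminus\mathcal{S}_X$, hence equals $X_\infty$. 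The hard part will be this last step: passing from a local isometry on the open set $\mathcal{R}_X$ to a \emph{global} homeomorphism, and identifying the ideal boundary of $(\mathcal{R}_X,g_\infty)$ with $\bar f^{-1}(\mathcal{S}_X)$ — this is exactly where the measure/volume comparison of \cite{RZ} and the uniform estimates of Section~2 (ensuring $X\setminus\mathcal{R}_X$ is truly negligible in the limit) are needed; once these are in hand, the adaptation of the compact statement \cite[Theorem~4.1]{RZ} to the present pointed setting is routine, using only the non-collapsing at $p$ from Lemma~\ref{locesk}.
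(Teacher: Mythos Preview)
Your approach is precisely what the paper does: the paper gives no proof of this lemma and simply refers to it as ``the pointed version of Theorem~4.1 in \cite{RZ}''; you have supplied a reasonable sketch of how that adaptation goes, with the right ingredients (smooth local convergence on $\mathcal{R}_X$, the volume bookkeeping of \cite{RZ} for injectivity, and density of $X_\infty\setminus\mathcal{S}_X$).

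One correction is needed in your surjectivity argument for $\bar f$. You assert that the estimates of Lemma~\ref{lm31} should yield compactness of $\overline{(\mathcal{R}_X,g_\infty)}$, ``equivalently that $\bar f$ is proper.'' Neither the compactness claim nor the asserted equivalence is correct in general. The paper explicitly treats in Section~4 the case $\mathrm{diam}_{g_k}(X)\to\infty$ as the main one; in that case $(X_\infty,d_\infty)$ is non-compact, and since your $f$ is a bijective local isometry onto the dense open set $X_\infty\setminus\mathcal{S}_X$, the completion $\overline{(\mathcal{R}_X,g_\infty)}$ is non-compact as well. The bound $tr_{\omega_0}(\omega_k)\le C|\sigma_D|_{h_D}^{-2\lambda}$ controls the metric pointwise but gives no diameter bound. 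What you actually need is properness of $\bar f$ (strictly weaker than compactness of the domain), and this is obtained by localizing the Rong--Zhang volume comparison to metric balls $B_{d_\infty}(p_\infty,R)$ using the non-collapsing at $p$ from Lemma~\ref{locesk}, exactly as you suggest in your final sentence. So the repair is simply to drop the compactness claim and argue properness directly on bounded balls; the rest of your outline stands.
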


Lemma \ref{lmlochom} immediately implies the following corollary because all tangent cones at each point in $X_\infty\setminus \mathcal{S}_X$ are the flat $\mathbb{C}^n$.

\begin{corollary}

$$\mathcal{S} \subset \mathcal{S}_X.$$

\end{corollary}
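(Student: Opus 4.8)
The plan is to read the inclusion off directly from the local isometry provided by Lemma \ref{lmlochom}, combined with the Cheeger--Colding characterization of the regular set $\mathcal{R}$ underlying Proposition \ref{ghlimgt}. In brief: a local isometry transports tangent cones, the smooth region $(\mathcal{R}_X, g_\infty)$ has only Euclidean tangent cones, and a point of a noncollapsed limit with Euclidean tangent cone of top dimension is regular.

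First I would fix $q_\infty \in X_\infty \setminus \mathcal{S}_X$ and set $q = f^{-1}(q_\infty) \in \mathcal{R}_X$, using that $f$ is a homeomorphism onto $X_\infty \setminus \mathcal{S}_X$. By Lemma \ref{lm26} the potential $\varphi_\infty$ is smooth on $\mathcal{R}_X$ and $\omega_\infty = \chi + \ddbar \varphi_\infty$ is a genuine smooth Kähler--Einstein metric there; hence for $r$ small the ball $B_{g_\infty}(q,r)$ is an ordinary smooth Riemannian ball, and in particular every tangent cone of $(\mathcal{R}_X, g_\infty)$ at $q$ is the flat $\mathbb{C}^n \cong \mathbb{R}^{2n}$.

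Next I would invoke that $f \colon (\mathcal{R}_X, g_\infty) \to (X_\infty \setminus \mathcal{S}_X, d_\infty)$ is a local isometry: for $r$ small it identifies $B_{g_\infty}(q,r)$ isometrically with a neighborhood of $q_\infty$, and since $\mathcal{S}_X$ is closed and $q_\infty \notin \mathcal{S}_X$ this neighborhood is also a neighborhood of $q_\infty$ in the full space $(X_\infty, d_\infty)$. Because a tangent cone at $q_\infty$ is a pointed Gromov--Hausdorff limit of rescalings $(X_\infty, r_i^{-1} d_\infty, q_\infty)$ with $r_i \to 0$, all of which eventually take place inside this identified neighborhood, every tangent cone of $X_\infty$ at $q_\infty$ equals a tangent cone of the smooth manifold at $q$, hence is the flat $\mathbb{C}^n$. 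Finally, since the limit in Proposition \ref{ghlimgt} is a noncollapsed limit of $2n$-real-dimensional almost Kähler--Einstein manifolds (the noncollapsing coming from Lemma \ref{locesk}(2)), its Cheeger--Colding stratification has top stratum $\mathcal{R}$ equal to the set of points all of whose tangent cones are $\mathbb{R}^{2n}$; the previous step places $q_\infty$ in this set, so $X_\infty \setminus \mathcal{S}_X \subset \mathcal{R}$, i.e. $\mathcal{S} = X_\infty \setminus \mathcal{R} \subset \mathcal{S}_X$.

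The only delicate point — and the one I expect to be the main obstacle — is to make precise that ``local isometry'' in Lemma \ref{lmlochom} is strong enough to transport tangent cones: one must use that $q$ is an interior point of the smooth manifold $\mathcal{R}_X$, so that for small $r$ the restriction $d_\infty|_{B(q_\infty, r)}$ coincides with the intrinsic length metric induced by $g_\infty$ (no shortcut through $\mathcal{S}_X$ exists on small scales), whence the rescaled limits can be computed entirely within the smooth region. Everything else is a formal consequence of Lemma \ref{lmlochom}, Lemma \ref{lm26} and Proposition \ref{ghlimgt}.
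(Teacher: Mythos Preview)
Your proposal is correct and follows exactly the paper's own reasoning: the paper derives the corollary in one line from Lemma \ref{lmlochom} by observing that every point of $X_\infty\setminus\mathcal{S}_X$ has flat $\mathbb{C}^n$ as its tangent cone, hence lies in $\mathcal{R}$. Your write-up simply unpacks this sentence, including the caveat about the local isometry transporting tangent cones, which the paper leaves implicit.
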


We then want to show that $\mathcal{S}_X \subset \mathcal{S}$. We look at the parabolic Monge-Ampere equation corresponding to the normalized K\"ahler-Ricci flow
$$\ddt{\psi_k(t)}= \log \frac{ (\chi + e^{-t} (\omega_k - \chi) + \ddbar \psi_k(t) )^n}{\Omega} - \psi_k(t), ~ \psi_k(0)=0.$$ 
%
%
\begin{eqnarray*}
\ddt{g_k(t) } &=&  -e^{-t} (\omega_k  - \chi) + \ddbar \psi_k (t) \\
&=& \ddbar\left\{  \psi_k(t) -  e^{-t}\varphi_k  -  e^{-t-k} (1-e^{-k})^{-1}  \epsilon_0 \log |\sigma_D|^2_{h_{D, \epsilon_0}} \right\}. 
\end{eqnarray*}
Therefore  on $X\setminus A$, we have 
\begin{equation}\label{ricpo}
Ric(g_k(1)) - g_k(1) = \ddt{g_k(1)}  =  \ddbar\left\{  \psi_k(1) -  e^{-1} \varphi_k  -  e^{-k-1} (1-e^{-k})^{-1} \epsilon_0 \log |\sigma_D|^2_{h_{D, \epsilon_0}} \right\}. 
\end{equation}

The following theorem   is due to Demailly \cite{D} for solving global $\dbar$-equation on pseudo effective line bundles on projective manifolds.

\begin{theorem} \label{demailly} Let $X$ be an $n$-dimensional  projective manifold equipped with a smooth K\"ahler metric $\omega$. Let $L$ be a holomorphic line bundle over $X$ equipped with a possibly singular hermitian metric $h$ such that $Ric(h) = -\ddbar \log h \geq \delta \omega$ in current sense for some $\delta>0$. Then for every  $L$-valued $(n,1)$-form $\tau$ satisfying 
$$ \dbar \tau =0, ~ \int_X |\tau|^2_{h, \omega} ~\omega^n<\infty,$$
where $|\tau|_{h, \omega}^2 = tr_{\omega} \left( \frac{ h \tau \overline\tau}{\omega^n} \right)$, 
there exists an  $L$-valued $(n, 0)$-form $u$ such that $\dbar u = \tau$ and 
\begin{equation}
\int_X |u|^2_h ~\omega^n \leq \frac{1}{2\pi \delta} \int_X |\tau|_{h, \omega}^2 ~\omega^n.
\end{equation}

\end{theorem}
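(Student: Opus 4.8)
\emph{Proof proposal.} This is the classical H\"ormander--Kodaira--Nakano $L^2$-estimate for $\dbar$ on $(n,1)$-forms, promoted to a singular weight by Demailly's regularization. I would first treat the case in which $h$ is \emph{smooth} with $\ric(h)\ge\delta\omega>0$. Since $X$ is compact, $\omega$ is complete, so $\dbar\colon L^2_{(n,0)}(X,L)\to L^2_{(n,1)}(X,L)$ and its Hilbert-space adjoint $\dbar^*$ are densely defined and closed, and smooth forms are dense in their graph norms. For smooth $L$-valued $(n,1)$-forms $\alpha$ the Bochner--Kodaira--Nakano inequality gives
\[
\|\dbar\alpha\|^2+\|\dbar^*\alpha\|^2 \;\ge\; \int_X \big\langle [\,\sqrt{-1}\,\Theta(h),\Lambda\,]\,\alpha,\,\alpha\big\rangle_{h,\omega}\;\omega^n ,
\]
and the key structural point is that for $(n,q)$-forms the curvature of $(TX,\omega)$ is cancelled by the twist with $K_X$, leaving only $\Theta(h)$; on $(n,1)$-forms the operator $[\sqrt{-1}\,\Theta(h),\Lambda]$ is bounded below by the least eigenvalue of $\sqrt{-1}\,\Theta(h)$ relative to $\omega$, hence by $2\pi\delta$ in the normalization in which $\ric(h)$ represents the first Chern class. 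Thus $\dbar$-closed $\alpha\in\mathrm{Dom}\,\dbar^*$ satisfy $2\pi\delta\,\|\alpha\|^2\le\|\dbar^*\alpha\|^2$. Now, given $\tau$ with $\dbar\tau=0$ and $\int_X|\tau|^2_{h,\omega}\,\omega^n<\infty$, split an arbitrary $\alpha\in\mathrm{Dom}\,\dbar^*$ as $\alpha=\alpha_1+\alpha_2$ with $\alpha_1\in\ker\dbar$ and $\alpha_2\perp\ker\dbar$; since $\mathrm{Im}\,\dbar\subset\ker\dbar$ one gets $\dbar^*\alpha_2=0$, so $\langle\tau,\alpha\rangle=\langle\tau,\alpha_1\rangle$, $\dbar^*\alpha_1=\dbar^*\alpha$, and
\[
|\langle\tau,\alpha\rangle|^2\;\le\;\|\tau\|^2\,\|\alpha_1\|^2\;\le\;\tfrac{1}{2\pi\delta}\,\|\tau\|^2\,\|\dbar^*\alpha\|^2 .
\]
Hence $\dbar^*\alpha\mapsto\langle\tau,\alpha\rangle$ is a well-defined bounded functional on $\mathrm{Im}\,\dbar^*$, and Hahn--Banach with the Riesz representation theorem produce $u\in L^2_{(n,0)}(X,L)$ satisfying $\dbar u=\tau$ weakly and $\int_X|u|^2_h\,\omega^n\le\tfrac{1}{2\pi\delta}\int_X|\tau|^2_{h,\omega}\,\omega^n$.

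For general singular $h$ I would pass to the limit. Write $h=h_0e^{-\psi}$ with $h_0$ a smooth metric on $L$ and $\psi$ quasi-psh; the hypothesis says $\ric(h_0)+\ddbar\psi\ge\delta\omega$ as currents. By Demailly's regularization theorem on the projective (hence K\"ahler) manifold $X$ there exist smooth $\psi_j\searrow\psi$ with $\ric(h_0)+\ddbar\psi_j\ge(\delta-\lambda_j)\omega$ and $\lambda_j\downarrow 0$. Putting $h_j=h_0e^{-\psi_j}\nearrow h$ we have $|\tau|^2_{h_j,\omega}\le|\tau|^2_{h,\omega}$, so the smooth case yields $u_j$ with $\dbar u_j=\tau$ and $\int_X|u_j|^2_{h_j}\,\omega^n\le\tfrac{1}{2\pi(\delta-\lambda_j)}\int_X|\tau|^2_{h,\omega}\,\omega^n$. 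For fixed $j_0$ and all $j\ge j_0$ one has $h_j\ge h_{j_0}$, so $\{u_j\}_{j\ge j_0}$ is bounded in the fixed Hilbert space $L^2_{(n,0)}(X,L,h_{j_0},\omega)$; extracting a diagonal weakly convergent subsequence gives $u$ with $\dbar u=\tau$, and by weak lower semicontinuity $\int_X|u|^2_{h_{j_0}}\,\omega^n\le\liminf_j\int_X|u_j|^2_{h_{j_0}}\,\omega^n\le\tfrac{1}{2\pi\delta}\int_X|\tau|^2_{h,\omega}\,\omega^n$. Letting $j_0\to\infty$ and using monotone convergence ($h_{j_0}\nearrow h$) gives the desired estimate $\int_X|u|^2_h\,\omega^n\le\tfrac{1}{2\pi\delta}\int_X|\tau|^2_{h,\omega}\,\omega^n$.

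The smooth case is the classical Bochner/H\"ormander argument, so I expect the genuine obstacle to be the singularity of $h$: the Bochner technique is not directly available, and the remedy is Demailly's approximation of a closed positive $(1,1)$-current by smooth forms with a curvature loss $\lambda_j\to 0$. Verifying that this loss really tends to zero --- rather than being controlled only by the Lelong numbers of $\psi$ --- is exactly where the projective (or K\"ahler) hypothesis enters, and is the one nontrivial input; once it is available, the $(n,q)$ Bochner--Kodaira--Nakano inequality, the $\dbar$-duality, and the weak-limit argument are all routine.
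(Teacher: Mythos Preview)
The paper does not actually prove this theorem: it is stated as a known result and attributed to Demailly \cite{De}, with no argument given. Your proposal is precisely the standard H\"ormander--Demailly proof (Bochner--Kodaira--Nakano on $(n,q)$-forms in the smooth case, then Demailly's regularization of the singular weight and a weak-limit extraction), so there is nothing to compare against and your outline is the expected one.

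One remark on the point you yourself flag. The 1992 Demailly regularization theorem only guarantees $\theta+\ddbar\psi_j\ge \gamma-\lambda_j\omega$ with $\lambda_j(x)$ converging to the Lelong number $\nu(\psi,x)$, not to zero; so if $\psi$ has positive Lelong numbers your inequality $\ric(h_0)+\ddbar\psi_j\ge(\delta-\lambda_j)\omega$ with $\lambda_j\downarrow 0$ is not literally available from that source. The usual fix is either to observe that the Lelong upper-level sets are proper analytic subsets (Siu), apply the smooth estimate on their complement (which is complete for a suitable modification of $\omega$ or carries the estimate by a further approximation), and note that the resulting $u$ extends across by the $L^2$ bound; or, more directly, to run the Bochner--Kodaira inequality with the singular weight itself on a Stein open cover and globalize. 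Either route is routine once identified, and Demailly's cited paper carries it out; your write-up would be cleaner if you stated explicitly which variant you are invoking rather than asserting $\lambda_j\to 0$.
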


Now we can prove the main result of the section. 

\begin{lemma}    \label{coinreg}
$$\mathcal{S}_X =  \mathcal{S}.$$

\end{lemma}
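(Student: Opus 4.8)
We already have $\mathcal{S}\subset\mathcal{S}_X$ from the corollary, so the work is to prove $\mathcal{S}_X\subset\mathcal{S}$. Equivalently, given $q_\infty\in X_\infty\setminus\mathcal{S}$, we must show $q_\infty\notin\mathcal{S}_X$, i.e. that $q_\infty$ admits a sequence $q_k\in(\mathcal{R}_X,g_k)$ converging to it. The strategy is the ``variation of Tian's partial $C^0$-estimate'' advertised in the introduction: at a regular point of $X_\infty$ the tangent cone is flat $\mathbb{C}^n$, so by Cheeger-Colding one can find, for $k$ large, a ball $B_{g_k}(q_k,r)$ that is Gromov-Hausdorff close (and, using Tian-Wang and the smooth convergence on $\mathcal{R}$, also $C^\infty$-close on a slightly smaller ball) to a Euclidean ball. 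On such a ball one constructs approximately holomorphic local ``peak'' sections of $mK_X$ (twisted by $h_{g_k(1)}^m$) whose $2$-jet at $q_k$ is prescribed, using Hörmander's $L^2$-estimate in the form of Theorem \ref{demailly}: the relevant curvature positivity comes from equation (\ref{ricpo}), which expresses $Ric(g_k(1))-g_k(1)$ as $\ddbar$ of an explicit potential that is controlled (bounded above, with only mild logarithmic blow-up along $D$) by Lemma \ref{lm31} and Lemma \ref{lm25}. Correcting the almost-holomorphic local model section by solving $\dbar u=\dbar(\text{cutoff}\cdot\text{model})$ with the $L^2$-bound of Theorem \ref{demailly}, and then using the a priori sup-bounds on sections and their gradients (Lemma \ref{lm24}, Lemma \ref{lm25}, Corollaries \ref{co22}, \ref{co23}) to convert $L^2$-control to $C^0$-control of the jet, shows that $2$-jets of $mK_X$ separate points and directions near $q_k$ for some fixed large $m$. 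Hence $q_k\in\mathcal{R}_X$, giving $q_\infty\in\overline{(\mathcal{R}_X,g_\infty)}$ mapped to $q_\infty$ under $f$, so $q_\infty\notin\mathcal{S}_X$.

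In more detail, the steps in order: (i) Fix $q_\infty\in X_\infty\setminus\mathcal{S}$ and choose $q_k\to q_\infty$; by the regularity of $\mathcal{R}$ in Proposition \ref{ghlimgt} and Cheeger-Colding, after passing to a subsequence the pointed spaces $(X,q_k,g_k)$ converge to $(X_\infty,q_\infty,d_\infty)$ with a definite-radius ball around $q_\infty$ being $\epsilon$-close to a Euclidean ball, and (using $L^2$-curvature smallness and $\epsilon$-regularity \`a la Tian-Wang) the convergence is $C^\infty$ on an interior sub-ball. (ii) Transplant the standard Gaussian peak section construction: let $s$ be the local holomorphic frame of $mK_X$ near $q_k$, multiply by a cutoff supported in the good ball and by the Gaussian weight coming from the almost-Euclidean K\"ahler potential, obtaining a smooth section $\tilde s_k$ of $mK_X$ with prescribed $2$-jet at $q_k$ and with $\|\dbar\tilde s_k\|_{L^2}$ small (measured in $h_{g_k(1)}^m$ and $g_k(1)$). (iii) Apply Theorem \ref{demailly} with $L=mK_X$ and the singular metric $h$ whose curvature is $\ddbar$ of the potential in (\ref{ricpo}) plus a small multiple of $-\log|\sigma_D|^2$ to gain strict positivity $\geq\delta\omega$; solve $\dbar u_k=\dbar\tilde s_k$ with $\|u_k\|_{L^2}$ small. (iv) Set $\sigma_k=\tilde s_k-u_k\in H^0(X,mK_X)$; the $L^2$-smallness of $u_k$, combined with the interior elliptic estimates and the uniform $C^1$-bounds on holomorphic sections (Lemma \ref{lm25}, Cor.\ \ref{co23}) applied to $u_k$ itself, forces the $2$-jet of $u_k$ at $q_k$ to be small, so $\sigma_k$ still has essentially the prescribed $2$-jet. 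Running this for a basis of $2$-jets shows $q_k\in\mathcal{R}_X$ for $k$ large. (v) Conclude via Lemma \ref{lmlochom}: $q_k\in\mathcal{R}_X$ converging (in GH sense) to $q_\infty$ means, by definition, $q_\infty\notin\mathcal{S}_X$.

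The main obstacle is the passage from the $L^2$-estimate to a genuine $C^0$/jet estimate on a fixed (not shrinking) scale, uniformly in $k$: one needs the curvature positivity $\delta$ in Theorem \ref{demailly} to be bounded below independently of $k$, which is delicate because the potential in (\ref{ricpo}) carries the factor $e^{-k}(1-e^{-k})^{-1}$ and a $\log|\sigma_D|^2_{h_{D,\epsilon_0}}$ term that blows up near $D$, and because the error section $u_k$ must be controlled in $C^1$ near $q_k$ by Cheeger-Colding/Tian-Wang-type interior regularity for almost-K\"ahler-Einstein metrics rather than by a uniform global smooth bound (which fails near $\mathcal{S}$). Handling this requires combining the sup-bounds from Section 2 (Lemma \ref{lm24}, Cor.\ \ref{co22}) with a careful choice of the twisting metric so that $\delta$ and the $L^2$-integrability of $\tau$ are uniform, and then invoking the $\epsilon$-regularity theorem of Tian-Wang on the good ball to upgrade $L^2$ to $C^1$ for $u_k$. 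Once these uniformities are in place, the peak-section argument and the definition of $\mathcal{R}_X$ close the gap $\mathcal{S}_X\subset\mathcal{S}$, completing the proof that $\mathcal{S}_X=\mathcal{S}$.
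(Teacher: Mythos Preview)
Your overall architecture matches the paper's proof exactly: argue by contradiction, take $q_k\in X\setminus\mathcal{R}_X$ converging to $q_\infty\in\mathcal{R}$, use Tian--Wang pseudolocality to get $C^\infty$ convergence of $g_k(1)$ on a fixed ball $B_{g_k(1)}(q_k,r_0)$, exploit the Ricci potential identity (\ref{ricpo}) to obtain the curvature positivity needed in Theorem~\ref{demailly}, solve a $\dbar$-equation, and conclude that $mK_X$ generates all $\mu$-jets with $|\mu|\le 2$ at $q_k$.

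The one substantive difference is in how you force the correction $u_k$ not to destroy the prescribed jet. The paper inserts a logarithmic pole into the H\"ormander weight: it sets
\[
\Psi_k \;=\; (|\mu|+10)\,\eta\!\left(\tfrac{\rho_k^2}{100K}\right)\log\rho_k^2 \;+\; \psi_k(1) - (e^{-1}-e^{-k-1})\varphi_k - e^{-k-1}\epsilon_0\log|\sigma_D|^2_{h_{D,\epsilon_0}},
\]
so that finiteness of $\int_X |u_k|^2_{h_k^m}e^{-\Psi_k}\,\omega_k(1)^n$ alone forces $u_k$ to vanish to order $>|\mu|$ at $q_k$; the section $\eta\,(z^{(k)})^{\mu}-u_k$ then has \emph{exactly} the $\mu$-jet, with no further estimate required. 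Your route instead keeps the weight nonsingular at $q_k$, makes $\|\dbar\tilde s_k\|_{L^2}$ small via the Gaussian decay of $h^m$, and recovers the jet by Cauchy/interior estimates on the inner ball where $u_k$ is holomorphic. Both are standard and both work here; the paper's multiplier-ideal weight is cleaner (one integrability check replaces the $L^2\to C^2$ conversion and the perturbation-of-basis argument), while your peak-section route is closer in spirit to the Donaldson--Sun machinery used later in Section~5.

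Two small fixes. First, your sentence ``$q_\infty\notin\mathcal{S}_X$, i.e.\ $q_\infty$ admits a sequence $q_k\in\mathcal{R}_X$ converging to it'' is not the definition: $q_\infty\notin\mathcal{S}_X$ means \emph{no} sequence from $X\setminus\mathcal{R}_X$ converges to $q_\infty$, and your argument must (and does, once read as a contradiction) handle an arbitrary such sequence. Second, do not invoke Lemma~\ref{lm25} or Corollary~\ref{co23} for $u_k$: those are global bounds for holomorphic sections of $mK_X$, whereas $u_k$ is not globally holomorphic. What you actually need is the local Cauchy estimate on the inner ball (where $\dbar u_k=0$ and $g_k(1)$ is uniformly close to Euclidean), and you need it up to second order since $\mathcal{R}_X$ requires generation of $2$-jets, not just $1$-jets.
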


\begin{proof} Suppose not. There exist a sequence of points $q_k \in X\setminus \mathcal{R}_X$  such that $q_k$ converges to $q_\infty\in \mathcal{R}$. Then there exists a sufficiently small $r_0>0$ such that the limiting metric $d_\infty$ induces a  smooth K\"ahler-Einstein metric $g'_\infty$ on $B_{d_\infty}(p_\infty, 3r_0)$. 

Using the modified Perelman's pseudolocaity theorem in \cite{TW}, we know that $B_{g_k(1)}(q_k, 3 r_0)$ converges smoothly to $B_{d_\infty}(q_k, 3 r_0)$. More precisely, for each $k$, there exists a diffeomorphism $\phi_k : B_{d_\infty}(q_\infty, 3r_0) \rightarrow X$ such that 
$$\phi_k(q_\infty) = q_k, ~   \| \phi_k^*~ g_k(1) - g_\infty\|_{C^l(g_\infty)} \rightarrow 0, \|\phi_k^* I_k - I_\infty\|_{C^l({g_\infty} )} \rightarrow 0$$
for any fixed $l>0$, where $I_k$ and $I_\infty$ are the complex structures on $B_{g_{k(1)}}(q_k, 3r_0)$ and $B_{d_\infty}(q_\infty, 3 r_0)$.

Therefore we can assume then that the curvature of $g_k$ on $B_{g_k(1)}(q_k, 2r_0)$ is uniformly bounded and the injectivity radius of $g_k$ at $q_k$ is strictly greater than $2r_0$, for all $k$.  We can can pick complex coordinates $z^{(k)}= (z_1^{(k)}, ..., z_n^{(k)})$ on $B_{g_k(1)} (q_k, r_0)$ and  there exists $K>0$ independent of $k$ such that on $B_{g_k(1)}(q_k, r_0)$
$$q_k= 0, ~ K^{-1} d_{g_k(1)} \leq d_{\mathbb{C}^n, k} \leq K d_{g_k(1)},$$
where $d_{\mathbb{C}^n, k}$ is the Euclidean metric induced by $z^{(k)}$.

Let 
$$\rho_k = \left| z^{(k)}\right|^2, ~ \left(z^{(k)} \right)^\mu  = \Pi_{i=1}^n  \left(z^{(k)}_i \right)^{\mu_i}, ~  \mu = (\mu_1, ..., \mu_n). $$
Let $\eta$ be a smooth cut-off function with $\eta(x)=1$ for $t<1/2$ and $\eta(x)=0$ for $x\geq 1$. 
We now construct the weight $\Psi_k$ by
$$\Psi_k =  (|  \mu | +10) \eta\left(\frac{\rho_k^2}{100K}\right) \log (\rho_k^2) + \psi_k (1) - (e^{-1} - e^{-k-1}) \varphi_k   - e^{-k-1} \epsilon_0 \log |\sigma_D |^2_{h_{D, \epsilon_0} } .$$
We then define
$$h_k = h_\chi e^{-\psi_k(1) - (e^{-1} - e^{-k-1})\ddbar \varphi_k - e^{-k-1} \epsilon_0 \log |\sigma_D|^2_{h_{D, \epsilon_0}}}, ~ g_k(1) = Ric(h_k)  . $$
Then there exists $m_0$ such that for $m\geq m_0$, we have 
\begin{eqnarray*}
&&m Ric(h_k) + Ric(g_k(1)) + \ddbar \Psi_k\\
 &=& (m-1)g_k(1) + \ddbar  \left( (|\mu| +10) \eta\left(\frac{\rho_k^2}{100K}\right) \log (\rho_k^2)\right)\\
&\geq &  \frac{m}{2} ~g_k(1).
\end{eqnarray*}
Now we fix such an $m$ for all $k$ and we consider 
$$\eta_{k,  \mu}  = \dbar \left( \eta\left( \frac{\rho_k }{100K}\right) \left( z^{(k)} \right)^{ \mu} \right). $$
$\eta_{k,  \mu}$ is smooth on $X$ and hence it is $L^2$-integrable with respect to $(h_k)^m e^{-\Psi_k}$ for sufficiently large $k$.

Then we can solve for $u_k $ satisfying%
$$\dbar u_k = \eta_{k, \mu}, ~ \int_X |u_k|^2_{ h_k^me^{-\Psi_k}}  (\omega_k(1))^n < \infty. $$
This forces $u_k$ to be holomorphic near $q_k$ and vanishes to order $| \mu |$. This implies that
$\eta(\rho_k / M) (z^{(k)})^{  \mu} - u_k$ is a holomorphic section of $m K_X$ generating $\mu$-jet at $q_k$ and so $q_k\in \mathcal{R}_X$ for sufficiently large $k$. This is contradiction.

\end{proof}

We remark that we cannot apply the proof of Lemma 4.4 in \cite{S3} to prove Lemma \ref{coinreg} since we do not have a contraction morphism from $X$ to its canonical model. Immediately, we have the following corollary.
\begin{corollary} 

The metric completion of $(\mathcal{R}_X, g_\infty)$ is isomorphic to $(X_\infty, d_\infty)$. In particular, $$\mathcal{R}_X = \mathcal{R}.$$

\end{corollary}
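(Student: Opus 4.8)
The plan is to prove the two containments $\mathcal{S}\subset \mathcal{S}_X$ and $\mathcal{S}_X\subset\mathcal{S}$ separately. The first is already obtained as the corollary to Lemma \ref{lmlochom}: on $X_\infty\setminus\mathcal{S}_X$ every tangent cone is the flat $\mathbb{C}^n$, so such points are regular, whence $\mathcal{S}\subset\mathcal{S}_X$. It therefore suffices to establish the reverse inclusion $\mathcal{S}_X\subset\mathcal{S}$, equivalently $X_\infty\setminus\mathcal{S}\subset X_\infty\setminus\mathcal{S}_X$, i.e.\ every regular point $q_\infty\in\mathcal{R}$ of the Cheeger--Colding limit must be the limit of points $q_k\in\mathcal{R}_X$ rather than of points in $X\setminus\mathcal{R}_X$.

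I would argue by contradiction. Assume there is a sequence $q_k\in X\setminus\mathcal{R}_X$ converging (Gromov--Hausdorff, along the flow $g_k(1)$) to $q_\infty\in\mathcal{R}$. By Proposition \ref{ghlimgt}(2), $d_\infty$ is a smooth K\"ahler--Einstein metric on a small ball $B_{d_\infty}(q_\infty,3r_0)$; then the modified Perelman pseudolocality theorem of Tian--Wang \cite{TW} upgrades the Gromov--Hausdorff convergence to smooth convergence of $B_{g_k(1)}(q_k,3r_0)$, together with convergence of the complex structures $I_k\to I_\infty$. Consequently, for $k$ large, the curvature of $g_k(1)$ on $B_{g_k(1)}(q_k,2r_0)$ is uniformly bounded and $\mathrm{inj}_{g_k(1)}(q_k)>2r_0$, so one can choose uniformly comparable holomorphic coordinates $z^{(k)}$ centered at $q_k$. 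Then the standard Hörmander/Demailly $L^2$ construction of a peak section applies: set the weight $\Psi_k$ as in the proof, cut off the monomial $(z^{(k)})^\mu$ by $\eta(\rho_k/(100K))$, let $\eta_{k,\mu}=\dbar(\eta(\rho_k/(100K))(z^{(k)})^\mu)$, and use Theorem \ref{demailly} with the line bundle $mK_X$ and the singular metric $h_k^m e^{-\Psi_k}$. The positivity computation $mRic(h_k)+Ric(g_k(1))+\ddbar\Psi_k\ge \tfrac{m}{2}g_k(1)$ gives the needed curvature lower bound, so we solve $\dbar u_k=\eta_{k,\mu}$ with finite $L^2$-norm; the log-pole part of $\Psi_k$ forces $u_k$ to vanish to order $|\mu|$ at $q_k$. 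Hence $\eta(\rho_k/(100K))(z^{(k)})^\mu-u_k$ is a global holomorphic section of $mK_X$ generating the $\mu$-jet at $q_k$ for all $|\mu|\le 2$, so $q_k\in\mathcal{R}_X$ for large $k$ — contradicting $q_k\notin\mathcal{R}_X$. This proves $\mathcal{S}_X\subset\mathcal{S}$, and combined with the first containment gives $\mathcal{S}_X=\mathcal{S}$.

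The main obstacle is ensuring that the weight $\Psi_k$ and the cutoff are engineered so that, simultaneously: (i) $\eta_{k,\mu}$ has finite $L^2$-norm against $h_k^m e^{-\Psi_k}$ (this needs the cutoff supported where the $\log$-weight is controlled and the remaining $\psi_k(1)$, $\varphi_k$, $\log|\sigma_D|^2$ terms to be handled using the a priori estimates — Lemma \ref{prop21} and Lemma \ref{lm31} — together with the $L^\infty$ bounds on $\psi_k(1)$ away from $D$); (ii) the curvature form of $h_k^m e^{-\Psi_k}$ is bounded below by a positive multiple of a fixed K\"ahler metric, uniformly in $k$, which is exactly where the smooth convergence from pseudolocality and the comparability $K^{-1}d_{g_k(1)}\le d_{\mathbb{C}^n,k}\le K d_{g_k(1)}$ enter; and (iii) the solution $u_k$ is genuinely small in sup norm near $q_k$ (so that it does not cancel the leading monomial), which again follows from the $L^2$-bound via the mean value inequality on the ball where the metrics are uniformly equivalent. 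The delicate point, compared with \cite{S3}, is that here there is no contraction morphism $X\to X_{can}$ available, so one cannot pull jets back from the canonical model; the argument must be run intrinsically on $X$ using the flow metrics $g_k(1)$ and the uniform geometry supplied by Tian--Wang's pseudolocality.
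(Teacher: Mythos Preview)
Your argument is correct and is precisely the paper's proof of Lemma \ref{coinreg}, which already establishes $\mathcal{S}_X=\mathcal{S}$ immediately before this Corollary; so you are re-proving that lemma rather than invoking it. To finish the Corollary itself you should add the one-line deduction the paper leaves implicit: once $\mathcal{S}_X=\mathcal{S}$, Lemma \ref{lmlochom} gives that $f$ identifies $(\mathcal{R}_X,g_\infty)$ isometrically with $(\mathcal{R},g_{KE})$, and since $\mathcal{R}$ is open, dense and convex in $(X_\infty,d_\infty)$ by Proposition \ref{ghlimgt}(1), its metric completion is $(X_\infty,d_\infty)$.
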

Therefore, the metric completion of $(\mathcal{R}_X, g_\infty)$ is isomorphic to $(X_\infty, d_\infty)$ and there is an isomorphism between $(\mathcal{R}_X, g_\infty)$ and $(\mathcal{R}, g_{KE})$. We can now simply identify $\mathcal{R}$ and $g_{KE}$ with $\mathcal{R}_X$ and $g_\infty$. We also have the following technical corollary.

\begin{corollary} Let $ D'$ be any effective divisor such that $L- \epsilon  D'$ is ample for some $\epsilon>0$. Then $X\setminus D' \subset \mathcal{R}_X$. 

\end{corollary}

\section{$L^2$-estimates}

We now pick a base point $p$ in $\mathcal{R}_X$ as in Lemma \ref{locesk}. $(X, p, g_k)$ converges to the metric length space of $(X_\infty, p_\infty, d_\infty)$, where $g_k$ is defined in (\ref{maeq}). We assume that 
$$diam_{g_k}(X) \rightarrow \infty$$
because the proof for Theorem \ref{main1} is much simpler if $diam_{g_k}(X)$ is uniformly bounded for all $k$ and so  the limiting space $(X, d_\infty)$ being a compact metric space. 

Let $B_\infty(r)$ be the geodesic ball in $(X_\infty, d_\infty)$ centered at $p_\infty $ with radius $r>0$. Let $B_k(r)$ be the geodesic ball in $(X, g_k)$ centered at $p$ of radius $r$. Then $B_k(r)$ converges to $B_\infty(r)$ in Gromov-Hausdorff topology as $k \rightarrow \infty$.  We will derive local $L^2$-estimates on each $B_\infty(r)$ for all $r>0$.

Let $g_{KE}=g_\infty$ be the limiting smooth K\"ahler-Einstein metric as the smooth limit of $g_k$ on $\mathcal{R}$ and $h_{KE} = h_\chi e^{-\varphi_{KE}} = (\omega_{KE})^{-n}$ be the hermitian metric on $K_X$ on $\mathcal{R}$, where $\varphi_{KE} =\varphi_\infty$ as the limiting K\"ahler potential in section 2 and $\omega_{KE}$ is the K\"ahler-Einstein form associated to $g_{KE}$.
Each $\sigma \in H^0(X, mK_X)$ can be defined on $\mathcal{R}$ on $X_\infty$ and Corollary \ref{co22} and Corollary \ref{co23} imply that  
$$\sup_{\mathcal{R}} |\sigma |^2_{(h_{KE}) ^m} < \infty, ~ \sup_{\mathcal{R}} |\nabla \sigma |^2_{g_{KE}, (h_{KE})^m} < \infty. $$ Immediately we have the following corollary because $\mathcal{R}$ is an open dense convex subset of $(X_\infty, d_\infty)$.

\begin{corollary}\label{exten} For any $\sigma\in H^0(X, m K_X)$, $|\sigma|^2_{ (h_{KE})  ^m}$ extends from $\mathcal{R}$ to a Lipschitz function on $(X_\infty, d_\infty). $

\end{corollary}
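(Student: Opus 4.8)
The plan is to upgrade the two sup-bounds just recorded to a uniform Lipschitz bound for $u:=|\sigma|^2_{(h_{KE})^m}$ on $(\mathcal{R}, d_\infty)$, and then extend $u$ to all of $X_\infty$ by density and completeness.

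First, combining Corollary \ref{co22} and Corollary \ref{co23} with the Leibniz rule for the Chern connection, one has on $\mathcal{R}$
$$|\nabla u|_{g_{KE}} \;\leq\; 2\,|\sigma|_{(h_{KE})^m}\,|\nabla\sigma|_{g_{KE},(h_{KE})^m} \;\leq\; C_1,$$
where $C_1=C_1(m,\sigma)$, and $u$ itself is bounded on $\mathcal{R}$ by the same two corollaries. Next, by Proposition \ref{ghlimgt} the regular set $\mathcal{R}$ is a convex subset of $(X_\infty, d_\infty)$, and by Lemma \ref{lmlochom} (together with the identifications made after Lemma \ref{coinreg}) the metric $d_\infty$ restricted to $\mathcal{R}$ coincides with the intrinsic Riemannian distance of $g_{KE}$; in particular, for any $x,y\in\mathcal{R}$ there is a minimal geodesic $\gamma$ of $(X_\infty, d_\infty)$ joining them and lying entirely in $\mathcal{R}$, which is then a smooth geodesic of $g_{KE}$ of length $d_\infty(x,y)$. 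Integrating $du$ along $\gamma$,
$$|u(x)-u(y)| \;\leq\; \int_\gamma |\nabla u|_{g_{KE}} \;\leq\; C_1\,\mathrm{length}(\gamma) \;=\; C_1\, d_\infty(x,y),$$
so $u$ is $C_1$-Lipschitz on $(\mathcal{R}, d_\infty)$.

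Finally, $\mathcal{R}$ is dense in $(X_\infty, d_\infty)$, which is a complete metric length space, being a pointed Gromov--Hausdorff limit of complete Riemannian manifolds. A $C_1$-Lipschitz function on a dense subset of a complete metric space extends uniquely to a $C_1$-Lipschitz function on the whole space, and this extension, which restricts to $u$ on $\mathcal{R}$, is the Lipschitz extension of $|\sigma|^2_{(h_{KE})^m}$ to $(X_\infty, d_\infty)$ asserted in the corollary.

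The only point that genuinely uses the structure theory rather than soft arguments is the identification of $d_\infty$ on $\mathcal{R}$ with the intrinsic Riemannian distance of $g_{KE}$: a pointwise gradient bound on the open manifold $\mathcal{R}$ controls the oscillation of $u$ measured in $d_\infty$ precisely because the convexity in Proposition \ref{ghlimgt} guarantees that $d_\infty$-nearby points of $\mathcal{R}$ are joined by short geodesics lying inside $\mathcal{R}$, rather than forcing long detours around the singular set $\mathcal{S}$.
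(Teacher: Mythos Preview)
Your proof is correct and follows exactly the approach the paper intends: the paper only states that the corollary follows ``immediately\ldots because $\mathcal{R}$ is an open dense convex subset of $(X_\infty, d_\infty)$,'' and you have simply unpacked that one sentence---deriving a gradient bound for $u=|\sigma|^2_{(h_{KE})^m}$ from Corollaries~\ref{co22} and~\ref{co23}, using convexity to realize $d_\infty$ on $\mathcal{R}$ by $g_{KE}$-geodesics, and then extending by density.
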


For convenience, we define the following pluricanonical system for $X_\infty$.
\begin{definition} We denote $H^0(X_\infty, m K_{X_\infty})$ by the  set of all holomorphic sections in $H^0(X, mK_X)$ over $\mathcal{R}_X$.

\end{definition}

We need the cut-off functions constructed in the following lemma corresponds to Lemma 3.7 in \cite{S3}.
 
\begin{lemma} \label{app} For any $\epsilon>0$ and $K\subset\subset X \setminus D $, there exists  $\rho_\epsilon \in C^\infty(X \setminus D)$ such that 

\begin{enumerate}

\item $ 0\leq \rho_\epsilon \leq 1$, 
\smallskip

\item $Supp \rho_\epsilon \subset\subset X \setminus D$, 
\smallskip

\item $\rho_\epsilon=1$ on $K$, 
\smallskip

\item $ \int_X |\nabla \rho_\epsilon|^2 ( \omega_{KE})^n = \int_{X\setminus D}  \sqrt{-1}\partial \rho_\epsilon \wedge  \dbar \rho_\epsilon \wedge (\omega_{KE})^{n-1}< \epsilon$. 

\end{enumerate}

\end{lemma}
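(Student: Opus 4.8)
The plan is to build $\rho_\epsilon$ as a function of the distance to $D$ measured through a suitable potential, exploiting the fact that, although $D$ may be positive-dimensional, its "capacity" with respect to the K\"ahler-Einstein metric $\omega_{KE}$ is negligible because $\omega_{KE}$ has finite total volume $\int_X \omega_{KE}^n = \int_X e^{\varphi_\infty}\Omega < \infty$ and only mild (at worst logarithmic) blow-up of its potential near $D$. Concretely, I would fix the defining section $\sigma_D$ of $D$ with its smooth hermitian metric $h_D$ and set $u = -\log|\sigma_D|^2_{h_D}$, so that $u \to +\infty$ along $D$ and $u$ is smooth and bounded below away from $D$; after rescaling $h_D$ we may assume $u \geq 1$ on all of $X$. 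For a parameter $R>0$ large, define the Richberg-type logarithmic cut-off $\chi_R(u) = 1$ for $u \leq R$, $\chi_R(u) = 2 - \frac{\log u}{\log R}$ for $R \leq u \leq R^2$, and $\chi_R(u) = 0$ for $u \geq R^2$, and put $\rho_R = \chi_R(u)$ (smoothed near the two corners, which changes nothing below). Properties (1), (2), (3) are then immediate for $R$ large enough that $\{u \leq R\} \supset K$, using Corollary 3.5 (or the remark after it) which gives $X \setminus D' \subset \mathcal{R}_X$ for $D'$ containing $D$, so that $\mathrm{Supp}\,\rho_R \subset \{u \leq R^2\} \subset\subset X \setminus D$.

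The heart of the matter is property (4), the energy estimate. On the transition annulus $\{R \leq u \leq R^2\}$ we have $\nabla \rho_R = -\frac{1}{u \log R}\,\nabla u$, hence
\begin{equation}\label{eq:energybound}
\int_X |\nabla \rho_R|^2_{g_{KE}}\,\omega_{KE}^n = \frac{1}{(\log R)^2}\int_{\{R \leq u \leq R^2\}} \frac{|\nabla u|^2_{g_{KE}}}{u^2}\,\omega_{KE}^n.
\end{equation}
Since $\sqrt{-1}\,\partial u \wedge \dbar u = \ddbar\big(\tfrac{1}{2}u^2\big) - u\,\ddbar u$ and $\ddbar u = -\mathrm{Ric}(h_D)$ is a fixed smooth form, while on $\{u \geq R\}$ we have $u^{-2}\sqrt{-1}\partial u\wedge\dbar u \leq \sqrt{-1}\,\partial(-\log u)\wedge\dbar(-\log u)$, I would bound the integrand by $|\nabla u|^2_{g_{KE}}/u^2 \leq C(1 + \mathrm{tr}_{\omega_{KE}}(\sqrt{-1}\,\partial u\wedge\dbar u/u^2))$ and reduce \eqref{eq:energybound} to controlling $\int_{\{u\geq R\}} \mathrm{tr}_{\omega_{KE}}\big(\sqrt{-1}\,\partial(-\log u)\wedge\dbar(-\log u)\big)\,\omega_{KE}^n$. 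Now $\sqrt{-1}\,\partial(-\log u)\wedge\dbar(-\log u) \leq \ddbar(-\log\log(u)) + (\text{smooth})$ up to the fixed contribution of $\mathrm{Ric}(h_D)$, so an integration by parts against $\omega_{KE}^{n-1}$, combined with the Lipschitz bounds on $|\sigma|^2_{h_{KE}^m}$ from Corollary 4.1 and the fact that $\omega_{KE}^n = e^{\varphi_\infty}\Omega$ is a finite measure giving no mass to $D$, shows that the tail integral $\int_{\{u \geq R\}}(\cdots)\,\omega_{KE}^n \to 0$ as $R \to \infty$; dividing by $(\log R)^2$ only helps. Thus the left side of \eqref{eq:energybound} tends to $0$, and choosing $R$ large gives (4).

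The main obstacle I anticipate is the integration by parts in the last step: a priori $\omega_{KE}$ is only a K\"ahler current with bounded local potential near $D$ (Corollary 2.4 and Lemma 2.3 give $\varphi_\infty \geq \epsilon \log|\sigma_D|^2_{h_D} - C_\epsilon$, i.e.\ only a one-sided logarithmic bound), so one cannot integrate by parts naively on $X \setminus D$. The remedy is to do the computation on the compact domains $\{u \leq R^2\}$, where all forms are smooth, and track the boundary term on $\{u = R^2\}$; since $-\log u$ and its differential are $O(1/\log R)$ and $O(1/(R^2\log R))$ there respectively, and $\omega_{KE}^{n-1}$ restricted to that hypersurface has area controlled by Lemma 2.1(3) (which bounds $\mathrm{tr}_{\omega_0}\omega$ polynomially in $|\sigma_D|^{-1}$) against the smooth reference form, the boundary contribution is dominated and vanishes in the limit. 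This is exactly the mechanism behind the analogous Lemma 3.7 in \cite{S3}, so the argument is structurally available; only the bookkeeping of which estimates replace the ones used there (here Lemma 2.1, Corollary 2.4, Corollary 4.1) needs care.
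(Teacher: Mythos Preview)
Your construction of the cut-off as a function of $u=-\log|\sigma_D|^2_{h_D}$ is essentially the paper's choice, and properties (1)--(3) follow as you say. The gap is in your justification of (4). You propose to integrate by parts directly with $\omega_{KE}$ on the compact domain $\{u\le R^2\}$ and control the boundary term on $\{u=R^2\}$ using the pointwise bound $\mathrm{tr}_{\omega_0}\omega_{KE}\le C|\sigma_D|_{h_D}^{-2\lambda}$. But on $\{u=R^2\}$ one has $|\sigma_D|^{-2\lambda}=e^{\lambda R^2}$, so the mass of $\omega_{KE}^{n-1}$ on that hypersurface may be of order $e^{(n-1)\lambda R^2}$ relative to the smooth reference. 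This exponential blow-up in $R^2$ is not offset by the merely polynomial prefactors $1/(R^2\log R)$ coming from your cut-off, so the boundary contribution is not under control. Your appeal to the Lipschitz bound on $|\sigma|^2_{(h_{KE})^m}$ is also unrelated to the energy of $\rho_\epsilon$ and does not help here.

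The paper avoids boundaries altogether. It replaces $\omega_{KE}$ by the smooth approximating K\"ahler metrics $\omega_k$ from (\ref{maeq}), defined on the closed manifold $X$, and observes that since $\mathrm{Supp}\,\rho_\epsilon\subset\subset X\setminus D$ where $\omega_k\to\omega_{KE}$ smoothly,
\[
\int_X\sqrt{-1}\,\partial\rho_\epsilon\wedge\bar\partial\rho_\epsilon\wedge\omega_{KE}^{n-1}
=\lim_{k\to\infty}\int_X\sqrt{-1}\,\partial\rho_\epsilon\wedge\bar\partial\rho_\epsilon\wedge\omega_k^{n-1}.
\]
For each fixed $k$ the integration by parts is now global on a closed manifold with smooth data. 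With $\eta_\epsilon=\max(\log|\sigma_D|^2_{h_D},\log\epsilon)$, which is a \emph{bounded} element of $PSH(X,\theta)$ for a fixed K\"ahler form $\theta$, the Chern--Levine--Nirenberg computation gives
\[
\int_X\sqrt{-1}\,\partial\eta_\epsilon\wedge\bar\partial\eta_\epsilon\wedge\omega_k^{n-1}
\le C(-\log\epsilon)\,[\theta]\cdot[\omega_k]^{n-1},
\]
uniformly in $k$ since the classes $[\omega_k]$ lie in a bounded set. Dividing by $(\log\epsilon)^2$ yields $C(-\log\epsilon)^{-1}\to 0$, and one then lets $k\to\infty$. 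The maneuver you are missing is thus twofold: truncate the potential to a bounded $\theta$-psh function so that the global CLN estimate applies, and pass through the smooth approximants $\omega_k$ so that there is no boundary term to track at all.
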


\begin{proof} The difference of Lemma \ref{app} from Lemma 3.7 in \cite{S3} is that a priori we do not know if the local potential $\varphi_{KE}$ of $\omega_{KE}$ is bounded. Without loss of generality, we can assume that $|\sigma|_{h_D}^2 \leq 1$. Let $\theta$ be a K\"ahler metric on $Y$ such that $\theta > Ric(h_D)$ and $[\theta]\geq -c_1(X)$. 
Let $F$ be the standard smooth cut-off function on $[0, \infty)$ with $F=1$ on $[0, 1/2]$ and $F=0$ on $[1, \infty)$. 
We then let 
$$\eta_\epsilon = \max ( \log |\sigma|^2_{h_D}, \log \epsilon) . $$
For sufficiently small $\epsilon$, we have $-\log \epsilon \leq \eta_\epsilon \leq 0$. 
Then obviously, $\eta_\epsilon \in PSH(X, \theta) \cap C^0(X)$.
Now we let 
$$\rho_\epsilon = F\left(\frac{\eta_\epsilon}{\log \epsilon} \right).$$
Then  $\rho_\epsilon =1 $ on $K$ if $\epsilon$ is sufficiently small. We first notice that for fixed $\epsilon>0$
$$\int_X \sqrt{-1} \partial \rho_\epsilon \wedge \dbar \rho_\epsilon \wedge (\omega_{KE})^{n-1} = \lim_{k\rightarrow \infty}   \int_X \sqrt{-1} \partial \rho_\epsilon \wedge \dbar \rho_\epsilon \wedge (\omega_k)^{n-1}$$
from the smooth uniform convergence of $\omega_k$ on any compact subset of $X\setminus D$,  where $\omega_k$ is the almost K\"ahler-Einstein metric defined in equation (\ref{maeq}).
Straightforward calculations give
\begin{eqnarray*}
&&\int_X \sqrt{-1} \partial \rho_\epsilon \wedge \dbar \rho_\epsilon \wedge \omega_k^{n-1}  \\
&=&(\log \epsilon)^{-2}  \int_X (F')^2 \sqrt{-1}\partial \eta_\epsilon \wedge \dbar \eta_\epsilon \wedge \omega_k^{n-1}\\
&\leq& C (\log \epsilon)^{-2} \int_X (-\eta_\epsilon) \ddbar \eta_\epsilon \wedge \omega_k^{n-1} \\
&\leq& C(\log \epsilon)^{-2} \int_X (-\eta_\epsilon)(\theta+ \ddbar \eta_\epsilon) \wedge \omega_k^{n-1}
+ C(\log \epsilon)^{-2} \int_X \eta_\epsilon ~ \theta \wedge \omega_k^{n-1}\\
&\leq& C(-\log \epsilon)^{-1} \int_X (\theta+ \ddbar \eta_\epsilon) \wedge \omega^{n-1} \\
&\leq& C(-\log \epsilon)^{-1} [\theta]^n
\end{eqnarray*}
where $C$ only depends does not depend on $\epsilon$ and $k$ as the class $[\omega_k]$ is uniformly bounded for all $k$. Hence
$$\lim_{\epsilon\rightarrow 0} \int_X \sqrt{-1} \partial \rho_\epsilon \wedge \dbar \rho_\epsilon \wedge (\omega_{KE})^{n-1} = 0.$$
Therefore we obtain  $\rho_\epsilon \in C^0(X)$ satisfying the conditions in the lemma. The lemma is then proved by smoothing $\rho_\epsilon$ on $Supp~ \rho_\epsilon \setminus K$.

\end{proof}

We  denote  $  \| \sigma \|_{L^{2, \sharp}(B_\infty(2R))}$, $\|\sigma\|_{L^{\infty, \sharp}(B_\infty(R))}$ and  $\|\nabla \sigma \|_{L^{\infty, \sharp}(B_\infty(R)) } $ for their norms with respect to $(h_{KE})^m$ and $mg_{KE}$ as in Section 4.3 in \cite{S3}. 
We can then apply exactly the same argument in Section 4.3 of \cite{S3} thanks to Corollary \ref{co22}, Corollary \ref{co23} and  Lemma \ref{app}.

\begin{lemma} \label{ll31}   For any $R>0$, there exists  $K_R >0$ such that if  $\sigma \in H^0(X_\infty, m K_{X_\infty})$ for $m\geq 1$, then 
\begin{equation}\label{lll1}
\|\sigma\|_{L^{\infty, \sharp}(B_\infty(R))} \leq K_R \| \sigma\|_{L^{2, \sharp}(B_\infty(2R) )}
\end{equation}
\begin{equation}\label{lll2}
 \|\nabla \sigma \|_{L^{\infty, \sharp}(B_\infty(R)) } \leq K_R \| \sigma \|_{L^{2, \sharp}(B_\infty(2R))} . 
\end{equation}

\end{lemma}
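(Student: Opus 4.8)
The plan is to establish \eqref{lll1} and \eqref{lll2} by a Moser iteration (or De Giorgi–Nash–Moser) argument on the geodesic balls $B_\infty(R) \subset (X_\infty, d_\infty)$, exactly mirroring the mean value inequality used for the Kodaira embedding in Section 4.3 of \cite{S3}. The key input that makes this work on the singular space $X_\infty$ rather than just on a compact smooth manifold is twofold: first, the regular set $\R = \mathcal{R}_X$ is open, dense, convex and of full measure in $(X_\infty, d_\infty)$, so all integrals may be computed there and the Sobolev/volume-doubling theory of Cheeger–Colding applies to $(X_\infty, d_\infty)$ with $Ric \geq -1$; second, the cut-off functions $\rho_\epsilon$ from Lemma \ref{app}, which are supported away from $D$ and have arbitrarily small Dirichlet energy against $\omega_{KE}^{n-1}$, let us integrate by parts on $\R$ without boundary contributions from the singular set.

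First I would recall that for $\sigma \in H^0(X_\infty, mK_{X_\infty})$ the function $u = |\sigma|^2_{(h_{KE})^m}$ is, on $\R$, a smooth nonnegative function, and a direct Bochner-type computation using $Ric(\omega_{KE}) = -\omega_{KE}$ gives a differential inequality of the form $\Delta_{mg_{KE}} u \geq -C_{m,n} u$ on $\R$ (the sign and the precise constant coming from the curvature of $h_{KE}$, whose curvature form is $\omega_{KE}$; note also $|\nabla \sigma|^2 \geq 0$ contributes with a favorable sign). Likewise $v = |\nabla \sigma|^2_{g_{KE},(h_{KE})^m}$ satisfies, by \eqref{pw2} read in the elliptic limit, an inequality $\Delta_{mg_{KE}} v \geq -C_{m,n}(u+v)$, so that $u+v$ is a nonnegative subsolution up to a lower-order term. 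By Corollary \ref{co22} and Corollary \ref{co23} both $u$ and $v$ are already known to be bounded on $\R$, hence in $L^2(B_\infty(2R), \omega_{KE}^n)$, so the iteration starts from a finite quantity; what we must extract is the quantitative bound by $\|\sigma\|_{L^{2,\sharp}(B_\infty(2R))}$ with a constant $K_R$ depending only on $R$ (and $n$, but not on $m$ once the norms are normalized by $mg_{KE}$, as arranged in the definition of the $\sharp$-norms).

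The main technical step is to run Moser iteration for this subsolution on $B_\infty(R) \subset \R \cup \mathcal{S}$ despite the presence of $\mathcal{S}$. Here I would multiply the differential inequality by $\rho_\epsilon^2 w^{2\beta+1}$ (with $w = u+v$, or a cutoff thereof, and $\beta \geq 0$), integrate over $\R$, and integrate by parts; the terms involving $\nabla \rho_\epsilon$ are controlled by $\int |\nabla \rho_\epsilon|^2 w^{2\beta+2} \omega_{KE}^n \leq \|w\|_{L^\infty}^{2\beta+2}\int|\nabla\rho_\epsilon|^2\omega_{KE}^n$, which tends to $0$ as $\epsilon \to 0$ by Lemma \ref{app}(4) since $w$ is already bounded. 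Letting $\epsilon \to 0$ first removes the cutoff and leaves the standard Caccioppoli inequality on geodesic balls in $X_\infty$; then the Sobolev inequality on $(X_\infty, d_\infty)$ (valid because $Ric \geq -1$ and the volume is non-collapsed at $p$, by Cheeger–Colding and Proposition \ref{ghlimgt}, applied with the rescaled metric $mg_{KE}$) feeds the iteration, yielding $\sup_{B_\infty(R)} w \leq K_R \|w\|_{L^1(B_\infty(2R))}^{\phantom{}} \leq K_R' \|w\|_{L^2(B_\infty(2R))}$, which rewritten in the $\sharp$-norms is precisely \eqref{lll1} and \eqref{lll2}. I expect the delicate point to be exactly this exchange of limits — justifying that the singular set contributes nothing, i.e. that $\R$ being a smooth convex full-measure subset together with the boundedness of $u,v$ and the energy decay of $\rho_\epsilon$ suffices to integrate by parts globally on $X_\infty$ — and keeping careful track that the constant $K_R$ depends only on $R$, $n$ and the non-collapsing constant $\kappa$ from Lemma \ref{locesk}, but not on $m$ or on the individual section $\sigma$.
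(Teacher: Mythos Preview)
Your proposal is correct and matches the paper's approach: the paper gives no self-contained proof of this lemma but simply defers to ``exactly the same argument in Section~4.3 of \cite{S3}'' using Corollary~\ref{co22}, Corollary~\ref{co23}, and Lemma~\ref{app}, and what you have written is a faithful outline of that Moser-iteration argument with the $\rho_\epsilon$ cut-offs absorbing the singular set. One small point of notation: since the $\sharp$-norms are taken with respect to $mg_{KE}$ and $(h_{KE})^m$, the constant in the differential inequality $\Delta_{mg_{KE}} u \geq -C\,u$ is actually just $C=n$, independent of $m$; you acknowledge this at the end, but writing $C_{m,n}$ earlier is slightly misleading.
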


The following proposition is essentially Proposition 4.5 in \cite{S3} for solving the following $\dbar$-equation. 

\begin{proposition} \label{lll3} Let $X$ be a smooth minimal manifold of general type. Let $\omega = m\omega_{KE} \in - mc_1(X)$ and $h = (h_{KE})^m$ be a hermitian metric on $(K_X)^m $  for any $m\geq 2$. Then for any smooth $m K_X $-valued $(0,1)$-form $\tau$ satisfying

\begin{enumerate}

\item $\dbar \tau =0$,  

\item $Supp ~\tau \subset \subset \mathcal{R}_X$, 

\end{enumerate}
there exists an $m K_X $-valued section $u$ such that $\dbar u = \tau$ and $$ \int_X |u|^2_h ~\omega^n \leq \frac{1}{\pi} \int_X |\tau|^2_{h, \omega}~ \omega^n. $$

\end{proposition}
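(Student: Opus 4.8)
The plan is to deduce Proposition~\ref{lll3} from Demailly's Theorem~\ref{demailly}, following the proof of Proposition~4.5 in \cite{S3} essentially verbatim; the only genuinely new feature of the present situation is that we do not yet know the K\"ahler-Einstein potential $\varphi_{KE}=\varphi_\infty$ to be bounded — we have only $\varphi_{KE}\le C$ together with the one-sided estimate $\varphi_{KE}\ge \epsilon\log|\sigma_D|^2_{h_D}-C_\epsilon$ of Corollary~\ref{co21} — so one must check that each ingredient used in \cite{S3} survives with this weaker information. The relevant substitutes are Corollaries~\ref{co22} and \ref{co23}, which provide the pointwise bounds for $|\sigma|^2_{(h_{KE})^m}$ and $|\nabla\sigma|^2_{g_{KE},(h_{KE})^m}$ over $\mathcal{R}_X$, and the cut-off functions of Lemma~\ref{app}, which replace those of Lemma~3.7 in \cite{S3}.

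First I would turn this into an $(n,1)$-form problem: since $m\ge 2$, an $mK_X$-valued $(0,1)$-form is canonically an $(m-1)K_X$-valued $(n,1)$-form, so it suffices to solve $\dbar u=\tau$ with $L=(m-1)K_X$ and then reinterpret $u$ as a section of $mK_X$. On $L$ I would put a singular Hermitian metric of the shape $(h_{KE})^{m-1}$ twisted by a small Kodaira-type correction coming from the effective divisor $D$ of Section~2 (recall that $K_X-\epsilon D$ is ample for $0<\epsilon<\epsilon_0$, and that $\chi+\ddbar u_D\ge \omega_0'>0$ for $u_D=\lambda\log|\sigma_D|^2_{h_{D,\lambda}}$ with $\lambda\in(0,\epsilon_0)$): its curvature equals $(m-2)\omega_{KE}+(\chi+\ddbar u_D)\ge \omega_0'>0$ as a current on $X$, using for $m\ge 2$ that $\varphi_{KE}\in PSH(X,\chi)$, which is the positivity Demailly's theorem requires, while the logarithmic pole of $u_D$ makes the metric singular only along $D$. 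I would choose $\lambda$ larger than the $\epsilon$ in Corollary~\ref{co21}, so that when the solution $u$ — a section of $mK_X$, hence an $(m-1)K_X$-valued $(n,0)$-form — is measured with this metric, the associated $(n,n)$-form $\{u,u\}$, whose integral is metric-independent, dominates a fixed positive multiple of $|u|^2_{(h_{KE})^m}\,(m\omega_{KE})^n$; this comparison is precisely where the one-sided bound on $\varphi_{KE}$ enters, in place of the boundedness of $\varphi_{KE}$ used in \cite{S3}.

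Then I would apply Theorem~\ref{demailly} to the given $\tau$: it is $\dbar$-closed, smooth, and supported in a compact subset of $\mathcal{R}_X$, on which $g_{KE}$ and $h_{KE}$ are smooth and non-degenerate (Lemma~\ref{lm26}), so its right-hand side norm is finite; choosing the auxiliary K\"ahler metric in Theorem~\ref{demailly} to agree with $m\omega_{KE}$ away from a small neighborhood of $D$ (adding a completeness-inducing term only near $D$), and then letting the auxiliary parameters tend to $0$ while passing to a weak $L^2$-limit and using lower semicontinuity of the $L^2$-norm, the constant $\frac{1}{2\pi\delta}$ supplied by Demailly becomes $\frac{m}{2\pi(m-1)}$, which is $\le\frac1\pi$ for every $m\ge 2$ — exactly the constant in the statement. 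The cut-off functions of Lemma~\ref{app}, whose gradients carry arbitrarily small $L^2(\omega_{KE})$-mass, are what make the right-hand side converge and the limiting solution admissible, just as in Section~4.3 of \cite{S3}; a standard removable-singularity argument using the $L^2$-bound puts $u$ back on all of $X$ as a smooth $mK_X$-valued section (and no such step is needed at all if one instead applies Demailly's theorem directly on the compact manifold $X$ with the singular metric).

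The step I expect to be the main obstacle is the bookkeeping in the last two paragraphs forced by the degeneracy of $\omega_{KE}$ and the unboundedness of $(h_{KE})^m$: Demailly's theorem demands a curvature lower bound $\ge\delta\,\omega$ with respect to a genuine K\"ahler metric $\omega$, whereas the conclusion is phrased with the degenerate current $m\omega_{KE}$ and with the specific constant $\frac1\pi$. One therefore has to arrange the Kodaira correction $u_D$ and the auxiliary metric so that simultaneously the curvature bound holds globally, nothing is lost on the right-hand side over $\mathrm{Supp}\,\tau$ in the limit, the admissible $\delta$ tends to $\frac{m-1}{m}$, and the mild blow-up $|\sigma_D|^{-2m\epsilon}$ of $(h_{KE})^m$ along $D$ allowed by Corollary~\ref{co21} spoils neither the $L^2$-finiteness nor the comparison of the two norms. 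Granting this, the remainder is a verbatim repetition of the Hormander-Demailly argument of \cite{S3}.
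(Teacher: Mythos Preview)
Your overall plan matches the paper's: write $mK_X=(m-1)K_X+K_X$, apply Theorem~\ref{demailly} with a singular metric carrying a log-pole along $D$, and pass to a limit; you also correctly isolate the only new obstacle, the unboundedness of $\varphi_{KE}$. The gap is in your choice of approximating data and the constant it yields.

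With your metric $(h_{KE})^{m-2}\,h_\chi\,e^{-u_D}$ on $L=(m-1)K_X$, the curvature $(m-2)\omega_{KE}+(\chi+\ddbar u_D)$ does dominate the fixed K\"ahler form $\omega_0'$, but to extract $\delta=\tfrac{m-1}{m}$ against the background $m\omega_{KE}$ you would need $\chi+\ddbar u_D\ge\omega_{KE}$, which has no reason to hold (and for $m=2$ your entire curvature must dominate $\omega_{KE}$ with nothing to spare). Your fallback --- a background agreeing with $m\omega_{KE}$ off $D$ and complete near $D$ --- leaves the compact setting of Theorem~\ref{demailly} and introduces error terms you do not control. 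So as written you get \emph{some} $L^2$-estimate, but not with the constant $\tfrac1\pi$ uniform in $\tau$ and $m$ that the proposition asserts and that the Donaldson--Sun argument in Section~5 needs.

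The paper resolves this with a different approximation. It introduces $\varphi_\epsilon$ solving the perturbed Monge--Amp\`ere equation
\[
\bigl(\chi-\epsilon\epsilon_0\,Ric(h_{D,\epsilon_0})+\ddbar\varphi_\epsilon\bigr)^n=e^{(1+\epsilon)\varphi_\epsilon}\,\Omega,
\]
so that $\omega_\epsilon=\chi-\epsilon\epsilon_0\,Ric(h_{D,\epsilon_0})+\ddbar\varphi_\epsilon$ is a \emph{genuine smooth K\"ahler metric on all of $X$}, and $h_\epsilon:=h_\chi\,e^{-\varphi_\epsilon-\epsilon\epsilon_0\log|\sigma_D|^2}$ satisfies $Ric(h_\epsilon)=\omega_\epsilon$ (plus the nonnegative current $\epsilon\epsilon_0[D]$). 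With $(h_\epsilon)^{m-1}$ on $L$ and $m\omega_\epsilon$ as background one has $\delta=\tfrac{m-1}{m}$ exactly, for every $\epsilon$. The passage to $(h_{KE},\omega_{KE})$ uses the key comparison
\[
\varphi_{KE}\ \ge\ \varphi_\epsilon+\epsilon\epsilon_0\log|\sigma_D|^2_{h_{D,\epsilon_0}}-C,
\]
obtained by the maximum principle (not merely Corollary~\ref{co21}), which gives $h_{KE}\le C\,h_\epsilon$ uniformly and lets one send $\epsilon\to 0$ without loss. This construction of $(\omega_\epsilon,h_\epsilon)$ is the ingredient your proposal is missing.
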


\begin{proof} The proof is almost identical to that of Proposition 3.5 and Proposition 4.5 in \cite{S3}. The only difference is that a priori we do not know if $\varphi_{KE}$ is bounded. Following the proof of Proposition 4.5 in \cite{S3}, we consider the Monge-Ampere equation for sufficiently small $0<\epsilon<\epsilon_0$
$$\left(\chi -\epsilon \epsilon_0 Ric(h_{D, \epsilon_0}) + \ddbar \varphi_\epsilon \right) ^n = e^{(1+ \epsilon)\varphi_\epsilon} \Omega.$$
and let
$$h_\epsilon = e^{- \left(\varphi_\epsilon + \epsilon\epsilon_0 \log |\sigma_D|^2_{h_{D, \epsilon_0}} \right) } h_\chi, ~\omega_\epsilon = \chi - \epsilon\epsilon_0 Ric(h_{D, \epsilon_0}) + \ddbar \varphi_\epsilon, ~ \alpha_\epsilon  =\chi - (1+\epsilon)\epsilon_0 Ric(h_{D, \epsilon_0}). $$
We now can apply the maximum principle with the key observation that there exists $C>0$ such that for all $\epsilon\in (0, \epsilon_0)$
\begin{equation}
\varphi_{KE} \geq \varphi_\epsilon + \epsilon \epsilon_0 \log |\sigma_D|^2_{ h_{D, \epsilon_0}}- C. 
\end{equation}
Then for sufficiently small $\epsilon$, we have
$$Ric(h_\epsilon) = \omega_\epsilon, ~Ric(\omega_\epsilon) = -(1+\epsilon) \omega_\epsilon + \epsilon \alpha_\epsilon \geq -(1+ \epsilon)\omega_\epsilon. $$ Since $\varphi_\epsilon$ converges to $\varphi_\infty$ as $\epsilon \rightarrow 0$,  we can apply  Theorem \ref{demailly} by writing $mK_{X} = (m-1)K_{X} + K_X$. We then can proceed as in the proof of Proposition 3.5 in \cite{S3}.

\end{proof}

\section{Local freeness for  the limiting metric space}

We will apply the same argument as in Section 3.4 and Section 4.4 \cite{S3}.  For completeness, we include  the definition of the $H$-condition introduced by Donaldson-Sun \cite{DS} and the sketch of proof for our main result in Proposition \ref{inject}. 

\begin{definition} We consider the follow data $(p_*, D, U, J, L , g, h, A)$ satisfying 

\begin{enumerate}

\item  $(p_*, U, J, g)$  is an open bounded K\"ahler manifold with a complex structure $J$, a K\"ahler metric $g$ and a base point $p_* \in D\subset \subset U$ for an open set $D$, 

\item $L \rightarrow U$ is a hermitian line bundle equipped with a hermitian metric $h$ and $A$ is the connection induced by the hermitian metric $h$ on $L$, with  its curvature $\Omega(A) = g$.

\end{enumerate}

\noindent The data $(p_*, D, U, L, J, g, h, A)$ is said to satisfy the  $H$-condition if there exist $C>0$ and  a compactly supported smooth section $\sigma: U\rightarrow L$   satisfying 

\begin{enumerate}

\item [$H_1$:]   $\|\sigma \|_{L^2(U)} < (2\pi)^{n/2}$,

\item [$H_2$:]  $|\sigma(p_*) | >3/4$,

\item [$H_3$:] for any holomorphic section $\tau$ of $L$ over a neighborhood of $\overline{D}$,
$$|\tau(p_*)| \leq C (\| \dbar \tau \|_{L^{2n+1}(D)} + ||\tau||_{L^2(D)} ),$$

\item [$H_4$:]  $\|\dbar \sigma \|_{L^2(U)} < \min \left( \frac{1}{8\sqrt{2} C},  10^{-20} \right)$,

\item [$H_5$:]  $||\dbar \sigma ||_{L^{2n+1}(D)} \leq \frac{1}{8C}$.

\end{enumerate}

\end{definition}

Here all the norms are taken with respect to $h$ and $g$. The constant $C$ in the $H$ condition depends on the choice $(p_*, D, U, J, L, g, h)$. 

Fix any point $p$ on $X_\infty$,  $(X_\infty, p, m\omega_\infty)$ converges in pointed Gromov-Hausdorff topology to a tangent cone $C(Y)$ over the cross section $Y$, where $\omega_\infty = \omega_{KE}$. We still use $p$ for the vertex of $C(Y)$.  We write $Y_{reg}$ and $Y_{sing}$ the regular and singular part of $Y$. $Y_{sing}$ has Hausdorff dimension strictly less than $2n-3$.  $C(Y_{reg})\setminus \{p \}$ has a natural complex structure induced from the Gromov-Hausdorff limit and the cone metric $g_C$ on $C(Y)$ is given by
$$g_C = \frac{1}{2} \ddbar r^2 ,$$
where $r$ is the distance function for any point $z\in C(Y)$ to $p$. We can also write the cone metric $g_C= \frac{1}{2} \ddbar |z|^2$.
One considers the trivial line bundle $L_C$ on $C(Y)$ equipped with the connection $A_C$ whose curvature coincides with $g_C$. The curvature of the hermitian metric defined by $h_C=e^{-|z|^2/2}$ is $g_C$.  $1$ is a global section of $L_C$ with its norm equal to  $e^{-|z|^2/2}$ with respect to $h_C$. 
The following lemma is due to \cite{DS}. 

\begin{lemma} \label{hcon} Let $p_* \in C(Y_{reg})$. If $3/4 <e^{ - |p_*|^2}  <1$, then for any $\epsilon>0$, there exists $U\subset \subset C(Y_{reg})\setminus \{p\}$ and an open neighborhood $D\subset\subset U$ of $p_*$ such that 
 $(p_*, D, U, L_C, J_C, g_C, h_C, A_C)$ satisfies the $H$-condition.

\end{lemma}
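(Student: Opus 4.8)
### Proof plan for Lemma \ref{hcon} (the $H$-condition on tangent cones)

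The plan is to follow the Donaldson–Sun construction \cite{DS} essentially verbatim, since the cone $C(Y)$ here has exactly the structure their argument requires: a Ricci-flat (indeed, for the Kähler–Einstein scaled limit) Kähler cone whose regular part $C(Y_{reg})\setminus\{p\}$ is an open complex manifold carrying the trivial line bundle $L_C$ with connection $A_C$ of curvature $g_C = \tfrac12\ddbar|z|^2$, and whose singular set $Y_{sing}$ has real codimension at least $4$ in $Y$ (equivalently, Hausdorff dimension $<2n-3$ in $Y$). First I would fix $p_*\in C(Y_{reg})$ with $3/4<e^{-|p_*|^2}<1$ and take as the candidate section a cutoff of the global section $1$ of $L_C$: set $\sigma=\beta\cdot 1$, where $\beta$ is a smooth function equal to $1$ on a small ball around $p_*$ and supported in a slightly larger open set $U\subset\subset C(Y_{reg})\setminus\{p\}$, chosen so that $D\subset\subset U$ is a neighborhood of $p_*$. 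Conditions $H_1$ and $H_2$ are then immediate from the normalization $3/4<e^{-|p_*|^2}<1$ and from the fact that $\|1\|_{L^2}^2$ over all of $C(Y)$ with respect to $h_C=e^{-|z|^2/2}$ equals $(2\pi)^n\,\mathrm{vol}$ of the appropriate model, so choosing $U$ large enough makes $\|\sigma\|_{L^2(U)}$ as close to $(2\pi)^{n/2}$ as needed, while still $<(2\pi)^{n/2}$.

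The crux is $H_3$, the interior $L^2$-to-pointwise estimate for holomorphic sections over $\overline D$: for any holomorphic $\tau$ on a neighborhood of $\overline D$, $|\tau(p_*)|\le C(\|\dbar\tau\|_{L^{2n+1}(D)}+\|\tau\|_{L^2(D)})$ (the $\dbar$ term is included because $D$ is only a neighborhood, not a ball, and $\tau$ need only be holomorphic near $\overline D$; in practice one applies it to genuinely holomorphic $\tau$ so the first term drops). This is a standard elliptic/mean-value estimate on the fixed smooth Kähler manifold $(U,J_C,g_C)$ with the fixed smooth bundle $(L_C,h_C,A_C)$: the constant $C$ depends only on the chosen data $(p_*,D,U,\dots)$, which is exactly what the definition allows. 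I would obtain it from a sub-mean-value property for $|\tau|^2$ (which is subharmonic since $\tau$ is holomorphic and the curvature is controlled), together with elliptic regularity to pass from $L^2$ to $L^{2n+1}$ control on the $\dbar$-defect. Once $C$ is pinned down, $H_4$ and $H_5$ follow by shrinking the cutoff region: $\dbar\sigma=(\dbar\beta)\otimes 1$ is supported in the thin shell $U\setminus(\text{ball around }p_*)$, and since $Y_{sing}$ has codimension $\ge 4$ one can choose $\beta$ so that both $\|\dbar\beta\|_{L^2}$ and $\|\dbar\beta\|_{L^{2n+1}(D)}$ are arbitrarily small — this is the same capacity/cutoff argument as in Lemma \ref{app}, exploiting that a codimension-$\ge 4$ set has zero $2$-capacity, so in particular the relevant $L^2$ and $L^{2n+1}$ norms of gradients of cutoffs can be driven below the explicit thresholds $\min(\tfrac{1}{8\sqrt2 C},10^{-20})$ and $\tfrac{1}{8C}$.

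The main obstacle is ensuring that the cutoff $\beta$ can simultaneously be taken $\equiv 1$ near $p_*$, supported away from both the vertex $p$ and the singular set $C(Y_{sing})$, \emph{and} have $\dbar\beta$ small in the two norms with constants that do not degenerate — and in particular that the open sets $U,D$ and the constant $C$ in $H_3$ can all be fixed \emph{before} the smallness in $H_4,H_5$ is extracted, so there is no circularity. The order therefore matters: (i) choose $p_*$ and a provisional $U_0\supset D_0\ni p_*$ with closures in $C(Y_{reg})\setminus\{p\}$; (ii) derive $H_3$ on this data, producing $C=C(p_*,D_0,U_0,\dots)$; (iii) then, keeping $C$ fixed, enlarge/adjust only the "outer" part of $U$ (away from $\overline{D_0}$, hence without affecting $C$) and choose $\beta$ using the codimension-$\ge4$ cutoff to meet $H_1,H_4,H_5$, and verify $H_2$ is untouched. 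The verification that removing a neighborhood of $C(Y_{sing})$ costs arbitrarily little in the $L^2$ and $L^{2n+1}$ norms of $|\nabla\beta|$ is where the Hausdorff-dimension bound on $Y_{sing}$ (equivalently $\mathcal S$) from Proposition \ref{ghlimgt} is essential; everything else is the verbatim Donaldson–Sun argument, and I would simply cite \cite{DS} for the detailed estimates rather than reproduce them.
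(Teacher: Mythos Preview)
Your proposal is correct and aligns with the paper's treatment: the paper does not give its own proof of this lemma but simply attributes it to Donaldson--Sun \cite{DS}, and your sketch faithfully outlines that construction (cutoff of the Gaussian section $1$, mean-value estimate for $H_3$, and the codimension-$\ge 4$ cutoff argument for $H_4,H_5$). If anything, you have supplied more detail than the paper does; citing \cite{DS} as you propose is exactly what the paper does.
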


By the construction in \cite{DS}, we can always assume that both $D$ and $U$ are a product in $C(Y_{reg})\setminus \{p \}$, i.e., there exist $D_Y$ and $U_Y \subset Y_{reg}$ such that $D=\{ z=(y, r) \in C(Y)~|~y \in D_Y, ~r\in (r_D, R_D)\}$ and  $U=\{ z=(y, r) \in C(Y)~|~y \in U_Y, ~r\in (r_U, R_U)\}$. Suppose $(p_*, D, U, L_C, J_C, g_C, h_C , A_C)$ satisfies the $H$-condition from Lemma \ref{hcon}. For any $m\in \mathbb{Z}^+$, we can define 
\begin{equation}\label{rescue}
U(m) = \{ z=(y, r)\in C(Y)~|~y\in U_Y, ~ r\in (m^{-1/2} r_U, m^{1/2} R_U) \} 
\end{equation}
and
$\mu_m: U \rightarrow U(m)$ by
$$ \mu_m(z) = m^{-1/2} z. $$

The following proposition from \cite{DS} establishes the stability of the $H$-condition for perturbation of the curvature and the complex structure. 

\begin{proposition}  \label{hcon2} Suppose $(p_*, D, U, J_C, L_C, g_C, h_C, A_C )$ constructed as above in Lemma \ref{hcon} satisfies the $H$-condition. There exist $\epsilon>0$ and $m \in \mathbb{Z^+}$ such that for any collection of data $(p_*, D, U, J, g, h, A)$ if 
$$|| g - g_C ||_{C^0(U(m)) } + ||J-J_C ||_{C^0(U(m)) } < \epsilon, $$
then for some $1\leq l \leq m$, 
$$(p_*, D, U, \mu_l^*J, \mu_l^*L, \mu_l^*g, \mu_l^*h, \mu_l^*A)  $$
satisfies the $H$-condition.

\end{proposition}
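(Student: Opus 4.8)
The plan is to reproduce the stability argument of Donaldson--Sun \cite{DS} (also carried out in Sections~3.4 and 4.4 of \cite{S3}); it splits into an openness statement for the $H$-condition and a pigeonhole over scales. For the openness I would first fix, once and for all, a compactly supported test section $\sigma_C$ witnessing the $H$-condition for the model cone data, namely the tautological section $\mathbf{1}$ of $L_C$ cut off near $\partial U$ --- at large radius, where $|\mathbf{1}|_{h_C}=e^{-r^2/2}$ is exponentially small, and near the vertex and the cone over $Y_{sing}$, where the ambient volume is negligible --- so that $\sigma_C$ satisfies $H_1$, $H_2$, $H_4$, $H_5$ with room to spare. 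The functionals entering $H_1$, $H_2$, $H_4$, $H_5$ are continuous in the $C^0(U(m))$-distance of $(J,g,h)$ to the model, so they persist after a transplantation that alters these quantities by $O(\epsilon)$. The one condition that is not automatic is $H_3$, the Bergman-type bound $|\tau(p_*)|\leq C(\|\dbar\tau\|_{L^{2n+1}(D)}+\|\tau\|_{L^2(D)})$; I would obtain it from the interior elliptic (mean-value) estimate for $\dbar$ with controlled torsion and curvature, using that a section holomorphic for the perturbed $J$ is an $O(\epsilon\|\tau\|)$-approximate solution of the cone $\dbar$-equation, so the cone estimate applies with $C$ enlarged by a fixed factor once $\epsilon$ is small.

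The reason the conclusion is stated with a rescaling $\mu_l$ for some $1\leq l\leq m$, rather than with the identity $l=1$, is that the test section for the perturbed data must be transplanted from the cone through an approximate isometry and an approximate bundle isomorphism that are only available on the large annular region $U(m)$, and the Chern connection of the perturbed line bundle --- whose curvature is the perturbed K\"ahler form, hence $O(\epsilon)$-close to the cone curvature --- cannot in general be brought into the model gauge $A_C$ over all of $U(m)$ at once. The obstruction is the accumulated holonomy, which is controlled only in an averaged sense over the $m$ sub-annuli $\mu_l(U)\subset U(m)$ (recall $\mu_l(z)=l^{-1/2}z$), so a pigeonhole argument selects a scale $l\in\{1,\dots,m\}$ on which the residual connection mismatch --- and hence $\|\dbar\sigma\|_{L^2(U)}$ and $\|\dbar\sigma\|_{L^{2n+1}(D)}$ for the section transplanted at that scale --- falls below the $H_4$- and $H_5$-thresholds attached to the enlarged constant of the first step.

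To conclude I would fix the model data and its $H$-constant $C_0$ from Lemma~\ref{hcon}, choose $m$ large enough that $1/m$ is small compared with $\min(\tfrac{1}{16\sqrt2\,C_0},10^{-20})$ and $\tfrac{1}{16C_0}$, and then choose $\epsilon$ small enough that the elliptic estimate keeps the perturbed $H_3$-constant below $2C_0$ while every transplantation error is $O(1/m)$; the pigeonhole scale $l$ then yields data $(p_*,D,U,\mu_l^*J,\mu_l^*L,\mu_l^*g,\mu_l^*h,\mu_l^*A)$ for which $H_1$ through $H_5$ all hold, with $m$ and $\epsilon$ depending only on the fixed cone data. I expect the scale-selection step to be the main obstacle: one has to quantify how the holonomy of the Chern connection of the perturbed bundle, measured against $L_C$, is distributed over the $m$ scales and argue that at least one of them is clean. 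This is exactly the content of \cite{DS}, and since their argument invokes nothing beyond the $C^0$-closeness hypothesis it transfers to the present non-collapsed K\"ahler--Einstein setting without change, which is why the proof here can be, and is, given as a reference to \cite{DS, S3}.
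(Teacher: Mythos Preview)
Your proposal is correct and aligns with the paper's treatment: the paper gives no proof of this proposition at all, simply introducing it with the remark that it ``is from \cite{DS}'' and establishes the stability of the $H$-condition under perturbation. Your sketch of the Donaldson--Sun argument (openness of $H_1$--$H_5$ plus the scale-selection pigeonhole for the connection holonomy) is a faithful expansion of what is being cited, and your closing observation that the proof here ``can be, and is, given as a reference to \cite{DS, S3}'' is exactly right.
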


Fix any point $p$, we can assume that $(X_\infty, p, m_v g_\infty)$ converges to a tangent cone $C(Y)$ for some sequence $m_v$ in pointed Gromov-Hausdorff topology. In particular, on the regular part of $C(Y)$, the convergence is locally $C^{2, \alpha}$ and the metrics $m_v g_\infty$ converge locally in $C^{1, \alpha}$.  Fix any open set $U\subset\subset C(Y_{reg})\setminus \{p\}$, This  would induce embeddings $\chi_{m_v}: U \rightarrow \mathcal{R}= (X_\infty)_{reg}$. Let $g_{m_v}$ be the pullback metric of $g_\infty$ on $(X_\infty)_{reg}$ and $J_{m_v}$ be the pullback complex structure. The following lemma follows from the convergence of $(X_\infty, p, m_v g_\infty)$.

\begin{lemma} \label{closen} There exists $v$ such that one can find an embedding $\chi_{m_v}$such that 

\begin{enumerate}

\item $2^{-1} |z| \leq (m_v)^{1/2} d_\infty(p, \chi_{m_v}(z)) \leq 2   |z|$,

\smallskip 

\item $|| \chi_{m_v}^*(m_v g_\infty ) - g_C ||_{C^0(U)} + ||\chi^*_{m_v} J_\infty - J_0||_{C^0(U)} \leq \epsilon $,
\smallskip

\end{enumerate}
where $d_\infty$ is the metric on $X_\infty$ induced from $g_\infty$.

\end{lemma}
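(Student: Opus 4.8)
The plan is to obtain Lemma \ref{closen} directly from the construction of the tangent cone $C(Y)$ as a pointed Gromov--Hausdorff limit of the rescaled spaces $(X_\infty, p, m_v g_\infty)$, combined with the $\epsilon$-regularity theory on the regular part. Note first that rescaling is harmless for the curvature bound: since the Ricci tensor is invariant under scaling the metric by a positive constant, $\mathrm{Ric}(m_v g_\infty) = \mathrm{Ric}(g_\infty) = -g_\infty = -m_v^{-1}(m_v g_\infty)$, so the blow-ups have Ricci curvature tending to $0$ and the cone $C(Y)$ is Ricci-flat on its regular part. By the structure theory of Cheeger--Colding \cite{CC1, CC2, CCT} there are pointed Gromov--Hausdorff $\delta_v$-approximations $\Phi_v\colon (X_\infty, p, m_v^{1/2} d_\infty) \to (C(Y), p, d_C)$ with $\delta_v \to 0$, and on compact subsets of $C(Y_{reg})\setminus\{p\}$ the harmonic radius of $m_v g_\infty$ is bounded below (Anderson-type $\epsilon$-regularity, in the almost K\"ahler--Einstein framework of Tian--Wang \cite{TW}). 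Hence the Gromov--Hausdorff convergence upgrades, on $C(Y_{reg})\setminus\{p\}$, to $C^{1,\alpha}$ (indeed interior-$C^\infty$) convergence of metrics, and, because the complex structure is parallel and thus coupled to the metric through an elliptic system, to $C^{1,\alpha}$ convergence of complex structures as well — this is the same input exploited in \cite{DS}.

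Concretely, I would fix the precompact set $U \subset\subset C(Y_{reg})\setminus\{p\}$ and, for $v$ large, take the embeddings $\chi_{m_v}\colon U \to \mathcal{R} = (X_\infty)_{reg}$ provided by this smooth convergence; their images lie in $\mathcal{R}$ because regularity is an open condition preserved under the convergence and $U$ is a precompact piece of the regular locus, and they can be chosen so that $\chi_{m_v}$ is itself a $2\delta_v$-Gromov--Hausdorff approximation onto its image. Then $\|\chi_{m_v}^*(m_v g_\infty) - g_C\|_{C^0(U)} + \|\chi_{m_v}^* J_\infty - J_0\|_{C^0(U)} \to 0$ as $v \to \infty$, which yields item (2) after choosing $v$ with the left side below $\epsilon$. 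For item (1), I would use that $|z| = d_C(p,z) \geq c_0 > 0$ on the compact set $U$ together with the approximation property $|\,m_v^{1/2} d_\infty(p, \chi_{m_v}(z)) - |z|\,| \le 2\delta_v$; once $2\delta_v < c_0/2$ this forces $\tfrac12 |z| \le m_v^{1/2} d_\infty(p, \chi_{m_v}(z)) \le \tfrac32 |z| \le 2|z|$ for all $z \in U$. Enlarging $v$ so that both estimates hold simultaneously completes the argument.

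The only genuinely nontrivial step is the passage from pointed Gromov--Hausdorff convergence to smooth convergence away from the singular set — i.e. the uniform lower bound on the harmonic radius on compact subsets of $C(Y_{reg})\setminus\{p\}$ — but this is exactly what the Cheeger--Colding structure theory combined with the $\epsilon$-regularity for almost K\"ahler--Einstein metrics of \cite{TW} delivers, and the closeness of complex structures then follows from standard elliptic estimates for the coupled $(g,J)$ system as in \cite{DS}. Everything else (that $\chi_{m_v}$ is an embedding, that its image lands in $\mathcal{R}$, and the distance comparison with the vertex $p$) is routine bookkeeping around these facts.
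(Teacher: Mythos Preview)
Your proposal is correct and matches the paper's approach: the paper does not give a detailed proof, stating only that the lemma ``follows from the convergence of $(X_\infty, p, m_v g_\infty)$'' after noting in the preceding paragraph that this convergence is $C^{2,\alpha}$ on the regular part of $C(Y)$; your argument is precisely the routine unpacking of that sentence. One minor remark: at this stage you are working on the limit space $X_\infty$, whose regular part already carries a genuine K\"ahler--Einstein metric $g_\infty$ (Proposition~\ref{ghlimgt}), so the $\epsilon$-regularity and smooth convergence on $U\subset\subset C(Y_{reg})\setminus\{p\}$ follow from Anderson's harmonic-radius bound for Einstein metrics together with Cheeger--Colding theory, and the appeal to the almost-Einstein framework of \cite{TW} is not strictly needed here.
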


The following is the main result in this section. 

\begin{proposition}\label{inject}   For any point $q \in X_\infty$,  there exist $m\in \mathbb{Z}^+$ and  a holomorphic section $\sigma \in H^0(X_\infty, m K_{X_\infty})$ such that 
$$|\sigma|_{(h_\infty)^m}^2(q)\neq 0, $$
where $h_\infty= h_{KE}= h_\chi e^{-\varphi_{KE}}.$

\end{proposition}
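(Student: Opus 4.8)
The plan is to combine Donaldson--Sun's $H$-condition machinery (Lemma \ref{hcon}, Proposition \ref{hcon2}, Lemma \ref{closen}) with the $L^2$-solution of the $\dbar$-equation on the singular space (Proposition \ref{lll3}) and the uniform pointwise bounds of Corollary \ref{co22}, Corollary \ref{co23}, exactly as in Section 3.4 and 4.4 of \cite{S3}. First I would fix the point $q\in X_\infty$ and pass to a tangent cone $C(Y)$ at $q$: by the results of Section 3 (via Tian--Wang), $(X_\infty, q, m_v g_\infty)$ converges in pointed Gromov--Hausdorff topology to $C(Y)$ for some sequence $m_v\to\infty$, with $C^{2,\alpha}$ convergence of metrics and $C^{1,\alpha}$ convergence of complex structures on the regular part $C(Y_{reg})\setminus\{p\}$. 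Choose $p_*\in C(Y_{reg})$ with $3/4 < e^{-|p_*|^2} < 1$, so that Lemma \ref{hcon} produces product sets $D\subset\subset U\subset\subset C(Y_{reg})\setminus\{p\}$ for which $(p_*, D, U, L_C, J_C, g_C, h_C, A_C)$ satisfies the $H$-condition.

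Next I would transport the $H$-condition to $X_\infty$. Applying Proposition \ref{hcon2} with the perturbation controlled by Lemma \ref{closen}, for $v$ large enough the pulled-back data $(p_*, D, U, \chi_{m_v}^*J_\infty, \chi_{m_v}^*(m_v g_\infty), \ldots)$ — after a possible rescaling $\mu_l$ with $1\le l\le m_v$ — still satisfies the $H$-condition, where now the line bundle is $(K_X)^{m}$ for the appropriate $m$ comparable to $m_v$ and the hermitian metric is $(h_{KE})^m$ (whose curvature is $m\omega_{KE}$, matching the normalization $\Omega(A)=g$ in the $H$-condition). The compactly supported section $\sigma$ from $H_1$--$H_5$, pushed forward into $\mathcal{R}_X=\mathcal{R}$ via $\chi_{m_v}$ and cut off (using Lemma \ref{app} to make the $\dbar$-error small on the singular set as well) gives a smooth $m K_X$-valued $(0,1)$-form $\tau = \dbar\sigma$ with $\dbar\tau=0$, $\mathrm{Supp}\,\tau\subset\subset\mathcal{R}_X$, and $\|\tau\|_{L^2}$ small. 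By Proposition \ref{lll3} there is a solution $u$ with $\dbar u = \tau$ and $\int_X |u|^2_h\,\omega^n \le \pi^{-1}\int_X |\tau|^2_{h,\omega}\,\omega^n$, so $\|u\|_{L^2}$ is small; then $s := \sigma - u$ is a genuine holomorphic section in $H^0(X_\infty, mK_{X_\infty})$. The pointwise estimate $H_3$, which holds for the limiting data by Lemma \ref{ll31} (the $L^\infty$-from-$L^2$ bound) combined with $H_4, H_5$, forces $|u(q)|$ to be strictly less than the lower bound $|\sigma(p_*)|>3/4$ on $|s(q)|$, so $|s|^2_{(h_\infty)^m}(q)\neq 0$.

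The main obstacle — and the reason the statement is nontrivial beyond \cite{S3} — is that a priori we do not know $\varphi_{KE}$ is bounded, so the hermitian metric $h_{KE}=h_\chi e^{-\varphi_{KE}}$ could be badly singular near $D$, and the cut-off/$L^2$ estimates must be carried out on the possibly incomplete, singular space $(\mathcal{R}_X, g_\infty)$ rather than on a genuine canonical model. This is precisely what Corollary \ref{co22}, Corollary \ref{co23}, Lemma \ref{app}, and the perturbed-metric version of the $\dbar$-estimate in Proposition \ref{lll3} are designed to handle: the lower bounds $\varphi_{KE}\ge \epsilon\log|\sigma_D|^2_{h_D} - C_\epsilon$ and the uniform bounds on $|\sigma|^2_{(h_{KE})^m}$, $|\nabla\sigma|^2$ guarantee that the relevant $L^2$-norms are finite and that no mass escapes to $D\cup(X_\infty\setminus\mathcal{R})$. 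I would therefore spend most of the proof verifying that the cut-off construction and the solution $u$ of Proposition \ref{lll3} together give a section whose restriction to $\mathcal{R}_X$ extends across the singular set of $X_\infty$ — which follows because holomorphic sections with finite $L^2$ norm extend across sets of small codimension — and that the pointwise comparison in $H_3$ applies with constants uniform in $q$, exactly as in the proof of Proposition 4.5 and the conclusion of Section 4.4 in \cite{S3}.
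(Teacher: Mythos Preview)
Your proposal is correct and follows exactly the same approach as the paper, which simply cites Proposition 3.9 and Proposition 4.6 of \cite{S3} and the Donaldson--Sun $H$-condition machinery you describe. One small clarification: the $H$-condition yields nonvanishing at the transported point $\chi_{m_v}(p_*)$, not directly at $q$; you then pass to $q$ via the uniform gradient bound (Lemma \ref{ll31}, Corollary \ref{exten}) since $d_\infty(q,\chi_{m_v}(p_*))\to 0$ under the rescaling.
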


\begin{proof}  The proposition can be proved  exactly the same argument for Proposition 3.9 and Proposition 4.6 in \cite{S3}. 
\end{proof}

We remark that from Proposition \ref{inject}, we cannot conclude $\sigma(q)\neq 0$, because $\varphi_{KE}$ might tends to $-\infty$ near $q$.

\section{Global freeness}

We will complete the proof for Theorem \ref{main1} in this section by proving freeness of $m K_X$ at any fixed point $q\in X$ for some sufficiently large $m$ depending on $q$.

First we want to show that for any fixed point $q$ on $D$, the distance from $q$ to a fixed point $p\in \mathcal{R}_X$ is uniformly bounded for all $g_k$. We  consider the a log resolution 
$$\pi_1: Z \rightarrow X$$
 such that $\pi_1^{-1} (D)$ is   a union of smooth divisors with simple normal crossings and there exists $O$ in the smooth part of $\pi_1^{-1}(D)$ with $\pi_1(O)=q$. We then blow up $Z$ at $O$ with 
 $$\pi_2: \tilde X \rightarrow Z.$$ Let 
 $$\tilde \pi = \pi_1\circ\pi_2: \tilde X \rightarrow X. $$
We have the following adjunction formula because $X$ is smooth
$$K_{\tilde X} = \tilde\pi^* K_X + (n-1) E  + F, ~F=\sum_{j=1}^J a_j [F_j], $$
where $(n-1)E+ F$ is the exceptional divisor of $\tilde \pi$, $F_j$ are effective prime smooth divisors on $\tilde X$ with $ a_j > 0$ for $j=1, ..., J$, $E$ is the exceptional divisor of $\pi_2$ isomorphic to $\mathbb{CP}^{n-1}$.

Since $\tilde \pi^* K_X $ is big and nef, by Kodaira's lemma, there exists an effective divisor $\tilde D$ such that its support coincides with the support of the exceptional divisors of $\tilde\pi$ and 
$$\tilde \pi ^*K_X - \epsilon \tilde D$$ is ample for all sufficiently small $\epsilon>0$. We can always assume that $\tilde D = \tilde D' + E$, where the effective divisor $D'$ does not contain $E$.   Let $\sigma_E$, $\sigma_F$ and $\sigma_{\tilde D}$  be the defining sections of $E$, $F$ and  $\tilde D$. Here we consider $\sigma_E$, $\sigma_F$ and $\sigma_{\tilde D}$ be the multivalued holomorphic sections which become  global holomorphic sections after taking some power. Let $h_E, h_F, h_{\tilde D} $ be smooth hermitian metrics on the line bundles associated to $E$, $F$ and $\tilde D$ such that 
$$ (\tilde\pi)^* \Omega = |\sigma_E|^{2(n-1)}_{h_E} |\sigma_F |^2_{h_F} \tilde \Omega, ~\tilde \chi - \epsilon_0 Ric(h_{\tilde D, \epsilon_0}) >0 $$ for a smooth volume form $\tilde \Omega$ on $\tilde X$ and for some fixed sufficiently small $\epsilon_0>0$, where $\tilde \chi = ( \tilde \pi)^*\chi$.

Let $\tilde \omega$ be a fixed smooth K\"ahler form on $\tilde X$. Then the K\"ahler-Einstein equation lifted to $\tilde X$ is equivalent to the following degenerate Monge-Ampere equation
$$(\tilde\chi + \ddbar \tilde \varphi_{KE})^n = e^{\tilde \varphi_{KE} } (\tilde \pi)^*  \Omega, $$
where $\tilde\varphi_{KE} = (\tilde\pi)^*\varphi_{KE}.$
We consider the following family of Monge-Ampere equations
\begin{equation}\label{4365}
((1-e^{-k}) \tilde\chi + e^{-k} \tilde \pi^* \omega_0 + \epsilon \tilde\omega + (1-e^{-k})\ddbar \tilde \varphi_{k,\epsilon} )^n = e^{\tilde\varphi_{k,\epsilon} }  (|\sigma_E|^{2(n-1)}_{h_E} +\epsilon) (|\sigma_F|^2_{h_F}+\epsilon) \tilde \Omega, 
\end{equation}
where $\omega_0$ is the K\"ahler form defined as in equation (\ref{refmc}). 
Let 
$$\tilde\omega_{k, \epsilon }= (1- e^{-k}) \tilde \chi + e^{-k}  \omega_0 + \epsilon \tilde\omega + (1-e^{-k}) \ddbar \tilde \varphi_{k,\epsilon}. $$ By Yau's theorem, equation (\ref{4365}) always admits a unique smooth solution $\tilde\varphi_{k,\epsilon}$ for all sufficiently small $\epsilon>0$. 

The following lemma corresponds to Lemma 4.9 in \cite{S3}. 

\begin{lemma} There exists $A>0$ such that for all $k\in \mathbb{Z}^+$ and $\epsilon \in (0, 1)$,

$$Ric(\tilde\omega_{k,\epsilon} ) \leq - (1- e^{-k}) \tilde\omega_{k,\epsilon} + A \tilde \omega. $$

\end{lemma}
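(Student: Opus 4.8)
The plan is to read off $Ric(\tilde\omega_{k,\epsilon})$ directly from the Monge--Amp\`ere equation $(\ref{4365})$ and then absorb every term except $-(1-e^{-k})\tilde\omega_{k,\epsilon}$ into $A\tilde\omega$, the only nontrivial input being a curvature estimate for the $\epsilon$-regularized degenerate weights that is uniform in $\epsilon$. By the very definition of $\tilde\omega_{k,\epsilon}$, equation $(\ref{4365})$ says exactly
$$\tilde\omega_{k,\epsilon}^n = e^{\tilde\varphi_{k,\epsilon}}\big(|\sigma_E|^{2(n-1)}_{h_E}+\epsilon\big)\big(|\sigma_F|^2_{h_F}+\epsilon\big)\tilde\Omega.$$
Applying $-\ddbar\log$ to both sides and writing the left side as $Ric(\tilde\omega_{k,\epsilon}) + \ddbar\log\tilde\Omega$, I obtain
$$Ric(\tilde\omega_{k,\epsilon}) = -\ddbar\tilde\varphi_{k,\epsilon} - \ddbar\log\big(|\sigma_E|^{2(n-1)}_{h_E}+\epsilon\big) - \ddbar\log\big(|\sigma_F|^2_{h_F}+\epsilon\big) - \ddbar\log\tilde\Omega.$$
Using $(1-e^{-k})\ddbar\tilde\varphi_{k,\epsilon} = \tilde\omega_{k,\epsilon} - (1-e^{-k})\tilde\chi - e^{-k}\omega_0 - \epsilon\tilde\omega$ to eliminate $\ddbar\tilde\varphi_{k,\epsilon}$, and noting $0 < 1-e^{-k} \le 1$ so that $-(1-e^{-k})^{-1}\tilde\omega_{k,\epsilon} \le -(1-e^{-k})\tilde\omega_{k,\epsilon}$, the problem is reduced to bounding
$$\tilde\chi + (1-e^{-k})^{-1}\big(e^{-k}\omega_0 + \epsilon\tilde\omega\big) - \ddbar\log\tilde\Omega - \ddbar\log\big(|\sigma_E|^{2(n-1)}_{h_E}+\epsilon\big) - \ddbar\log\big(|\sigma_F|^2_{h_F}+\epsilon\big)$$
from above by $A\tilde\omega$ with $A$ independent of $k\in\mathbb{Z}^+$ and $\epsilon\in(0,1)$.

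The core of the argument is the last two terms, which I would handle by the following elementary but key estimate: if $s$ is a holomorphic section of a line bundle over $\tilde X$ with a \emph{fixed} smooth Hermitian metric $h$ and $u := |s|^2_h$, then, starting from $\ddbar u = -u\,Ric(h) + u^{-1}\sqrt{-1}\,\partial u\wedge\dbar u$ away from $\{s=0\}$, a direct computation gives
$$\ddbar\log(u+\epsilon) = -\frac{u}{u+\epsilon}\,Ric(h) + \frac{\epsilon}{u(u+\epsilon)^2}\,\sqrt{-1}\,\partial u\wedge\dbar u \geq -\frac{u}{u+\epsilon}\,Ric(h),$$
the identity persisting across $\{s=0\}$ since $u^{-1}\sqrt{-1}\,\partial u\wedge\dbar u$ extends smoothly there. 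As $\frac{u}{u+\epsilon}\in[0,1)$ and $Ric(h)$ is a fixed smooth $(1,1)$-form, this yields $-\ddbar\log(u+\epsilon) \le C_h\,\tilde\omega$ with $C_h$ independent of $\epsilon$. I then apply this with $s = \sigma_E^{\otimes(n-1)}$, $h = h_E^{\otimes(n-1)}$ (so $u = |\sigma_E|^{2(n-1)}_{h_E}$, $Ric(h) = (n-1)Ric(h_E)$) and with $s = \sigma_F$, $h = h_F$. The remaining pieces are harmless: $\tilde\chi = \tilde\pi^*\chi$ and $-\ddbar\log\tilde\Omega = Ric(\tilde\Omega)$ are fixed smooth forms, hence $\le A\tilde\omega$; and for $k\in\mathbb{Z}^+$, $\epsilon\in(0,1)$ one has $(1-e^{-k})^{-1} \le (1-e^{-1})^{-1}$, so $(1-e^{-k})^{-1}(e^{-k}\omega_0 + \epsilon\tilde\omega) \le A\tilde\omega$ as well. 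Summing the constants gives the lemma; this is the analogue of Lemma~4.9 in \cite{S3}, the only new feature being the $\epsilon$-uniformity just described.

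The step I expect to be the main obstacle is exactly this curvature identity for $\ddbar\log(|s|^2_h+\epsilon)$: a priori the term $-\sqrt{-1}\,\partial u\wedge\dbar u/(u+\epsilon)^2$ produced by differentiating the logarithm is strongly negative near the zero divisors $E$ and $F$ and degenerates as $\epsilon\to0$, so no crude bound suffices. The point is its exact (in fact over-) cancellation with the $u^{-1}\sqrt{-1}\,\partial u\wedge\dbar u$ contribution hidden inside $\ddbar u/(u+\epsilon)$, which leaves only the bounded term $-\frac{u}{u+\epsilon}Ric(h)$ plus a \emph{nonnegative} remainder; verifying that this holds as an identity of smooth forms globally on $\tilde X$ (i.e.\ extends across $E\cup F$) is the one place requiring care, everything else being routine bookkeeping with the factors $1-e^{-k}$.
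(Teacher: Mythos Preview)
Your proof is correct and is exactly the standard computation one expects; the paper itself gives no proof here, only referring to Lemma~4.9 in \cite{S3}, and your argument is the natural fleshing-out of that reference. The identification of the $\epsilon$-uniform lower bound $\ddbar\log(|s|^2_h+\epsilon)\ge -\tfrac{|s|^2_h}{|s|^2_h+\epsilon}Ric(h)$ as the only nontrivial point is accurate, and your verification (including the smooth extension of $u^{-1}\sqrt{-1}\,\partial u\wedge\dbar u$ across $\{s=0\}$, which follows since locally $\partial u = \bar f\,(\partial f - f\,\partial\phi)e^{-\phi}$ so that $u^{-1}\sqrt{-1}\,\partial u\wedge\dbar u = e^{-\phi}\sqrt{-1}\,(\partial f - f\,\partial\phi)\wedge\overline{(\partial f - f\,\partial\phi)}$) is fine.
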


\begin{lemma}  \label{483} Let  $\tilde\varphi_{k,\epsilon}$ be the smooth solution for the  equation (\ref{4365}) for $k\in \mathbb{Z}^+$ and $\epsilon\in (0, 1)$. Then there exists $C>0$ and 
for any $\delta>0$, there exists $C_\delta>0$ such that for  all $k\in \mathbb{Z}^+$ and $\epsilon\in (0,1)$
\begin{equation}\label{c06}
-\delta \log |\sigma_{\tilde D}|^2_{h_{\tilde D}} - C_\delta \leq \tilde \varphi_{k,\epsilon} \leq C
\end{equation}
and there exist $\lambda$, $C>0$ such that for all $k>0$ and $\epsilon\in (0,1)$, 
\begin{equation}\label{c26}
\tilde \omega_{k,\epsilon} \leq C   \left( |\sigma_{\tilde D}|_{h_{ \tilde D}}^{2} \right)^{-\lambda} \tilde \omega.
\end{equation}
Furthermore, $\tilde \varphi_{k, \epsilon}$ converges to $\tilde\varphi_{KE}$  smoothly on $\tilde X \setminus   \tilde D$ as $k^{-1}, \epsilon \rightarrow 0$.

\end{lemma}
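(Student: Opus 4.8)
\textbf{Proof proposal for Lemma \ref{483}.} The plan is to run the same Tsuji-type maximum principle arguments used in Lemma \ref{prop21} and Lemma \ref{lm31}, but now on the resolution $\tilde X$ where the Monge--Ampère equation has been regularized by the $\epsilon\tilde\omega$ term and the $\epsilon$-perturbations of the degenerate right hand side. First I would establish the upper bound $\tilde\varphi_{k,\epsilon}\leq C$: since the reference class $[(1-e^{-k})\tilde\chi + e^{-k}\omega_0 + \epsilon\tilde\omega]$ is uniformly bounded and the right hand side $(|\sigma_E|_{h_E}^{2(n-1)}+\epsilon)(|\sigma_F|_{h_F}^2+\epsilon)\tilde\Omega$ is bounded above by a fixed volume form (after normalizing the hermitian metrics so $|\sigma_E|_{h_E},|\sigma_F|_{h_F}\leq 1$), the maximum principle applied at the maximum of $\tilde\varphi_{k,\epsilon}$ gives $e^{\tilde\varphi_{k,\epsilon}}\cdot(\text{bounded volume form}) = (\tilde\omega_{k,\epsilon})^n \leq (\text{bounded class})^n$, hence $\tilde\varphi_{k,\epsilon}$ is bounded above uniformly.

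Next I would prove the lower bound. Fix $\delta>0$ and apply the maximum principle to $\tilde\varphi_{k,\epsilon} - \delta\log|\sigma_{\tilde D}|^2_{h_{\tilde D,\epsilon_0}}$ (after shrinking $\delta$ below $\epsilon_0$), exactly as in the second statement of Lemma \ref{prop21}. At an interior minimum of this quantity the $\ddbar\log|\sigma_{\tilde D}|^2$ term can be absorbed using that $\tilde\chi - \epsilon_0 Ric(h_{\tilde D,\epsilon_0})>0$, and the degenerate factors on the right hand side only help since $|\sigma_E|^{2(n-1)}_{h_E}+\epsilon$ and $|\sigma_F|^2_{h_F}+\epsilon$ are bounded. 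This yields $\tilde\varphi_{k,\epsilon}\geq \delta\log|\sigma_{\tilde D}|^2_{h_{\tilde D}} - C_\delta$ with $C_\delta$ independent of $k,\epsilon$; combined with the upper bound this gives (\ref{c06}). For (\ref{c26}) I would apply the maximum principle to $\log \operatorname{tr}_{\tilde\omega}(\tilde\omega_{k,\epsilon}) - A\big(A\tilde\varphi_{k,\epsilon} - \log|\sigma_{\tilde D}|^2_{h_{\tilde D,1/A}}\big)$ for a fixed large $A$, using the parabolic/elliptic Schwarz lemma computation: the curvature bound from the preceding lemma, $Ric(\tilde\omega_{k,\epsilon})\leq -(1-e^{-k})\tilde\omega_{k,\epsilon} + A\tilde\omega$, controls the bad terms, and the $\log|\sigma_{\tilde D}|^2$ weight kills the boundary blow-up so the maximum is attained in the interior, where $\operatorname{tr}_{\tilde\omega}(\tilde\omega_{k,\epsilon})$ is bounded; exponentiating and using (\ref{c06}) gives the stated power of $|\sigma_{\tilde D}|^2_{h_{\tilde D}}$.

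For the final convergence statement, the estimates (\ref{c06}) and (\ref{c26}) give uniform $C^0$ and $C^2$ bounds on $\tilde\varphi_{k,\epsilon}$ on compact subsets of $\tilde X\setminus\tilde D$; standard third-order estimates (as cited for Lemma \ref{lm21}, via \cite{SW} or \cite{PSeS}) followed by local Schauder/elliptic bootstrapping yield uniform $C^l_{loc}(\tilde X\setminus\tilde D)$ bounds for every $l$. Hence along any subsequence $\tilde\varphi_{k,\epsilon}$ converges in $C^\infty_{loc}(\tilde X\setminus\tilde D)$ to some $\tilde\varphi_\infty\in PSH(\tilde X,\tilde\chi)\cap C^\infty(\tilde X\setminus\tilde D)$, which satisfies $(\tilde\chi+\ddbar\tilde\varphi_\infty)^n = e^{\tilde\varphi_\infty}|\sigma_E|^{2(n-1)}_{h_E}|\sigma_F|^2_{h_F}\tilde\Omega = e^{\tilde\varphi_\infty}(\tilde\pi)^*\Omega$ on $\tilde X\setminus\tilde D$, together with the lower bound from (\ref{c06}). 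Pushing down by $\tilde\pi$ and invoking the uniqueness statement of Lemma \ref{lm23} (the descended potential lies in $PSH(X,\chi)\cap C^\infty(X\setminus D)$ and has the required log-pole lower bound), we conclude $\tilde\varphi_\infty = \tilde\varphi_{KE}$; since every subsequential limit is the same, the full family converges. The main obstacle I anticipate is making the weighted maximum-principle argument for (\ref{c26}) genuinely uniform in both $k$ and $\epsilon$ simultaneously — one must check that the constants produced by the Schwarz lemma (which involve the curvature of the background and the bisectional curvature bounds absorbed via the auxiliary term) do not degenerate as $\epsilon\to 0$ even though the right hand side volume form degenerates along $E\cup F$; handling the $\epsilon$-regularized factors $(|\sigma_E|^{2(n-1)}_{h_E}+\epsilon)$ and $(|\sigma_F|^2_{h_F}+\epsilon)$ carefully, so that their $\ddbar\log$ contributions are controlled uniformly, is the delicate point.
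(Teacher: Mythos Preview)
Your argument for the upper bound in (\ref{c06}) has a genuine gap. At the maximum $p_{\max}$ of $\tilde\varphi_{k,\epsilon}$ you obtain
\[
e^{\tilde\varphi_{k,\epsilon}(p_{\max})}\,\bigl(|\sigma_E|^{2(n-1)}_{h_E}(p_{\max})+\epsilon\bigr)\bigl(|\sigma_F|^2_{h_F}(p_{\max})+\epsilon\bigr)\,\tilde\Omega(p_{\max})
\;\leq\;(\omega_{\mathrm{ref}})^n(p_{\max})\;\leq\;C\,\tilde\Omega(p_{\max}),
\]
but to conclude a bound on $\tilde\varphi_{k,\epsilon}(p_{\max})$ you need a \emph{lower} bound on the factor $(|\sigma_E|^{2(n-1)}_{h_E}+\epsilon)(|\sigma_F|^2_{h_F}+\epsilon)$ at $p_{\max}$, not the upper bound you invoke. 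There is no a priori control on where $p_{\max}$ lies; if it drifts toward $E\cup F$ as $\epsilon\to 0$, the factor degenerates to order $\epsilon$ (or $\epsilon^2$) and you only get $\tilde\varphi_{k,\epsilon}(p_{\max})\leq C - c\log\epsilon$, which blows up. This is precisely why the paper does \emph{not} use the pointwise maximum principle here: it instead uses the cohomological identity
\[
\int_{\tilde X} e^{\tilde\varphi_{k,\epsilon}}\bigl(|\sigma_E|^{2(n-1)}_{h_E}+\epsilon\bigr)\bigl(|\sigma_F|^2_{h_F}+\epsilon\bigr)\,\tilde\Omega \;=\; \bigl[(1-e^{-k})\tilde\chi + e^{-k}\tilde\pi^*\omega_0 + \epsilon\tilde\omega\bigr]^n \;\leq\; C,
\]
then Jensen's inequality to bound $\int\tilde\varphi_{k,\epsilon}$ over a fixed open set where the density factor is bounded below, and finally the sub-mean-value inequality for quasi-plurisubharmonic functions to pass from an integral bound to a sup bound.

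The rest of your outline is sound and essentially matches the paper. Your direct maximum-principle argument for the lower bound is a slight shortcut compared to the paper, which introduces an auxiliary bounded solution $\psi_{k,\epsilon,\delta}$ of a nondegenerate Monge--Amp\`ere equation and compares; both work here because for the lower bound one only needs the right-hand-side density bounded \emph{above}, which it is. Your treatment of (\ref{c26}) via Tsuji's trick and of the smooth convergence via local higher-order estimates plus the uniqueness Lemma~\ref{lm23} is exactly what the paper intends. The ``main obstacle'' you flag at the end is not actually the issue: the $\ddbar\log$ of the regularized factors causes no trouble in the $C^2$ estimate, since the relevant curvature term coming from the equation has the correct sign; the real delicate point is the one you missed in the $C^0$ upper bound.
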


\begin{proof} The uniform upper bound for $\tilde \varphi_{k, \epsilon}$ follows from the mean value inequality for the plurisubharmonic function combined with the Jensen inequality and the uniform bound for $\int_{\tilde X} e^{\tilde \varphi_{k, \epsilon} } (|\sigma_E|^{2(n-1)} + \epsilon) (|\sigma_D|^2)_{h_D}+\epsilon) \tilde \Omega$. To prove the lower bound for $\tilde \varphi_{k, \epsilon}$, we consider
$$\tilde \varphi_{k, \epsilon, \delta} = \tilde \varphi_{k, \epsilon} - \delta \log |\sigma_{\tilde D}|^2_{h_{\tilde D, \delta}}. $$
Then $\tilde \varphi_{k, \epsilon, \delta}$ satisfies on $\tilde X \setminus \tilde D$
$$ (\tilde\chi-\delta Ric(h_{\tilde D,\delta}) + k^{-1} \omega_0 + \epsilon \tilde\omega + \ddbar \tilde \varphi_{k,\epsilon,\delta} )^n = e^{\tilde\varphi_{k,\epsilon, \delta} }  |\sigma_{\tilde D}|^{2\delta}_{h_{\tilde D}} (|\sigma_E|^{2(n-1)}_{h_E} +\epsilon) (|\sigma_F|^2_{h_F}+\epsilon) \tilde \Omega. $$
Let $\psi_{k, \epsilon, \delta} \in PSH(\tilde X, \tilde \chi - \delta Ric(h_{\tilde D}) )\cap C^0(\tilde X)$ solves
$$ (\tilde\chi-\delta Ric(h_{\tilde D,\delta}) + k^{-1} \omega_0 + \epsilon \tilde\omega + \ddbar \psi_{k, \epsilon,\delta} )^n = e^{\psi_{k,\epsilon, \delta} } (|\sigma_E|^{2(n-1)}_{h_E} +\epsilon) (|\sigma_F|^2_{h_F}+\epsilon) \tilde \Omega.$$
Then there exists $C=C(\delta)$ independent of $k, \epsilon$ such that  
$$\|\psi_{k, \epsilon, \delta}\|_{L^\infty(\tilde X)} \leq C.$$
Then the low bound for $\tilde\varphi_{k,\epsilon, \delta} $ follows immediately from the maximum principle. The estimate (\ref{c26}) can be proved by standard maximum principle using Tsuji's trick. 

\end{proof}

Let $B_O$ be a sufficiently small Euclidean ball on $Z$ centered at $O$  and let $\tilde B_O = \pi_2^{-1}(B_O)$ in $\tilde X$. The support of $\tilde D'$ and $\tilde F = (\pi_2)^{-1} (F) - E$, the proper transformation of $F$,  lie in the subvariety defined by  $w =0$ for a holomorphic function $w$.

Lemma \ref{483} immediately implies the following claim. 

\begin{lemma} \label{co43} Let $\tilde B_O= \pi_2^{-1}(B_O)$. There exist $\lambda, C>0$ such that for all $k\in \mathbb{Z}^+$ and $\epsilon \in (0,1)$, 
\begin{equation}
\tilde \omega_{k,\epsilon} |_{\partial \tilde B_O}  \leq  C \left(  |w|^{2\lambda} ~\tilde \omega \right)|_{\partial \tilde B_O}. 
\end{equation}

\end{lemma}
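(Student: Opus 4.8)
The plan is to establish the estimate on the fixed compact real hypersurface $\partial\tilde B_O$. By construction $\partial\tilde B_O$ lies in $\tilde X\setminus E$, since $E=\pi_2^{-1}(O)$ sits entirely over the interior point $O\in B_O$; hence $\pi_2$ is a biholomorphism near $\partial\tilde B_O$ and $|\sigma_E|^{2(n-1)}_{h_E}$ is bounded above and below there by positive constants. Also $\tilde D=\tilde D'+E$ and on $\partial\tilde B_O$ the support of $\tilde D$ meets only $\{w=0\}$, so on $\{|w|\ge\delta_0\}\cap\partial\tilde B_O$ (any fixed $\delta_0>0$) we have $|\sigma_{\tilde D}|^2_{h_{\tilde D}}\ge c(\delta_0)>0$ and (\ref{c26}) gives at once $\tilde\omega_{k,\epsilon}\le C\tilde\omega\le C(\delta_0)|w|^{2\lambda}\tilde\omega$ for any $\lambda>0$. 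Thus the whole content of the lemma is the bound in the tubular neighbourhood $\{|w|<\delta_0\}\cap\partial\tilde B_O$ of $\{w=0\}$, where (\ref{c26}) alone only lets the metrics blow up.

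There I would first read a degeneracy of the density off the Monge-Ampere equation (\ref{4365}). Since the support of $\tilde F$, where $\sigma_F$ vanishes, lies in $\{w=0\}$ near $\partial\tilde B_O$, the Nullstellensatz gives $|\sigma_F|^2_{h_F}\le C|w|^2$ there, and together with $\tilde\varphi_{k,\epsilon}\le C$ from (\ref{c06}) and the bound on $|\sigma_E|^{2(n-1)}_{h_E}$ this turns (\ref{4365}) into $(\tilde\omega_{k,\epsilon})^n\le C(|w|^2+\epsilon)\tilde\omega^n$ on $\{|w|<\delta_0\}\cap\partial\tilde B_O$. On the other hand, from $|\sigma_{\tilde D}|^2_{h_{\tilde D}}=|\sigma_{\tilde D'}|^2_{h_{\tilde D'}}|\sigma_E|^2_{h_E}\ge c\,|\sigma_{\tilde D'}|^2_{h_{\tilde D'}}\ge c\,|w|^{2N}$ (Nullstellensatz again) and (\ref{c26}) I get the one-sided control $\tilde\omega_{k,\epsilon}\le C|w|^{-2\lambda_0}\tilde\omega$, bounding above the single eigenvalue of $\tilde\omega_{k,\epsilon}$ in the direction conormal to $\{w=0\}$.

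The decisive step is then to promote the density bound and this one-sided bound into the full inequality $\tilde\omega_{k,\epsilon}\le C|w|^{2\lambda}\tilde\omega$ along $\partial\tilde B_O$: once the conormal eigenvalue is controlled above by (\ref{c26}), the remaining $n-1$ eigenvalues are forced down by the density bound, provided they are uniformly bounded below, i.e.\ provided $\tr_{\tilde\omega_{k,\epsilon}}(\tilde\omega)$ is bounded above near $\{w=0\}$. I would get this last ingredient by a localized Schwarz-lemma / maximum-principle argument on $\tilde B_O\setminus\{w=0\}$ of the kind used for Lemma \ref{lm26}, applied to $\log\tr_{\tilde\omega_{k,\epsilon}}(\tilde\omega)$ minus a large multiple of $\tilde\varphi_{k,\epsilon}$ together with a controlled multiple of the auxiliary potentials entering (\ref{4365}): the Ricci term is handled by the preceding lemma, which bounds $Ric(\tilde\omega_{k,\epsilon})$ above by $-(1-e^{-k})\tilde\omega_{k,\epsilon}+A\tilde\omega$, the test quantity tends to $-\infty$ along $\{w=0\}$ for the multiplier large, and its boundary contribution on $\partial\tilde B_O$ is controlled by the estimates already in hand.

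I expect the real difficulty to lie in this last step, for two reasons. First, the $\epsilon$-regularization of the density in (\ref{4365}) makes $Ric(\tilde\omega_{k,\epsilon})$ only conditionally bounded below near $\{w=0\}$, so the maximum principle has to be run on a neighbourhood of $\{w=0\}$ shrinking with $\epsilon$ and the $\epsilon$-term in the density bound must be reabsorbed as in the proof of Lemma \ref{483}. Second, the exponents — $\lambda_0$ from (\ref{c26}), the integer $N$, and the vanishing order of $\sigma_F$ along $\tilde F$ — have to be balanced so that the final $\lambda>0$ comes out explicitly and uniformly in $k$ and $\epsilon$. Granting this, combining the estimate on $\{|w|<\delta_0\}\cap\partial\tilde B_O$ with the trivial one on $\{|w|\ge\delta_0\}\cap\partial\tilde B_O$ gives the lemma, with $\lambda$ the smaller of the two exponents; the smooth convergence $\tilde\omega_{k,\epsilon}\to\tilde\pi^*\omega_{KE}$ on $\tilde X\setminus\tilde D$ from Lemma \ref{483} moreover identifies the limiting form on the part of $\partial\tilde B_O$ bounded away from $\{w=0\}$.
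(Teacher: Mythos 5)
The statement you are proving is not the one the paper needs, and as printed it is false: for each fixed $k$ and $\epsilon$ equation (\ref{4365}) is a nondegenerate Monge--Amp\`ere equation, so by Yau's theorem $\tilde\omega_{k,\epsilon}$ is a smooth K\"ahler form on all of $\tilde X$ and therefore satisfies $\tilde\omega_{k,\epsilon}\geq c_{k,\epsilon}\,\tilde\omega$ on the compact set $\partial\tilde B_O$; no such form can obey $\tilde\omega_{k,\epsilon}\leq C|w|^{2\lambda}\tilde\omega$ with $\lambda>0$ near $\{w=0\}\cap\partial\tilde B_O$. The exponent in the display must be $-2\lambda$. This is forced by the context: the paper derives the lemma ``immediately'' from Lemma \ref{483}, whose estimate (\ref{c26}) reads $\tilde\omega_{k,\epsilon}\leq C\,|\sigma_{\tilde D}|^{-2\lambda}_{h_{\tilde D}}\,\tilde\omega$, and the lemma's only role is to supply the boundary control on $\partial\tilde B_O$ for the maximum-principle proof of Lemma \ref{digest}, whose target bound is $C\,|\sigma_E|^{-2(1-\nu)}_{h_E}|w|^{-2\lambda}\tilde\omega$ (note the $|w|^{-2\lambda}$ there, and that $|\sigma_E|_{h_E}$ is bounded below on $\partial\tilde B_O$ since $E=\pi_2^{-1}(O)$ lies in the interior of $\tilde B_O$). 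With the corrected sign, the proof is exactly the ``one-sided control'' you obtain in passing in your second paragraph: on $\partial\tilde B_O$ the zero set of $\sigma_{\tilde D}$ is contained in $\{w=0\}$, so $|\sigma_{\tilde D}|^2_{h_{\tilde D}}\geq c|w|^{2N}$ by the Nullstellensatz/\L ojasiewicz inequality, and (\ref{c26}) then gives $\tilde\omega_{k,\epsilon}\leq C|w|^{-2N\lambda}\tilde\omega$ there. Note also that (\ref{c26}) is an inequality of $(1,1)$-forms and bounds \emph{all} eigenvalues of $\tilde\omega_{k,\epsilon}$ relative to $\tilde\omega$, not merely ``the single eigenvalue conormal to $\{w=0\}$'' as you assert.

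The rest of your argument, which tries to force the tangential eigenvalues down to $O(|w|^{2\lambda})$, cannot be repaired, and the obstruction is visible in your own computation: the right-hand side of (\ref{4365}) is bounded below by $e^{\tilde\varphi_{k,\epsilon}}\epsilon^2\tilde\Omega$, and by (\ref{c06}) one has $\tilde\varphi_{k,\epsilon}\geq -C_\delta$ wherever $|\sigma_{\tilde D}|_{h_{\tilde D}}\leq 1$, so for $\epsilon$ of order one the Monge--Amp\`ere measure of $\tilde\omega_{k,\epsilon}$ is bounded below by a positive constant along $\{w=0\}\cap\partial\tilde B_O$. Your density bound $(\tilde\omega_{k,\epsilon})^n\leq C(|w|^2+\epsilon)\tilde\omega^n$ therefore carries no information that survives uniformly in $\epsilon\in(0,1)$: the ``$\epsilon$-term to be reabsorbed'' is in fact the whole term, and no trace or Schwarz-lemma estimate can push eigenvalues below that floor. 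Keep the first half of your second paragraph, discard the third and fourth, and state the lemma with $|w|^{-2\lambda}$.
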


The following lemma is purely local and it corresponds to Lemma 4.11 in \cite{S3}. 

\begin{lemma}

Let $\hat\omega$ be the smooth closed nonnegative closed $(1,1)$-form as the pullback of the Euclidean metric $\sqrt{-1} \sum_{j=1}^n dz_j \wedge d\bar z_{j}$ on $B_O$. $\hat\omega$ is K\"ahler on $\tilde B_O \setminus E$. There exist $C>0$, sufficiently small $\epsilon_0>0$ and a smooth hermitian metric $h_E$ on $E$ such that on $\tilde B_O$, 
\begin{equation}\label{locba}
C^{-1} \hat\omega \leq \tilde\omega \leq C |\sigma_E|^2_{h_E}  \hat\omega, 
\end{equation}
\begin{equation}\label{locbaa} %
\tilde \chi - \epsilon_0 Ric(h_E) > C^{-1}\tilde \omega. 
\end{equation}

\end{lemma}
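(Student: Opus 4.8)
The plan is to reduce everything to an explicit local computation on $\tilde B_O$, since both claimed inequalities are purely local statements about the geometry near the exceptional divisor $E$ of the single blow-up $\pi_2$. First I would set up coordinates: on the base $B_O \subset Z$, choose Euclidean coordinates $(z_1,\dots,z_n)$ centered at $O$ so that $\hat\omega$ is the pullback of $\sqrt{-1}\sum dz_j\wedge d\bar z_j$. On the blow-up $\tilde B_O = \pi_2^{-1}(B_O)$, work in one of the standard affine charts, say with coordinates $(u_1, w, u_2,\dots, u_n)$ where $z_1 = w$, $z_j = w u_j$ for $j\geq 2$ (so $E = \{w=0\}$ and $w$ is, up to units, the defining section $\sigma_E$ in this chart). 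In these coordinates $\pi_2^*(\sqrt{-1}\sum dz_j\wedge d\bar z_j)$ vanishes precisely along $E$ to the expected order, and a direct computation shows $\hat\omega$ is degenerate exactly in the fiber directions of $E$, while $|\sigma_E|^2_{h_E}\,\hat\omega$ dominates a genuine smooth Kähler form on $\tilde B_O$ once $h_E$ is chosen appropriately (this is the standard fact that $\pi_2^*\hat\omega + \delta\,\omega_E$ is Kähler for the tautological class).

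For the left inequality $C^{-1}\hat\omega \leq \tilde\omega$ in \eqref{locba}: $\tilde\omega$ is a fixed smooth Kähler metric on all of $\tilde X$, hence bounded below by a positive multiple of any fixed smooth semipositive form on the compact piece $\tilde B_O$; since $\hat\omega = \pi_2^*(\text{Euclidean form})$ is smooth and semipositive, $C^{-1}\hat\omega \leq \tilde\omega$ follows immediately from compactness. For the right inequality $\tilde\omega \leq C|\sigma_E|^2_{h_E}\,\hat\omega$: the point is that $\tilde\omega$, being smooth and positive, is comparable on $\tilde B_O$ to the smooth Kähler form $\pi_2^*\hat\omega + \omega_E$ (or any fixed Kähler form), so it suffices to show $\pi_2^*\hat\omega + \omega_E \leq C|\sigma_E|^2_{h_E}\,\hat\omega$ off $E$. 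In the chart above one computes $\hat\omega \sim |w|^2(\text{fiber directions}) + (\text{base directions})$ after pullback, so multiplying by $|\sigma_E|^2_{h_E}\sim |w|^2$ exactly compensates the degeneracy of $\hat\omega$ in the fiber directions of the $\mathbb{CP}^{n-1}$ and makes the product dominate a smooth Kähler form; choosing $h_E$ with the right curvature makes this precise. This is the same computation that underlies Lemma 4.11 in \cite{S3}.

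For \eqref{locbaa}: we have $\tilde\chi = \tilde\pi^*\chi$ where $\chi \in -c_1(X)$ is a fixed smooth form. Since $\tilde\pi^* K_X$ is big and nef and its support data was arranged so that $\tilde\pi^* K_X - \epsilon_0\tilde D$ is ample with $\tilde D \supset E$, there is a smooth hermitian metric $h_{\tilde D,\epsilon_0}$ with $\tilde\chi - \epsilon_0 Ric(h_{\tilde D,\epsilon_0}) > 0$ on all of $\tilde X$; restricting the $E$-component of this metric gives $h_E$ with $\tilde\chi - \epsilon_0 Ric(h_E)$ bounded below by a positive multiple of a smooth Kähler form on $\tilde B_O$, hence by $C^{-1}\tilde\omega$ after shrinking. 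One must check the two choices of $h_E$ (the one in \eqref{locba} and the one in \eqref{locbaa}) can be taken to be the same, which is fine since both only require the curvature of $h_E$ to be sufficiently positive transverse to $E$, and one can add a large positive multiple of a smooth form supported near $E$.

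I expect the main obstacle to be purely bookkeeping in the blow-up chart: tracking exactly how $\pi_2^*(\sqrt{-1}\sum dz_j\wedge d\bar z_j)$ degenerates along $E$ and verifying that multiplication by $|\sigma_E|^2_{h_E}$ with a suitable choice of fiber metric turns it into something dominating a smooth Kähler form on the full exceptional $\mathbb{CP}^{n-1}$ (not just generically). This is standard but requires care to get the orders of vanishing right and to make the estimate uniform up to $E$; everything else is soft compactness and the already-established Kodaira-lemma positivity.
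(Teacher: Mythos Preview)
The paper gives no proof of this lemma; it only remarks that it is ``purely local and corresponds to Lemma 4.11 in \cite{S3}''. Your strategy of computing directly in a standard blow-up chart is exactly the right one and is presumably what \cite{S3} does, so in that sense there is nothing to compare.

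There is, however, a genuine error in your argument for the right-hand inequality in \eqref{locba}. You correctly compute that in the chart $z_1=w$, $z_j=wu_j$ the pullback $\hat\omega$ behaves like $(1+\sum|u_j|^2)\,|dw|^2 + |w|^2\sum|du_j|^2$ plus cross terms, so that $\hat\omega$ degenerates to order $|w|^2$ in the fiber directions $du_j$. You then claim that ``multiplying by $|\sigma_E|^2_{h_E}\sim|w|^2$ exactly compensates the degeneracy of $\hat\omega$ in the fiber directions''. This is backwards: multiplying by $|w|^2$ produces a form that behaves like $|w|^2|dw|^2 + |w|^4\sum|du_j|^2$, which \emph{vanishes} along $E$ and therefore cannot possibly dominate the positive-definite K\"ahler form $\tilde\omega$. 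To compensate the degeneracy you must \emph{divide} by $|w|^2$, and the inequality that your own coordinate computation actually yields is
\[
\tilde\omega \;\leq\; C\,|\sigma_E|^{-2}_{h_E}\,\hat\omega .
\]
Indeed, the inequality as printed in the statement is impossible on its face: the right-hand side is a smooth form vanishing identically along $E$, while the left-hand side is strictly positive there. This is almost certainly a typo for $|\sigma_E|^{-2}$ (and this is the form in which the estimate is used downstream in the proof of Lemma \ref{digest}); your coordinate description should have flagged the inconsistency rather than been bent to fit it.

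The rest of your outline is fine: the left inequality $C^{-1}\hat\omega\le\tilde\omega$ is immediate from compactness since $\hat\omega$ is smooth and semipositive, and your argument for \eqref{locbaa} via the negativity of $[E]|_E\cong\mathcal{O}(-1)$ on the exceptional $\mathbb{CP}^{n-1}$ is the standard one.
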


The following is the main estimate in this section and the proof follows from the proof of Proposition 4.7 in \cite{S3}. The only difference is that in our situation $\tilde\varphi_{k, \epsilon}$ is not uniformly bounded in $L^\infty$, however, the estimate (\ref{c06}) suffices to achieve the same estimate  (\ref{c261}) by increasing $\lambda$. 

\begin{lemma} \label{digest} There exist $\nu, \lambda>0$ and $C>0$ such that for any $\epsilon\in (0, 1)$ and $k\in \mathbb{Z}^+$,    we have on $\tilde B_O$, 
\begin{equation} \label{c261}
\tilde \omega_{k, \epsilon} \leq \frac{C }{|\sigma_E|^{2(1-\nu)}_{h_E}    |w|^{2\lambda}} \tilde\omega,
\end{equation}
where $\omega_k$ is the almost K\"ahler-Einstein metric defined in (\ref{maeq}).

\end{lemma}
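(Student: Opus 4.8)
The plan is to run a maximum principle argument on $\tilde B_O$ for a suitable weighted combination of the trace of $\tilde\omega_{k,\epsilon}$ with respect to $\tilde\omega$ together with the local potential, the weight $\log|\sigma_E|^2_{h_E}$ and the local function $\log|w|^2$, exactly as in the proof of Proposition 4.7 in \cite{S3}. First I would record the evolution/elliptic inequality for $\log tr_{\tilde\omega_{k,\epsilon}}(\hat\omega)$ obtained from the parabolic Schwarz lemma of \cite{ST1, ST2}: since $\hat\omega$ has bounded bisectional curvature on $\tilde B_O\setminus E$ (it is pulled back from the flat Euclidean ball) and $Ric(\tilde\omega_{k,\epsilon})\geq -(1-e^{-k})\tilde\omega_{k,\epsilon}+\dots$ is controlled by the previous lemma, one gets a differential inequality of the form $\Delta_{\tilde\omega_{k,\epsilon}}\log tr_{\tilde\omega_{k,\epsilon}}(\hat\omega)\geq -C\, tr_{\tilde\omega_{k,\epsilon}}(\tilde\omega) - C$ away from $E$. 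Combining (\ref{locba}) and (\ref{locbaa}), the curvature term can be absorbed into $\epsilon_0^{-1}(\tilde\chi-\epsilon_0 Ric(h_E))$-positivity.

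Next I would set up the test function
$$
G \;=\; \log tr_{\tilde\omega_{k,\epsilon}}(\hat\omega) \;-\; A\big(\tilde\varphi_{k,\epsilon} - (1-\nu)\log|\sigma_E|^2_{h_E}\big) \;+\; \lambda \log|w|^2
$$
for constants $A$ large and $\nu,\lambda$ small to be chosen. The point of the $-A\tilde\varphi_{k,\epsilon}$ term is the standard one: $\ddbar(-A\tilde\varphi_{k,\epsilon})$ produces $-A\tilde\omega_{k,\epsilon}$ plus bounded terms, so $-A\,\Delta_{\tilde\omega_{k,\epsilon}}\tilde\varphi_{k,\epsilon}$ contributes $+A\,tr_{\tilde\omega_{k,\epsilon}}(\hat\omega)$ up to constants, dominating the bad $-C\,tr_{\tilde\omega_{k,\epsilon}}(\hat\omega)$ from the Schwarz term once $A$ is large (here one uses (\ref{locba}) to compare $\tilde\omega$, $\hat\omega$, $|\sigma_E|^2_{h_E}\hat\omega$). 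The crucial difference from \cite{S3} — and the place where care is needed — is that $\tilde\varphi_{k,\epsilon}$ is \emph{not} uniformly bounded below; but (\ref{c06}) gives $\tilde\varphi_{k,\epsilon}\geq -\delta\log|\sigma_{\tilde D}|^2_{h_{\tilde D}}-C_\delta$, so $G$ still tends to $-\infty$ along $E\cup\{w=0\}$ provided $\nu$ is chosen small relative to the multiplicity of $E$ in $\tilde D$ and $\lambda$ is increased to dominate the $\delta$-loss coming from the support of $\tilde F$ and $\tilde D'$ inside $\{w=0\}$. This is exactly the remark preceding the statement: increasing $\lambda$ compensates for the failure of the $L^\infty$ bound.

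With $G\to-\infty$ on the boundary strata of $\tilde B_O$, the maximum of $G$ over $\tilde B_O\setminus(E\cup\{w=0\})$ is attained at an interior point or on $\partial\tilde B_O$. On $\partial\tilde B_O$, Lemma \ref{co43} bounds $tr_{\tilde\omega_{k,\epsilon}}(\hat\omega)$ by $C|w|^{-2\lambda}$ (after comparing $\hat\omega$ and $\tilde\omega$ via (\ref{locba})), while the remaining terms in $G$ are bounded there since $|\sigma_E|_{h_E}$ and $|w|$ are bounded away from $0$ on $\partial\tilde B_O$; so $G\leq C$ on $\partial\tilde B_O$. At an interior maximum, the differential inequality for $G$ forces $tr_{\tilde\omega_{k,\epsilon}}(\hat\omega)\leq C$ at that point, hence $G\leq C$ there as well. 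Therefore $G\leq C$ on all of $\tilde B_O$, which rearranges to
$$
tr_{\tilde\omega_{k,\epsilon}}(\hat\omega) \;\leq\; \frac{C\, e^{A\tilde\varphi_{k,\epsilon}}}{|\sigma_E|^{2A(1-\nu)}_{h_E}\,|w|^{2\lambda}} \;\leq\; \frac{C}{|\sigma_E|^{2(1-\nu)}_{h_E}\,|w|^{2\lambda}},
$$
using the upper bound $\tilde\varphi_{k,\epsilon}\leq C$ from (\ref{c06}) and relabeling $\nu,\lambda$; combined once more with (\ref{locba}) to pass from $\hat\omega$ back to $\tilde\omega$ this is precisely (\ref{c261}). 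The main obstacle is the bookkeeping in the second paragraph: one must check that the single small parameter $\nu$ can simultaneously be taken small enough that the $+A\,tr_{\tilde\omega_{k,\epsilon}}(\hat\omega)$ term still wins in the interior estimate (this needs $\nu<1$ with room to spare, coming from (\ref{locbaa})) while $\lambda$ — which is free to be large — soaks up both the $\delta\log|\sigma_{\tilde D}|^2$ loss from (\ref{c06}) along $\{w=0\}$ and any discrepancy between the various hermitian metrics; verifying that these choices are compatible uniformly in $k$ and $\epsilon$ is the only genuinely delicate point, and it is handled exactly as the displayed computation in the proof of Proposition 4.7 of \cite{S3}, with $\lambda$ enlarged.
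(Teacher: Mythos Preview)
Your overall strategy --- a weighted maximum-principle argument on $\tilde B_O$ with boundary control from Lemma~\ref{co43}, barrier terms in $\log|\sigma_E|^2_{h_E}$ and $\log|w|^2$, and enlarging $\lambda$ to absorb the failure of the $L^\infty$ bound via (\ref{c06}) --- matches the paper's description and the scheme of Proposition~4.7 in \cite{S3}. The problem is that you run the maximum principle on the wrong trace.

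Throughout you work with $tr_{\tilde\omega_{k,\epsilon}}(\hat\omega)$, the trace of the reference form with respect to the solution metric (the Chern--Lu/Schwarz direction). An upper bound on this quantity only says $\hat\omega\leq M\,\tilde\omega_{k,\epsilon}$, i.e.\ a \emph{lower} bound on $\tilde\omega_{k,\epsilon}$; your final ``rearranges to~(\ref{c261})'' step is a non sequitur, and your invocation of Lemma~\ref{co43} on $\partial\tilde B_O$ is inverted in the same way (that lemma bounds $\tilde\omega_{k,\epsilon}$ from above, hence controls $tr_{\hat\omega}(\tilde\omega_{k,\epsilon})$ there, not $tr_{\tilde\omega_{k,\epsilon}}(\hat\omega)$). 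One might try to pass to the desired trace via the elementary inequality $tr_{\hat\omega}(\tilde\omega_{k,\epsilon})\leq n\,(\tilde\omega_{k,\epsilon}^n/\hat\omega^n)\,(tr_{\tilde\omega_{k,\epsilon}}(\hat\omega))^{n-1}$, but $\tilde\omega_{k,\epsilon}^n/\hat\omega^n$ carries the factor $(|\sigma_E|^{2(n-1)}_{h_E}+\epsilon)/|\sigma_E|^{2(n-1)}_{h_E}$, which blows up along $E$ for each fixed $\epsilon>0$ and destroys the required uniformity.

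The correct quantity is $\log tr_{\hat\omega}(\tilde\omega_{k,\epsilon})$, handled by the Aubin--Yau second-order inequality rather than the Schwarz lemma. The flatness of $\hat\omega$ on $\tilde B_O\setminus E$ kills the bisectional-curvature term; the Ricci contribution is controlled by the \emph{upper} bound $Ric(\tilde\omega_{k,\epsilon})\leq -(1-e^{-k})\tilde\omega_{k,\epsilon}+A\tilde\omega$ recorded in the lemma preceding Lemma~\ref{483} (note that no uniform Ricci lower bound is available for the $\epsilon$-perturbed equation on $\tilde X$, so the Schwarz direction is not the natural one here). With this corrected trace, every other component of your sketch --- the test function $G$, the role of $-A\tilde\varphi_{k,\epsilon}$, the choice of $\nu$ small and $\lambda$ large, the interior/boundary dichotomy, and the absorption of (\ref{c06}) into $\lambda$ --- goes through as written and coincides with the argument of Proposition~4.7 in \cite{S3}.
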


Lemma \ref{digest} immediately implies the following corollary by letting $\epsilon$ and then $k^{-1} \rightarrow 0$. 

\begin{corollary} \label{co44} There exist $\nu, \lambda>0$  and $C >0$ such that  on $\tilde B_O$, %
\begin{equation}
(\tilde\pi)^* \omega_{KE}  \leq \frac{C  }{|\sigma_E|^{2(1-\nu)}_{h_E}     |w|^{2\lambda}} \tilde\omega 
\end{equation}
and 
for any  $k\in \mathbb{Z}^+$, 
\begin{equation}
(\tilde\pi)^* \omega_k  \leq \frac{C  }{|\sigma_E|^{2(1-\nu)}_{h_E}     |w|^{2\lambda}} \tilde\omega.
\end{equation}

\end{corollary}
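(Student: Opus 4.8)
The plan is to deduce the corollary directly from Lemma \ref{digest} by a two-step limiting argument, using the convergence statements already established in Lemma \ref{483} and in Section 3. Fix the coordinate ball $\tilde B_O$ and the data $\sigma_E, h_E, w, \nu, \lambda, C$ furnished by Lemma \ref{digest}, so that for every $k\in\mathbb{Z}^+$ and every $\epsilon\in(0,1)$ we have, as $(1,1)$-forms on $\tilde B_O$,
\begin{equation*}
\tilde\omega_{k,\epsilon} \leq \frac{C}{|\sigma_E|^{2(1-\nu)}_{h_E}\,|w|^{2\lambda}}\,\tilde\omega.
\end{equation*}
First I would send $\epsilon\to 0$ with $k$ fixed. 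By the last assertion of Lemma \ref{483}, $\tilde\varphi_{k,\epsilon}$ converges smoothly on $\tilde X\setminus\tilde D$ to the solution $\tilde\varphi_k$ of the $\epsilon=0$ Monge-Ampère equation (\ref{4365}) — which, after pushing down by $\tilde\pi$, is exactly equation (\ref{maeq}) for $\omega_k$, so $\tilde\omega_{k,0} = \tilde\pi^*\omega_k$ — and hence $\tilde\omega_{k,\epsilon}\to\tilde\pi^*\omega_k$ smoothly on the complement of the exceptional/degeneration locus. Since $\tilde B_O\setminus(E\cup\{w=0\})$ is contained in that complement, the pointwise inequality passes to the limit there, and both sides are continuous across $E\cup\{w=0\}$ in the sense of the stated bound (the right-hand side blows up there, so the inequality is vacuous on that set). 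This gives the second displayed inequality of the corollary:
\begin{equation*}
(\tilde\pi)^*\omega_k \leq \frac{C}{|\sigma_E|^{2(1-\nu)}_{h_E}\,|w|^{2\lambda}}\,\tilde\omega \qquad\text{on }\tilde B_O.
\end{equation*}

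Next I would send $k\to\infty$. From Lemma \ref{lm31} (equivalently the discussion following it), $\varphi_k\to\varphi_\infty=\varphi_{KE}$ in $C^\infty_{loc}(X\setminus D)$, hence $\omega_k\to\omega_{KE}$ smoothly away from $D$; pulling back by $\tilde\pi$, $\tilde\pi^*\omega_k\to\tilde\pi^*\omega_{KE}$ smoothly on $\tilde X\setminus\tilde\pi^{-1}(D)$, and in particular on $\tilde B_O$ away from the locus where the weight degenerates. Passing to the limit in the inequality just obtained yields
\begin{equation*}
(\tilde\pi)^*\omega_{KE} \leq \frac{C}{|\sigma_E|^{2(1-\nu)}_{h_E}\,|w|^{2\lambda}}\,\tilde\omega \qquad\text{on }\tilde B_O,
\end{equation*}
which is the first displayed inequality. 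This completes the proof.

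The only genuine issue is making sure the limits in Lemma \ref{483} and in Lemma \ref{lm31} are uniform enough on $\tilde B_O\setminus(E\cup\{w=0\})$ to pass the inequality to the limit — i.e. that the convergence is in $C^0_{loc}$ of tensors on that open set, not merely weak convergence of potentials. This is already contained in the cited convergence statements (smooth convergence on compact subsets of the relevant complement), so once one checks that $\tilde B_O\setminus(E\cup\{w=0\})$ is exhausted by such compact subsets, the argument is routine. I expect no further obstacle; the entire content of the corollary is packaged inside Lemma \ref{digest}, and this step is purely a matter of taking limits.
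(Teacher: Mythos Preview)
Your proposal is correct and follows exactly the route the paper takes: the paper simply says that Lemma \ref{digest} implies the corollary ``by letting $\epsilon$ and then $k^{-1}\rightarrow 0$,'' and you have spelled out precisely this two-step limit, correctly noting that the smooth convergence on compacta away from $E\cup\{w=0\}$ (supplied by the uniform estimates in Lemma \ref{483} and the discussion after Lemma \ref{lm31}) is what allows the pointwise inequality to pass to the limit. The only minor remark is that Lemma \ref{483} literally states the joint limit $\tilde\varphi_{k,\epsilon}\to\tilde\varphi_{KE}$ rather than the intermediate convergence $\tilde\varphi_{k,\epsilon}\to\tilde\pi^*\varphi_k$ as $\epsilon\to 0$, but the latter follows from the same uniform $C^0$ and second-order bounds there, so this is not a gap.
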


\begin{proposition} \label{arc} For any $q\in X$, there exists a smooth path $\gamma(t)$ for $t\in [0, 1]$ such that
\medskip
\begin{enumerate}

\item $\gamma(t) \in X\setminus ( D\cup \{q \})$ for $t\in [0, 1)$ and $\gamma(1)=q$,

\medskip

\item $\gamma$ is transversal to $D$, 

\medskip

\item for any $\varepsilon >0$, there exists $\delta\in (0, 1)$ such that for all $k>0$ and $t\in [1-\delta, 1]$, 
$$d_{g_k }(\gamma(t), q) \leq \varepsilon. $$

\end{enumerate}

\end{proposition}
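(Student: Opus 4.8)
The plan is to build the path $\gamma$ by prescribing its behavior near the endpoint $q$ using the local coordinates coming from the resolution $\tilde\pi:\tilde X\to X$, and then extending it arbitrarily (but transversally to $D$) on the rest of $[0,1]$. Recall $O$ is a point in the smooth part of $\pi_1^{-1}(D)$ with $\tilde\pi(O)=q$, and $E\cong\mathbb{CP}^{n-1}$ is the exceptional divisor of the blow-up $\pi_2$ at $O$. First I would lift the problem to $\tilde X$: since $\tilde\pi$ is an isomorphism away from the exceptional locus, a path in $X$ approaching $q$ transversally to $D$ lifts to a path in $\tilde X$ approaching a point $\tilde q\in E$ transversally to the exceptional divisors, and distances with respect to $g_k$ pull back to distances with respect to $(\tilde\pi)^*g_k=\tilde\omega_k$. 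So it suffices to produce, inside $\tilde B_O=\pi_2^{-1}(B_O)$, a short path (in all the metrics $\tilde\omega_k$ simultaneously) from a fixed point on $\partial\tilde B_O$ to a chosen point $\tilde q\in E$ avoiding $\tilde D'\cup\tilde F$.

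The key input is Corollary \ref{co44}: on $\tilde B_O$ one has the uniform bound
\begin{equation}\label{ppbd}
(\tilde\pi)^*\omega_k \leq \frac{C}{|\sigma_E|^{2(1-\nu)}_{h_E}\,|w|^{2\lambda}}\,\tilde\omega
\end{equation}
with $\nu>0$, where $w$ is the local holomorphic function cutting out $\tilde D'\cup\tilde F$ and $E=\{\sigma_E=0\}$. The point of the exponent $2(1-\nu)$ rather than $2$ is exactly that it makes $|\sigma_E|_{h_E}^{-2(1-\nu)}$ integrable in the transverse direction to $E$: parametrizing a normal slice to $E$ by a coordinate $s$ with $|\sigma_E|_{h_E}^2\sim|s|^2$, the length element along a path hitting $E$ transversally is controlled by $|s|^{-(1-\nu)}|w|^{-\lambda}\,|ds|$, and $\int_0^{s_0}|s|^{-(1-\nu)}\,ds<\infty$. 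So the recipe is: choose $\tilde q\in E$ generic — in particular lying off the proper transforms $\tilde D'$, $\tilde F$ and off all pairwise intersections of exceptional components, so that near $\tilde q$ the function $w$ is bounded away from $0$ — and take $\gamma$ near $\tilde q$ to be (the $\tilde\pi$-image of) a real-analytic arc meeting $E$ transversally at $\tilde q$, contained in the region where $|w|\geq c>0$. Along such an arc, \eqref{ppbd} gives $\mathrm{length}_{g_k}(\gamma|_{[1-\delta,1]})\leq C'\int_0^{\delta'}|s|^{-(1-\nu)}\,ds\to 0$ as $\delta\to 0$, uniformly in $k$. On the remaining part $[0,1-\delta_0]$, the path runs through a compact subset of $X\setminus D$ on which, by Lemma \ref{lm31}(3) and the smooth convergence of $\omega_k$, the metrics $g_k$ are uniformly bounded; here we just need $\gamma$ to connect to the fixed base point $p\in\mathcal{R}_X$ while staying off $D$ except for finitely many transversal crossings, which is possible since $D$ has real codimension $2$ and $X\setminus D$ is connected.

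Assembling: pick $\gamma(1)=q$, let $\gamma$ on $[1-\delta_0,1]$ be the transversal arc just described (pushed down from $\tilde X$), let $\gamma$ on $[0,1-\delta_0]$ be any smooth path in $X\setminus D$ from $p$ to the endpoint of that arc, perturbed to be transversal to $D$ and to avoid $q$; smoothness at the joining time is arranged by a standard reparametrization. Property (1) and (2) are then immediate from the construction, and property (3) follows from the uniform length estimate: for $t\in[1-\delta,1]$, $d_{g_k}(\gamma(t),q)\leq\mathrm{length}_{g_k}(\gamma|_{[t,1]})\leq C'\int_0^{\delta''}|s|^{-(1-\nu)}\,ds<\varepsilon$ once $\delta$ is small, with the bound independent of $k$.

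The main obstacle is verifying that the generic choice of $\tilde q\in E$ really does place the approaching arc in a region where the right-hand side of \eqref{ppbd} is integrable along the path — i.e., that one can simultaneously keep $|w|$ bounded below (staying away from $\tilde D'\cup\tilde F$ and from the non-transverse part of the exceptional locus) and cross only the single divisor $E$ transversally, so that the blow-up of the bound is purely of the harmless type $|s|^{-(1-\nu)}$. This is where the structure $\tilde D=\tilde D'+E$ with $\tilde D'\not\supset E$, together with the fact that $E$ is the exceptional divisor of a point blow-up (hence smooth and meeting the other components nicely after the log resolution), is used; one should check that the set of "bad" points on $E$ where this fails is a proper closed subvariety, so a generic $\tilde q$ works. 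The rest is routine: pulling back distances under $\tilde\pi$, the one-dimensional integrability computation, and patching the two pieces of $\gamma$ smoothly.
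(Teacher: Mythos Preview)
Your proposal is correct and follows essentially the same route as the paper. The paper's proof is a two-line sketch: reduce to $q\in D$, pick a short line segment $\gamma$ in $\tilde B_O$ with $\gamma(t)\in\tilde B_O\setminus(\{w=0\}\cup E)$ for $t<1$ and $\gamma(1)\in E\setminus\{w=0\}$, and invoke estimate \eqref{c261} (equivalently Corollary~\ref{co44}) so that the $g_k$-length of the tail is controlled by $\int_0^{\delta'}|s|^{-(1-\nu)}\,ds$, which tends to $0$ uniformly in $k$. Your write-up unpacks exactly this mechanism---the choice of a generic $\tilde q\in E$ away from $\{w=0\}$, the transversal arc, and the integrability of $|s|^{-(1-\nu)}$---and your ``main obstacle'' paragraph correctly identifies the one thing to check.

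Two small remarks. First, the proposition does not ask for $\gamma(0)=p$; the entire path can be the short arc inside $\tilde B_O$, so the extension on $[0,1-\delta_0]$ and the appeal to Lemma~\ref{lm31}(3) are unnecessary. Second, your phrase ``staying off $D$ except for finitely many transversal crossings'' conflicts with condition~(1), which requires $\gamma(t)\notin D$ for all $t<1$; since $D$ has real codimension~$2$ and $X\setminus D$ is connected, you can (and must) take the path entirely in $X\setminus D$ for $t<1$, with the only contact with $D$ occurring at $t=1$ when $q\in D$.
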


\begin{proof} It suffices to prove the case when $q\in D$ and then the proposition is proved by picking a sufficiently small line segment $\gamma(t)$ for $t\in [0, 1]$ with $\gamma(t) \in \tilde B_O \setminus (\{w=0\} \cup E )$ for $t\in [0, 1)$ and $\gamma(1) \in (E \cap \tilde B_O ) \setminus  \{w=0\}  $. The proposition is then proved by applying estimate (\ref{c261}) as  arc length of $\gamma$ with respect to $\omega_{k, }$ is uniformly bounded for all $k$.

\end{proof}

\begin{corollary} Fix a base point $p\in \mathcal{R}_X$. Then for any point $q\in X$, there exists $A_q>0$ such that 
$$ d_{g_k}(p, q) \leq A_q. $$
Furthermore, $q$ must converge in Gromov-Hausdorff sense to a point $q_\infty\in X_\infty$.

\end{corollary}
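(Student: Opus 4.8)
The plan is to deduce the corollary directly from Proposition~\ref{arc} together with the triangle inequality. First I would fix a base point $p\in\mathcal R_X$ as in Lemma~\ref{locesk}; by construction $B_{g_k}(p,r_0)$ has uniformly positive volume and $p$ converges to $p_\infty\in X_\infty$. For the given $q\in X$, apply Proposition~\ref{arc} with a fixed choice of $\varepsilon$ (say $\varepsilon=1$): this produces a smooth path $\gamma$ with $\gamma(0)\in X\setminus(D\cup\{q\})$, $\gamma(1)=q$, and $\delta\in(0,1)$ such that $d_{g_k}(\gamma(t),q)\le 1$ for all $k$ and all $t\in[1-\delta,1]$. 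In particular $d_{g_k}(\gamma(1-\delta),q)\le 1$ uniformly in $k$.

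Next I would bound $d_{g_k}(p,\gamma(1-\delta))$ uniformly. The point $\gamma(1-\delta)$ lies in the compact set $\gamma([0,1-\delta])\subset\subset X\setminus D$, on which, by Lemma~\ref{lm31}(4) and the uniform convergence $\varphi_k\to\varphi_\infty$ on compact subsets of $X\setminus D$, the metrics $g_k$ converge smoothly to $g_\infty$; hence $\gamma([0,1-\delta])$ has $g_k$-diameter bounded by a constant $C_1$ independent of $k$, and $d_{g_k}(p',\gamma(1-\delta))\le C_1$ for any fixed $p'\in\gamma([0,1-\delta])$. It remains to connect $p$ to this set: since both $p$ and a chosen $p'$ lie in the fixed compact set $\gamma([0,1-\delta])\cup\{p\}\subset\subset X\setminus D$ (choosing $\gamma(0)$ and $p$ in the same component, or linking them by a further fixed path in $X\setminus D$, which is connected away from the analytic set $D$), the same smooth-convergence argument gives $d_{g_k}(p,p')\le C_2$ uniformly. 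Combining, $d_{g_k}(p,q)\le d_{g_k}(p,p')+d_{g_k}(p',\gamma(1-\delta))+d_{g_k}(\gamma(1-\delta),q)\le C_2+C_1+1=:A_q$.

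For the last assertion, since $(X,p,g_k)\to(X_\infty,p_\infty,d_\infty)$ in pointed Gromov--Hausdorff topology and $q$ stays within the $g_k$-ball $B_{g_k}(p,A_q)$ for all $k$, a standard diagonal/compactness argument in the Gromov--Hausdorff limit (using that bounded sets converge and that $X_\infty$ is a length space) extracts a subsequence along which $q$ converges to some $q_\infty\in\overline{B_{d_\infty}(p_\infty,A_q)}\subset X_\infty$; the full sequence converges once we note the limit is independent of the subsequence, which follows from the local isometry between $\mathcal R_X$ and $X_\infty\setminus\mathcal S_X$ established in Lemma~\ref{lmlochom} when $q\notin D$, and for $q\in D$ from the uniform smallness in Proposition~\ref{arc}(3) applied with $\varepsilon\to0$.

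\emph{Main obstacle.} The genuinely delicate input is Proposition~\ref{arc}, and the subtlety it encapsulates---controlling $d_{g_k}(p,q)$ when $q\in D$---rests on the local volume/metric estimate (\ref{c261}) of Lemma~\ref{digest}, which ensures that the $g_k$-length of the transversal segment $\gamma$ approaching a point of $E\cap\tilde B_O$ stays finite and uniformly bounded despite $\varphi_k$ (equivalently $\tilde\varphi_{k,\epsilon}$) not being uniformly bounded in $L^\infty$ near $D$. Granting Proposition~\ref{arc}, the corollary itself is a routine triangle-inequality-plus-compactness argument; the only mild care needed is in checking that the connecting paths used to estimate $d_{g_k}(p,p')$ can be taken inside $X\setminus D$, which is automatic since $D$ has real codimension at least two and $X$ is connected.
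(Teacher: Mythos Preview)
Your proposal is correct and follows essentially the same approach as the paper. The paper's proof is a single sentence invoking Corollary~\ref{co44} and the line segment of Proposition~\ref{arc}: the estimate $(\tilde\pi)^*\omega_k\le C|\sigma_E|_{h_E}^{-2(1-\nu)}|w|^{-2\lambda}\tilde\omega$ makes the $g_k$-arclength of the whole transversal segment $\gamma$ uniformly finite (the exponent $1-\nu<1$ ensures integrability across $E$), and then one connects $\gamma(0)$ to $p$ inside $X\setminus D$ exactly as you do. You package the same idea via Proposition~\ref{arc}(3) with $\varepsilon=1$ plus smooth convergence on compact subsets of $X\setminus D$; this is just a different place to cut the path and yields the same bound. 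Your supplementary argument for uniqueness of $q_\infty$ (using Proposition~\ref{arc}(3) with $\varepsilon\to0$ and the identification of $\mathcal R_X$ with $X_\infty\setminus\mathcal S_X$) is more explicit than anything the paper writes, and is correct.
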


\begin{proof} The corollary immediately follows from Corollary \ref{co44} and the line segment chosen in Proposition \ref{arc}, after letting $\epsilon \rightarrow 0$.

\end{proof}

The following is the local freeness for the pluricanonical system on $X$.

\begin{proposition} \label{locfre6} For any $q\in X$, there exist $m\in \mathbb{Z}^+$ and $\sigma  \in H^0(X, mK_X)$ such that 
$$\sigma(q) \neq 0. $$

\end{proposition}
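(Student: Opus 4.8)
The plan is to deduce the global freeness statement Proposition~\ref{locfre6} from the local freeness on the limiting metric space (Proposition~\ref{inject}) together with the pointwise bounds for pluricanonical sections established in Section~2. First I would recall the dichotomy already set up in the excerpt: either $\mathrm{diam}_{g_k}(X)$ stays bounded, in which case the argument is easier and $X_\infty$ is compact and homeomorphic to a space containing all of $X$; or the diameters blow up, in which case one works on exhausting balls $B_\infty(r)$. In either case the key point, supplied by the corollary just before this proposition, is that any fixed $q\in X$ (including $q\in D$) converges in the Gromov--Hausdorff sense to a well-defined point $q_\infty \in X_\infty$, with $d_{g_k}(p,q)\le A_q$ uniformly.

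Next I would apply Proposition~\ref{inject} at the point $q_\infty$: there is $m\in\mathbb{Z}^+$ and $\sigma\in H^0(X_\infty, mK_{X_\infty}) = H^0(X, mK_X)$ (by the definition of the pluricanonical system on $X_\infty$) with $|\sigma|^2_{(h_\infty)^m}(q_\infty)\neq 0$. The issue flagged in the remark after Proposition~\ref{inject} is that this only says $|\sigma|_{(h_\infty)^m}$ does not vanish at $q_\infty$, not that $\sigma(q)\neq 0$ as a holomorphic section on $X$, because $h_\infty = h_\chi e^{-\varphi_{KE}}$ and $\varphi_{KE}$ may tend to $-\infty$ near $q$ (i.e. near $D$). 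So I would argue as follows. By Corollary~\ref{exten}, $|\sigma|^2_{(h_{KE})^m}$ extends to a Lipschitz (hence continuous) function on $(X_\infty, d_\infty)$. Since $q_k \to q_\infty$ and, by Lemma~\ref{locesk} and the modified pseudolocality in \cite{TW}, the metrics $g_k(1)$ converge smoothly near $q_k$ with the sections behaving continuously (Corollary~\ref{co22}, Corollary~\ref{co23} give the uniform $C^0$ and $C^1$ bounds on $|\sigma|^2_{h_t^m}$ along the flow), one obtains $|\sigma|^2_{(h_{KE})^m}(q_k) \to |\sigma|^2_{(h_{KE})^m}(q_\infty) \neq 0$. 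Hence for $k$ large $|\sigma|^2_{(h_{KE})^m}(q_k)\neq 0$, but each $q_k$ lies in $\mathcal{R}_X = X\setminus(\text{locus where jets fail to be generated})$ and in particular lies where $h_{KE}$ is a genuine smooth metric, so $\sigma(q_k)\neq 0$. This is not yet a statement about $q$ itself.

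To pass from $q_k$ to $q$ I would use the path $\gamma$ from Proposition~\ref{arc}: $\gamma(t)\in X\setminus(D\cup\{q\})$ for $t<1$, $\gamma(1)=q$, $\gamma$ transversal to $D$, and $d_{g_k}(\gamma(t),q)\to 0$ uniformly in $k$ as $t\to 1$. Along $\gamma$ the function $\varphi_{KE}$ is smooth off $D$; the transversality of $\gamma$ together with the estimate $\varphi_{KE}\ge \epsilon\log|\sigma_D|^2_{h_D}-C_\epsilon$ (Corollary~\ref{co21}, Corollary~\ref{co22}) and the upper bound for the metric in Corollary~\ref{co44}/Lemma~\ref{digest} controls the rate at which $\varphi_{KE}$ can decay near $q$. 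Combined with the uniform upper bound $|\sigma|^2_{h_\chi^m}e^{-m\varphi_{KE}} = |\sigma|^2_{(h_{KE})^m}\le C$ from Corollary~\ref{co22} and the gradient bound from Corollary~\ref{co23}, one can show that $|\sigma|^2_{(h_{KE})^m}$, as a function pulled back along $\tilde\pi$ to $\tilde B_O$, actually extends continuously across $E$ (the only exceptional divisor meeting the path) with a nonzero value at the point over $q$; equivalently, since $\sigma$ is a genuine holomorphic section of $mK_X$ on all of $X$ and its $h_{KE}$-norm is bounded above and does not vanish in the Gromov--Hausdorff limit, $\sigma(q)\neq 0$. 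Concretely: if $\sigma(q)=0$ then $|\sigma|^2_{h_\chi^m}$ vanishes at $q$ to some order, and using the arc-length bound for $\gamma$ with respect to $\omega_k$ together with the gradient estimate one would force $|\sigma|^2_{(h_{KE})^m}(q_\infty)=0$, a contradiction.

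I expect the main obstacle to be exactly this last step: controlling the Lipschitz extension of $|\sigma|^2_{(h_{KE})^m}$ across the divisor $D$ (or rather its strict transform / the exceptional $E\cong\mathbb{CP}^{n-1}$ in $\tilde B_O$) and verifying that its limiting value at $q_\infty$ is genuinely the value of $\sigma$ at $q$ rather than something that could have been inflated by the blow-up of $\varphi_{KE}$. This is where the careful choice of the resolution $\tilde\pi$, the estimate~(\ref{c261}) bounding $\tilde\omega_{k,\epsilon}$ on $\tilde B_O$, and the transversality in Proposition~\ref{arc} all have to be combined; the gradient bound (Corollary~\ref{co23}) together with the finite $\omega_k$-length of $\gamma$ is what makes $|\sigma|^2_{(h_{KE})^m}$ vary by a controlled amount along $\gamma$, so that non-vanishing at $q_\infty$ propagates back to non-vanishing at $q$. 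Everything else — invoking Proposition~\ref{inject}, identifying $H^0(X_\infty,mK_{X_\infty})$ with $H^0(X,mK_X)$, and the Gromov--Hausdorff convergence $q_k\to q_\infty$ — is already in place from the preceding sections.
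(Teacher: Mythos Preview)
Your overall strategy matches the paper's: invoke Proposition~\ref{inject} at the Gromov--Hausdorff limit point $q_\infty$, then transport the non-vanishing of $|\sigma|^2_{(h_{KE})^m}$ back along the transversal path $\gamma$ of Proposition~\ref{arc}. You also correctly single out the lower bound $\varphi_{KE}\ge \delta\log|\sigma_D|^2_{h_D}-C_\delta$ as the key input. However, the mechanism you propose for the last step is not the right one, and as written it does not close the argument.

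Your ``concretely'' sentence claims that if $\sigma(q)=0$, then the arc-length bound for $\gamma$ together with the gradient estimate would force $|\sigma|^2_{(h_{KE})^m}(q_\infty)=0$. This is false. The gradient bound (Corollary~\ref{co23}) makes $|\sigma|^2_{(h_{KE})^m}$ Lipschitz with respect to $d_\infty$, which only tells you that $|\sigma|^2_{(h_{KE})^m}(q_j)$ converges as $q_j\to q_\infty$; it says nothing about what the limit is. Writing $|\sigma|^2_{(h_{KE})^m}=|\sigma|^2_{h_\chi^m}\,e^{-m\varphi_{KE}}$, the first factor may tend to $0$ while the second tends to $+\infty$, and the product can perfectly well stay positive. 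So the gradient/arc-length route cannot produce the contradiction, and your ``equivalently'' is not an equivalence.

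The paper's argument uses the lower bound on $\varphi_{KE}$ directly, not the gradient estimate. From $|\sigma|^2_{(h_{KE})^m}(q_j)\ge 1/2$ one gets
\[
|\sigma|^2_{h_\chi^m}(q_j)\;\ge\;\tfrac12\,e^{\,m\varphi_{KE}(q_j)}\;\ge\;c_\delta\,|\sigma_D|^{2m\delta}_{h_D}(q_j)
\]
for every $\delta>0$. Along the transversal path $\gamma$, the holomorphic section $\sigma_D$ vanishes at $q$ to some fixed finite order; the inequality then says that the holomorphic section $\sigma$ vanishes at $q$ along $\gamma$ to order at most $m\delta$ times that fixed order. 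Letting $\delta\to 0$ forces the vanishing order of $\sigma$ at $q$ to be zero, i.e.\ $\sigma(q)\neq 0$. This order-of-vanishing comparison (holomorphicity of both $\sigma$ and $\sigma_D$, plus the arbitrariness of $\delta$) is the missing idea in your proposal; the gradient bound and the finite $\omega_k$-length of $\gamma$ play no role here beyond what already went into Corollary~\ref{exten} and Proposition~\ref{arc}.
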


\begin{proof} It suffices to prove the case when $q\in D$. Let $q_\infty\in X_\infty$ be the limiting point of $q$. By Proposition \ref{inject}, there exist $m>0$ and $\sigma \in H^0(X_\infty, m K_X)$ such that 
$$|\sigma|^2_{ (h_\infty)^m} (q_\infty)>1.$$ We now consider the sequence $q_j$ in the smooth path $\gamma(t)$  in Proposition  \ref{arc} such that  for all $k$,
$$d_{g_k}(q, q_j) < 1/j, ~ \lim_{j\rightarrow \infty} d_{g_0} (q_j, q) = 0. $$
Certainly $q_j\in (X, g_k)$ converges to the same point $q_j\in (X_\infty, d_\infty)$ as $k\rightarrow \infty$ by the smooth convergence of $g_k$ on $\mathcal{R}_X$ for each fixed $j$. Then 
$$d_\infty(q_j, q_\infty) = \lim_{k\rightarrow \infty} d_{g_k} (q_j, q ) < 1/j. $$
By continuity of $|\sigma|^2_{h_\infty^m}$ on $(X_\infty, d_\infty)$ from Lemma \ref{exten}, there exists $J>0$ such that for $j>J$, 
$$|\sigma|^2_{ (h_\infty)^m} (q_j) \geq 1/2 . $$
This implies that there exists $C>0$ such that for all $j>J$, 
$$ \varphi_\infty (q_j) \leq \log |\sigma|^2_{ (h_\chi)^m} (q_j) + C. $$
On the other hand, for any $\delta>0$, there exists $C_\delta>0$ such that 
$$\varphi_\infty \geq \delta \log |\sigma_D|^2_{h_D} - C_\delta. $$
Therefore for any $\delta>0$, there exists $C_\delta>0$ such that for all $j>J$, 
$$ \delta \log |\sigma_D|^2_{h_D} (q_j)  \leq \log |\sigma|^2_{h_\chi^m} (q_j) + C_\delta $$
or
$$|\sigma_D|^{2\delta}_{h_D} (q_j) \leq e^{C_\delta} |\sigma|^2_{h_\chi^m} (q_j).$$
Since $q_j$ lies in a smooth path transversal to $D$ as chosen in Proposition \ref{arc} and both $\sigma_D$ and $\sigma$ are holomorphic, this implies that $\sigma$ cannot vanish at $q$. This proves the proposition.

\end{proof}

We now can prove Theorem \ref{main1}.

\begin{theorem} Let $X$ be a smooth minimal model of general type. Then $m K_X$ is globally generated for some $m$ sufficiently large. 

\end{theorem}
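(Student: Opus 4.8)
The plan is to deduce the global generation of $mK_X$ from the local freeness results established throughout the paper, using a compactness argument together with the uniform estimates on the pluricanonical sections. First, recall that Proposition \ref{locfre6} shows that for \emph{each} point $q \in X$ there exist an integer $m_q \in \mathbb{Z}^+$ and a section $\sigma_q \in H^0(X, m_q K_X)$ with $\sigma_q(q) \neq 0$; equivalently, $m_q K_X$ is free at $q$. By upper semicontinuity (or simply openness of the non-vanishing locus of a holomorphic section), there is an open neighborhood $U_q$ of $q$ on which $\sigma_q$ is nowhere zero, so $m_q K_X$ is free at every point of $U_q$. Since $X$ is compact, finitely many such neighborhoods $U_{q_1}, \dots, U_{q_N}$ cover $X$, giving integers $m_{q_1}, \dots, m_{q_N}$ and sections $\sigma_{q_i} \in H^0(X, m_{q_i} K_X)$ whose non-vanishing loci cover $X$.

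Next I would pass to a common multiple. Set $m = \mathrm{lcm}(m_{q_1}, \dots, m_{q_N})$, or more simply $m = m_{q_1} m_{q_2} \cdots m_{q_N}$. For each $i$, let $\ell_i = m / m_{q_i} \in \mathbb{Z}^+$ and form $\tau_i = \sigma_{q_i}^{\otimes \ell_i} \in H^0(X, m K_X)$. Then $\tau_i(q) \neq 0$ whenever $\sigma_{q_i}(q) \neq 0$, so the non-vanishing loci of $\tau_1, \dots, \tau_N$ still cover $X$. Hence for every point $q \in X$ there is some $i$ with $\tau_i(q) \neq 0$, which is precisely the statement that $mK_X$ is globally generated (base point free). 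Since $K_X$ is big, standard facts then show $|m'K_X|$ is base point free for all sufficiently large $m'$ (e.g., $m' = m k$ for $k \geq 1$, using powers of the $\tau_i$, or by a further multiple argument to absorb all residues), so $K_X$ is semi-ample.

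The genuinely hard work is, of course, already contained in Proposition \ref{locfre6} and everything feeding it: the K\"ahler-Ricci flow estimates of Section 2, the identification $\mathcal{R} = \mathcal{R}_X$ and $g_{KE} = g_\infty$ in Section 3, the $L^2$-estimates of Section 4 via Demailly's theorem, the $H$-condition machinery of Section 5 yielding Proposition \ref{inject}, and the delicate metric-geometric control near $D$ in Section 6 (Lemma \ref{digest}, Proposition \ref{arc}) used to transfer non-vanishing from the limiting space $X_\infty$ back to points of $D \subset X$. Given all that, the final theorem is a soft compactness-plus-common-multiple argument; the only point requiring any care is that one must take a uniform $m$ working at \emph{every} point simultaneously, which the compactness of $X$ supplies. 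I therefore expect no real obstacle in this last step beyond bookkeeping with the exponents $\ell_i$.
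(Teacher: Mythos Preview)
Your proposal is correct and essentially identical to the paper's own proof: both invoke Proposition \ref{locfre6} to get local freeness at each point, use openness of the non-vanishing locus together with compactness of $X$ to pass to a finite cover, and then (implicitly in the paper, explicitly in your write-up) take a common multiple of the finitely many $m_{q_i}$ to obtain a single $m$ for which $mK_X$ is globally generated.
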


\begin{proof} By Proposition \ref{locfre6}, for any $q\in X$ there exist $m_q \in \mathbb{Z}^+$ and $\sigma_q \in H^0(X, m_q K_X)$ such that $\sigma_q$ does not vanish at $q$. Then there exists an open neighborhood $U_q$ of $q$ such that $\sigma_q$ does not vanish anywhere in $U_q$. The theorem is then proved by the finite covering theorem since $X$ is compact.

\end{proof}

As an application, we prove the $L^\infty$ estimate for $\varphi_\infty$ in K\"ahler-Einstein equation (\ref{keq}). 

\begin{corollary} $$\varphi_\infty\in L^\infty(X).$$

\end{corollary}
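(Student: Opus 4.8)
The plan is to deduce the global $L^\infty$-bound for $\varphi_\infty$ from the semi-ampleness of $K_X$ just established. Once Theorem \ref{main1} is known, for $m$ sufficiently large the linear system $|mK_X|$ is base point free, so a basis $\{\sigma_j\}_{j=0}^{d_m}$ of $H^0(X, mK_X)$ has empty common zero locus. First I would form the globally defined nonnegative $(n,n)$-form $\Omega_m = \left( \sum_{j=0}^{d_m} |\sigma_j|^2_{h_\chi^m} \right)^{1/m} \Omega$; since the $\sigma_j$ have no common zero and each $|\sigma_j|^2_{h_\chi^m}$ is a smooth nonnegative function on the compact manifold $X$, the function $\sum_j |\sigma_j|^2_{h_\chi^m}$ is smooth, strictly positive, and hence bounded above and below by positive constants on $X$. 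Therefore $\Omega_m$ is comparable to $\Omega$: there exists $C>0$ with $C^{-1}\Omega \le \Omega_m \le C\Omega$ on $X$.

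Next I would combine the two one-sided bounds already in hand. From Lemma \ref{prop21}(1) and Corollary \ref{co21} we have the uniform upper bound $\varphi_\infty \le C$ on $X$. For the lower bound, Corollary \ref{co22} gives, for each $j$, a constant $C_j>0$ with $\varphi_\infty \ge m^{-1}\log |\sigma_j|^2_{h_\chi^m} - C_j$ on $X$. Since this holds for every $j$ simultaneously, we may take the maximum over $j$:
\begin{equation*}
\varphi_\infty \ge \max_{0\le j\le d_m} \left( m^{-1}\log |\sigma_j|^2_{h_\chi^m} \right) - C \ge m^{-1}\log\left( \frac{1}{d_m+1}\sum_{j=0}^{d_m} |\sigma_j|^2_{h_\chi^m} \right) - C.
\end{equation*}
Because $\sum_j |\sigma_j|^2_{h_\chi^m}$ is bounded below by a positive constant on $X$ (no common zeros, compactness), the right-hand side is bounded below by a constant $-C'$. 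Hence $\varphi_\infty \ge -C'$ on $X$, and together with the upper bound this yields $\varphi_\infty \in L^\infty(X)$.

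I do not anticipate any serious obstacle here: the entire content is the base point freeness of $mK_X$, which is exactly Theorem \ref{main1}, together with the already-established pointwise estimates of Corollary \ref{co22} (equivalently Lemma \ref{lm24} passed to the limit). The only minor point requiring care is that Corollary \ref{co22} produces a constant $C_j$ depending on $\sigma_j$ and $m$; this is harmless since $m$ and the basis $\{\sigma_j\}$ are fixed once and for all by Theorem \ref{main1}, and there are only finitely many $\sigma_j$, so $C = \max_j C_j$ is a legitimate finite constant. One could alternatively phrase the argument directly in terms of $h_\infty$: the estimate $\sup_X |\sigma_j|^2_{(h_\infty)^m} < \infty$ for all $j$ together with $\inf_X \sum_j |\sigma_j|^2_{h_\chi^m} > 0$ forces $e^{\varphi_\infty}$ to be bounded below, which is the lower bound, and the upper bound on $\varphi_\infty$ was already known. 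Either way the proof is short.
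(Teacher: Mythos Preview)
Your argument is correct and is precisely the intended one: the paper states the corollary without proof, as an immediate consequence of Theorem~\ref{main1} together with Corollary~\ref{co22} and the upper bound from Lemma~\ref{prop21}(1), which is exactly what you have written out.
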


\section{Generalizations and discussions}

\noindent{\it  7.1. Freeness for big and nef line bundles on Calabi-Yau manifolds.} Using the same argument for Theorem \ref{main1}, we can also prove Theorem \ref{main2} for the semi-ampleness of a nef and big line bundle $L$ over a Calabi-Yau manifold $X$ of dimension $n$. In fact, the proof is simpler, because one can obtain a uniform diameter bound for the approximating Ricci-flat K\"ahler metrics. We lay out the sketch of the proof and leave the details for the readers to check.  

Let $\Omega$ be the smooth volume form on $X$ such that $ \ddbar \log \Omega =0$ and $\int_X \Omega = \left( c_1(L) \right)^n.$ Let $\chi\in c_1(L)$ be a smooth closed $(1,1)$-form and $\omega_0$ a smooth K\"ahler form on $X$. We then consider the following family of Monge-Ampere equations for $k\in \mathbb{Z}^+$. 
\begin{equation}
(\chi + e^{-k} \omega_0 + \ddbar \varphi_k)^n = e^{c_k} \Omega, ~ \sup_X \varphi_k =0. 
\end{equation}
Then $Ric(\omega_k)=0$, $\lim_{k\rightarrow\infty} c_k = 0$ and $diam_{g_k}(X)$ is uniformly bounded. We can adapt the arguments in previous sections  as well as the argument in Section 3 of \cite{S3} to prove Theorem \ref{main2}.

\bigskip

\noindent{\it 7.2. Kawamata's base-point-free theorem.} Using the constructions of  conical K\"ahler-Einstein metrics, one should also be able to prove the following base-point-free theorem of Kawamata \cite{K1, K2} with an additional assumption on the bigness of the divisor $D$.

\begin{theorem} Let $X$ be a projective manifold. If $D$ is a big and nef divisor such that $a D -K_X$ is big and nef for some $a>0$, then $D$ is semi-ample. 

\end{theorem}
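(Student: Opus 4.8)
The plan is to run the argument of Theorem~\ref{main1} for the log pair obtained by absorbing $aD-K_X$ into a boundary divisor. First I would reduce to a klt pair: since $a'D-K_X$ is big and nef whenever $a'\geq a$, we may assume $a\in\mathbb{Z}^+$; then by Kodaira's lemma together with a log resolution one can choose an effective $\mathbb{Q}$-divisor $\Delta\sim_{\mathbb{Q}}aD-K_X$ with simple normal crossings support and coefficients in $(0,1)$ --- indeed arbitrarily small, since $aD-K_X$ is big --- so that $(X,\Delta)$ is klt and $K_X+\Delta\sim_{\mathbb{Q}}aD$ is big and nef. Since $D$ is semi-ample if and only if $maD$ is globally generated for some $m\in\mathbb{Z}^+$, and $maD\sim m(K_X+\Delta)$ for $m$ sufficiently divisible, it suffices to prove the base point free theorem for the log canonical $\mathbb{Q}$-bundle $K_X+\Delta$.

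Next I would replace, throughout Sections 2--6, the canonical bundle $K_X$ and its K\"ahler--Einstein metric by $K_X+\Delta$ and the conical K\"ahler--Einstein current $\omega_{KE}\in c_1(aD)$ with $\mathrm{Ric}(\omega_{KE})=-\omega_{KE}+2\pi[\Delta]$, which is smooth and genuinely K\"ahler--Einstein on $X\setminus(\mathrm{Supp}\,\Delta\cup D')$, where $aD-\epsilon D'$ is ample for small $\epsilon>0$, and which has the standard conical asymptotics along $\Delta_{reg}$. Running the Monge--Amp\`ere flow (\ref{maflow}) and the elliptic family (\ref{maeq}) adapted to $(X,\Delta)$ --- that is, with $\Omega$ replaced by a measure having poles of the prescribed fractional order along $\Delta$ --- one obtains this current and its smooth $\epsilon$-regularizations $g_k$, together with the analogues of Lemmas~\ref{prop21}--\ref{lm26}: the potential bounds (now with an extra $\log|\sigma_\Delta|^2$ term in the lower estimate), the Tsuji-type Laplacian bound, local higher order regularity away from $\mathrm{Supp}\,\Delta\cup D'$, and the pointwise estimates of Corollaries~\ref{co22}--\ref{co23} for sections of $m(K_X+\Delta)$. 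The $L^2$-machinery then goes through: Demailly's Theorem~\ref{demailly} applied with $m(K_X+\Delta)=(m-1)(K_X+\Delta)+(K_X+\Delta)$ and the hermitian metric of curvature $\omega_{KE}$, using the cut-off functions of Lemma~\ref{app} (supported away from $\mathrm{Supp}\,\Delta\cup D'$), yields Proposition~\ref{lll3} and hence Lemma~\ref{ll31} for $H^0(X_\infty,m(K_X+\Delta)_{X_\infty})$; and the Donaldson--Sun $H$-condition argument of Section 5 --- using, at limit points lying over $\Delta$, the explicit cone model $\mathbb{C}^{n-1}\times C_{2\pi\beta}$ for the tangent cone as in \cite{DS} --- produces for every $q_\infty\in X_\infty$ a section $\sigma\in H^0(X_\infty,m(K_X+\Delta)_{X_\infty})$ with $|\sigma|^2_{(h_{KE})^m}(q_\infty)\neq 0$. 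Finally the global freeness argument of Section 6 (a log resolution $\tilde\pi\colon\tilde X\to X$ of $D\cup\mathrm{Supp}\,\Delta$, the family (\ref{4365}), the estimates (\ref{c06})--(\ref{c261}) via Tsuji's trick, and the transversal path of Proposition~\ref{arc}) shows every $q\in X$ has a well-defined Gromov--Hausdorff limit $q_\infty$, and combining $|\sigma|^2_{(h_{KE})^m}(q_\infty)\neq 0$ with the lower bound $\varphi_{KE}\geq\delta\log|\sigma_{D'}|^2_{h_{D'}}+\delta\log|\sigma_\Delta|^2_{h_\Delta}-C_\delta$ and the transversality of the path to $D'\cup\mathrm{Supp}\,\Delta$ forces $\sigma(q)\neq 0$ as a section of $maD$. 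A finite covering argument then gives that $maD$ is globally generated, hence $D$ is semi-ample.

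The step I expect to be the main obstacle is the input from Section 3: the regularity theory for conical K\"ahler--Einstein metrics and the Cheeger--Colding/Tian--Wang degeneration theory in the presence of a codimension-one conical locus. Unlike the smooth case, the $L^1$ scalar curvature defect $\int_X|R(g_k(t))+n|\,dV_{g_k(t)}$ does not tend to $0$ --- it is controlled by $\sum_i(1-\beta_i)[\Delta_i]\cdot[aD]^{n-1}$ --- so Lemma~\ref{locesk}(3) must be replaced by its conical analogue, and one must verify the corresponding almost conical K\"ahler--Einstein conditions for the regularizations $g_k$ (in particular a uniform Ricci lower bound for the smoothings and $\epsilon$-regularity at the conical scale). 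This is precisely why the statement carries the extra hypothesis that $aD-K_X$ be big: it allows $\Delta$, and hence the cone angles, to be taken close to the smooth situation. The one further ingredient requiring care, though essentially routine, is the algebro-geometric reduction in the first paragraph --- arranging $(X,\Delta)$ to be klt with simple normal crossings support by a log resolution, and checking that semi-ampleness of $D$ is unaffected by passing to $m(K_X+\Delta)$.
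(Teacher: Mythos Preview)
The paper does not actually prove this theorem. It appears in Section~7.2 as a statement the author \emph{believes} can be handled by the same scheme, followed only by the remark that one should use a K\"ahler--Einstein current $\omega_{KE}\in c_1(aD)$ satisfying $Ric(\omega_{KE})=-\omega_{KE}+\eta$ for a nonnegative current $\eta\in c_1(aD-K_X)$ vanishing on a Zariski open set, together with ``the constructions of conical K\"ahler-Einstein metrics''; the author then explicitly writes ``We will leave the more detailed discussion in our future work.'' So there is no proof in the paper to compare against, only a one-paragraph indication of strategy.

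Your sketch is a faithful and considerably more detailed elaboration of exactly that strategy: your choice $\Delta\sim_{\mathbb Q} aD-K_X$ with small SNC coefficients realizes the paper's current $\eta$ as $2\pi[\Delta]$, and the conical K\"ahler--Einstein equation $Ric(\omega_{KE})=-\omega_{KE}+2\pi[\Delta]$ is precisely the equation the paper writes down. Your section-by-section adaptation of the machinery (Sections~2--6 with the extra $\log|\sigma_\Delta|^2$ terms, Demailly's $L^2$-estimate for $m(K_X+\Delta)$, the Donaldson--Sun $H$-condition with cone-model tangent cones, and the transversal-path argument) is the natural way to carry this out, and goes well beyond what the paper says. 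You have also correctly located the genuine obstruction the paper passes over in silence: Lemma~\ref{locesk}(3) fails as stated in the conical setting because the $L^1$ scalar-curvature defect is governed by $\sum_i(1-\beta_i)[\Delta_i]\cdot[aD]^{n-1}$ rather than tending to zero, so the Tian--Wang almost-Einstein input must be replaced by a conical analogue. That is exactly the step the author defers to future work, and your observation that the bigness of $aD-K_X$ lets one take the cone angles close to $2\pi$ is the right reason to expect it to go through. In short, your proposal matches the paper's intended approach and is more explicit about both the argument and its gaps; the only caveat is that, like the paper, it is a program rather than a proof until the conical degeneration theory is supplied.
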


The assumption for $D$ being big is the noncollapsing condition to guarantee the existence of K\"ahler-Einstein current $\omega_{KE}\in c_1(a D)$ satisfying
$$Ric(\omega_{KE}) = - \omega_{KE} + \eta,$$
where $\eta\geq 0$ is a nonnegative current in $c_1(aD- K_X)$ on $X$ and it vanishes on a Zaraski open dense subset of $X$.
We will leave the more detailed discussion in our future work.

\bigskip

\noindent{\it 7.3. Toward finite generation of canonical rings and abundance conjecture.} We believe that our approach can also be applied to understand the finite generation of canonical rings, which is proved in \cite{BCHM} and \cite{Siu, Siu2}. The canonical K\"ahler-Einstein current on $X$ is constructed in \cite{S3, BEGZ} and we hope that  the scheme in our paper and in \cite{ST2} can lead to  an analytic and Riemannian geometric proof for finite generation of canonical rings for projective manifolds of general type. Another approach is the K\"ahler-Ricci flow through singularities as developed in \cite{ST3} and this should lead to deeper understanding for analytic and geometric aspects of singularities and flips in the minimal model program.

\bigskip
\bigskip


\bigskip

\footnotesize

\end{document}